\theoremstyle{plain}                    
\newtheorem{thm}{Theorem}[section]
\newtheorem{prop}[thm]{Proposition}                          
\newtheorem{cor}[thm]{Corollary}
\newtheorem{lem}[thm]{Lemma}
\newtheorem{conj}[thm]{Conjecture}
\theoremstyle{definition}
\newtheorem{defn}[thm]{Definition}
\newtheorem{rem}[thm]{Remark}
\newtheorem{exa}[thm]{Example}
\newcommand{\rinto}{\hookrightarrow}
\newcommand{\ronto}{\twoheadrightarrow}
\newcommand{\df}{\displaystyle\frac}
\newcommand{\seq}[1]{\left<#1\right>}   
\newcommand{\F}{\mathcal{F}}
\newcommand{\End}{\operatorname{End}}
\newcommand{\Hom}{\operatorname{Hom}}
\newcommand{\N}{\mathcal{N}}
\newcommand{\D}{\mathfrak{D}}
\newcommand{\CC}{\mathbb{C}}
\newcommand{\ZZ}{\mathbb{Z}}
\newcommand{\Fq}{\mathbb{F}_q}
\newcommand{\q}{\mathbf{q}}
\newcommand{\HH}{\mathcal{H}}
\newcommand{\gl}{\operatorname{GL}}
\newcommand{\pp}{\mathfrak{p}}
\newcommand{\Ind}{\operatorname{Ind}}
\newcommand{\oo}{\emptyset}
\newcommand{\R}{\mathcal{R}}
\newcommand{\card}{\operatorname{card}}
\newcommand{\T}{\mathcal{T}}
\newcommand{\Res}{\operatorname{Res}}
\newcommand{\ull}{\underline{\lambda}}
\newcommand{\umu}{\underline{\mu}}
\newcommand{\unu}{\underline{\nu}}
\newcommand{\II}{\mathcal{I}}
\newcommand{\ee}{\varepsilon}
\newcommand{\Mod}{\operatorname{Mod}}
\newcommand{\K}{\mathcal{K}}
\newcommand{\G}{\mathcal{G}}
\newcommand{\ttl}{\mathfrak{t}^{\ull}}
\newcommand{\kk}{\mathbbm{k}}
\newcommand{\st}{\mathfrak{s}}
\newcommand{\tu}{\mathfrak{t}}
\newcommand{\qq}{\mathfrak{q}}
\begin{document}
\title{The Mirabolic Hecke Algebra}
\author{Daniele Rosso}
\address{\newline
Daniele Rosso \newline 
University of Ottawa \newline 
Department of Mathematics and Statistics\newline
585 King Edward, Ottawa, ON, K1N 6N5, Canada }

\email{drosso@uottawa.ca}
\date{\today}
\begin{abstract}The Iwahori-Hecke algebra of the symmetric group is the convolution algebra of $\gl_n$-invariant functions on the variety of pairs of complete flags over a finite field. Considering convolution on the space of triples of two flags and a vector we obtain the mirabolic Hecke algebra $R_n$, which had originally been described by Solomon. In this paper we give a new presentation for $R_n$, which shows that it is a quotient of a cyclotomic Hecke algebra as defined by Ariki and Koike. From this we recover the results of Siegel about the representations of $R_n$. We use Jucys-Murphy elements to describe the center of $R_n$ and to give a $\mathfrak{gl}_\infty$-structure on the Grothendieck group of the category of its representations, giving `mirabolic' analogues of classical results about the Iwahori-Hecke algebra. We also outline a strategy towards a proof of the conjecture that the mirabolic Hecke algebra is a cellular algebra.\end{abstract} 
\maketitle
\section{Introduction}
\subsection{} The Iwahori-Hecke algebra $H_n$ of the symmetric group $S_n$ is an example of a convolution algebra. The basic setting for convolution is the following: we have a finite set $X$ and we take $E=\CC(X\times X)$ to be the vector space of all complex valued functions on $X\times X$. Then given two functions $f,g$ we define their convolution to be 
\begin{equation}\label{conv1}(f*g)(x,y)=\sum_{z\in X}f(x,z)g(z,y).\end{equation} 
This defines an associative product on $E$. 
If $G$ is a group acting on $X$, then we have the diagonal action of $G$ on $X\times X$ which induces a $G$-action on $E$. We can then consider the algebra $E^G\simeq\CC(G\backslash X\times X)$ of functions that are invariant under the group action, with the same convolution product of \eqref{conv1}. Let $G=\gl_n(\Fq)$, and $B$ the subgroup of upper triangular matrices. Then we can take $X=G/B$, the space of all complete flags in $\Fq^n$. From the Bruhat decomposition, it follows that the $G$-orbits on $G/B\times G/B$ are parametrized by the symmetric group $S_n$. As we will discuss in Section \ref{iwh}, for $X=G/B$, the resulting convolution algebra is the Iwahori-Hecke algebra of $S_n$.

The action of $G$ on the space $G/B\times G/B\times \Fq^n$ still has finitely many orbits. This is a special case of the result of Magyar, Weyman and Zelevinsky, which in \cite{MWZ} have classified all cases in which the $G$ action on triples of flags has finitely many orbits. We can then define a convolution product on the space of invariant functions $\CC(G/B\times G/B\times\Fq^n)^G$. We call the resulting algebra $R_n$ the \emph{mirabolic Hecke algebra}.

\subsection{} For a field $\kk$, the action of $\gl_n(\kk)$ on varieties of flags and pairs of flags is a classical topic of study. Recently, generalizations of these constructions have been appearing, including the extra data of a vector or a line. This is what we mean by the `mirabolic' setting. The name comes from the mirabolic subgroup $P\subset \gl_n(\kk)$, which is the subgroup that fixes a nonzero vector in $V=\kk^n$. This is because in general, for a $G$-variety $X$, the $P$-orbits on $X$ are obviously in a 1-1 correspondence with $G$-orbits on $X\times (V\setminus\{0\})$.
 
One case in which such a generalization arises is the following. If $G/B$ is the variety of complete flags in $V$, then it is interesting to study the action of $G$ on $G/B\times G/B\times V$. One reason why this is important is because $\D$-modules on $G/B\times G/B\times V$ are closely related to mirabolic character $\D$-modules. These are certain $\D$-modules on $G\times V$, which arise when studying the spherical trigonometric Cherednik algebra, see \cite{FG}.  

Another example is the work of Achar and Henderson (\cite{AH}) extending the nilpotent cone $\N\subset\End(V)$ to the `enhanced nilpotent cone' $\N\times V$. Here the group $G$ acts on $V$ in the obvious way and on $\N$ by conjugation. 
Their motivation was the work of S.Kato (\cite{K}), which had introduced the `exotic nilpotent cone', of which $\N\times V$ is a simplification. Kato uses the exotic nilpotent cone to establish an `exotic' Springer correspondence and to give a geometric construction of the affine Hecke algebra of type $C^{(1)}_n$. 


\subsection{} The focus of the present paper is the `mirabolic' Hecke algebra $R_n$. This had originally been defined, in different terms, by Solomon in \cite{So}, and its irreducible representations have been described in \cite{Si}. The structure of the same space, as an $H_n$-bimodule, has been studied by Travkin in \cite{T}. 

Most of the results contained here follow from the observation that $R_n$ is a quotient of a cyclotomic Hecke algebra, as defined by Ariki and Koike. This is proved in Section \ref{cycloha} by giving a new presentation for $R_n$. It is similar to the main result of \cite{HR} about the $q$-rook algebra. We discuss these similarities in Section \ref{qrook}. The rest of the paper is organised as follows. In Sections \ref{iwh} and \ref{mirha} we recall results about the Iwahori-Hecke algebra of $S_n$, and we introduce $R_n$. In Section \ref{jme} we discuss analogues of the Jucys-Murphy elements. We use them to describe the center of $R_n$ and to give the structure of a $\mathfrak{gl}_\infty$-representation on the Grothendieck groups of the categories of $R_n$-modules. We conclude in Section \ref{cellstruct} by conjecturing that $R_n$ is a cellular algebra and giving a strategy towards the proof. This is carried out for the case of $R_2$.
\subsection*{Acknowledgments} The author would like to thank Victor Ginzburg for suggesting the line of inquiry that led to this work and for his help and advice. He thanks the people with whom he had useful conversations about this subject, in particular Sam Evens, Eric Vasserot and Jonathan Sun. Thanks also go to Emily Norton who was extremely helpful by pointing out the reference \cite{HR}. Finally, he is grateful to the University of Chicago and the University of Ottawa for support.
\section{Iwahori-Hecke Algebra of $S_n$}\label{iwh}
The symmetric group $S_n$ is a Coxeter group, with simple reflections being the adjacent transpositions $s_i=(i\, i+1)$ for $i=1,\ldots,n-1$. We define the corresponding \emph{Iwahori-Hecke Algebra} $H_n$ as follows.
\begin{defn}$H_n$ is the $\CC[\q,\q^{-1}]$-algebra, with generators $T_i$, $i=1,\ldots,n-1$ satisfying the following relations:
\begin{align}
T_{i}^2  &=(\q-1)T_{i}+\q & i=1,\ldots,n-1 & & \notag \\
T_{i}T_{i+1}T_{i}& =T_{i+1}T_{i}T_{i+1} & i=1,\ldots,n-2 & & \label{relq}\\
T_{i}T_{j}& =T_{j}T_{i} & |i-j|\geq 2 & & \notag
\end{align}
\end{defn}
It is well known that $H_n$ is a free $\CC[\q,\q^{-1}]$-module of rank $n!$ with a basis is given by $\{T_w | w\in S_n\}$. Here $T_w=T_{i_1}\cdots T_{i_r}$, where $w=s_{i_1}\cdots s_{i_r}$ is a reducd expression of $w$. The elements $T_w$ are well defined because of the braid relations in $\eqref{relq}$.
\begin{defn}If $q\in\CC^{\times}$ is a nonzero complex number, we define the specialization 
$$\HH_n(q):=H_n/(\q-q)$$
which is an algebra over $\CC$. We also define the generic Hecke algebra by extending scalars to the field of fractions
$$\HH_n:=H_n\otimes_{\CC[\q,\q^{-1}]}\CC(\q).$$
\end{defn}
When $q$ is taken to be a power of a prime, we can realize $\HH_n(q)$ as a convolution algebra, as follows.
Let $\Fq$ be the finite field with $q$ elements, and $G=\gl_n(\Fq)$ the general linear group. We take $B\leq G$ to be the Borel subgroup of upper trangular matrices, then $G/B$ is canonically identified with the variety of complete flags in $V=\Fq^d$. We let $E=\CC(G/B\times G/B)$ be the vector space of all complex valued functions on $G/B\times G/B$. 
\begin{defn} Given two functions $f,g\in E$ we define their \emph{convolution} to be 
\begin{equation}\label{convol}f*g(F,F')=\sum_{H\in G/B}f(F,H)g(H,F').\end{equation}\end{defn}
This is an associative product, with identity element the characteristic function of the diagonal. Now, $G$ acts on $G/B$ by left multiplications and it acts diagonally on $G/B\times G/B$, hence it acts on $E$. It is easily checked that the convolution product  descends to a well defined product on $E^G\simeq\CC(G\backslash G/B\times G/B)$. This is the space of functions that are invariant under the diagonal group action or equivalently the space of functions that are constant on the $G$-orbits. By the Bruhat decomposition, the orbits in this case are given by elements of the symmetric group. This can be seen as follows. If we have two flags 
$$F=(0=F_0\subset\ldots\subset F_{n-1}\subset F_n=V);\quad F'=(0=F'_0\subset\ldots\subset F'_{n-1}\subset F'_n=V)$$
we define their \emph{relative position} to be the $n\times n$ permutation matrix $w=w(F,F')$ with entries given by:
\begin{equation}\label{rpos}w(F,F')_{ij}=\dim\left(\frac{F_i\cap F'_j}{F_i\cap F'_{j-1}+F_{i-1}\cap F'_j}\right).\end{equation}

Let $\mathbf{T}_w$  be the characteristic function of the $G$-orbit of all pairs of flags in relative position $w$, in particular $\mathbf{T}_e$ is the identity in $E^G$.
\begin{prop}\label{isoconv} The map $\mathbf{T}_w\mapsto T_w$ gives an algebra isomorphism $E^G\simeq\HH_n(q).$ \end{prop}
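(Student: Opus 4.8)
The plan is to verify that the characteristic functions $\mathbf{T}_{s_i}$ of the orbits indexed by simple reflections satisfy the defining relations of $\HH_n(q)$, and then to use a dimension count together with a spanning argument to conclude that the induced map is an isomorphism. First I would record the standard fact that, for the simple reflection $s_i$, a pair of flags $(F,F')$ is in relative position $w$ either with $w(F,F')=e$ (when $F=F'$), with $w=s_i$, or with $\ell(s_iw)=\ell(w)+1$; this is the geometric shadow of the Bruhat order. Concretely, fixing a flag $F'$, the set of flags $H$ that differ from $F'$ only in the $i$-th step (i.e.\ $w(F',H)\in\{e,s_i\}$) forms a $\CP^1(\Fq)$, which is what produces the eigenvalue $q$ and the quadratic relation.

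Next I would carry out the key computation $\mathbf{T}_{s_i}*\mathbf{T}_{s_i}=(q-1)\mathbf{T}_{s_i}+q\,\mathbf{T}_e$. Unwinding the convolution \eqref{convol}, $(\mathbf{T}_{s_i}*\mathbf{T}_{s_i})(F,F')$ counts flags $H$ with $w(F,H)=w(H,F')=s_i$. If $F=F'$, every $H$ in the $\CP^1$ through the $i$-th step of $F$ except $F$ itself works, giving $q$; if $w(F,F')=s_i$, one counts the $\Fq$-points of an affine line, giving $q-1$; and if $\ell(s_iw(F,F'))=\ell(w(F,F'))+1$ the count is zero. This establishes the quadratic relation. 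For the braid relations, I would use the general principle that multiplication of characteristic functions of $B$-orbits is governed by the Bruhat decomposition: $\mathbf{T}_{s_i}*\mathbf{T}_w=\mathbf{T}_{s_iw}$ whenever $\ell(s_iw)=\ell(w)+1$. Granting this, both $\mathbf{T}_{s_i}*\mathbf{T}_{s_{i+1}}*\mathbf{T}_{s_i}$ and $\mathbf{T}_{s_{i+1}}*\mathbf{T}_{s_i}*\mathbf{T}_{s_{i+1}}$ equal $\mathbf{T}_{s_is_{i+1}s_i}$, since $s_is_{i+1}s_i$ is reduced of length $3$; the far commutation relation is immediate from disjointness of the relevant flag conditions. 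It also follows that $\mathbf{T}_w=\mathbf{T}_{s_{i_1}}*\cdots*\mathbf{T}_{s_{i_r}}$ for any reduced word, so the elements $\mathbf{T}_w$ are well defined and the assignment $T_w\mapsto\mathbf{T}_w$ is a well-defined algebra homomorphism $\HH_n(q)\to E^G$.

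Finally, surjectivity follows because the $\{\mathbf{T}_w\mid w\in S_n\}$ visibly span $E^G$: by the Bruhat decomposition the $G$-orbits on $G/B\times G/B$ are exactly the sets of pairs of fixed relative position $w$, so these characteristic functions form a $\CC$-basis of $E^G$. Since $\HH_n(q)$ has dimension $n!=|S_n|=\dim E^G$ and the map sends the basis $\{T_w\}$ onto the basis $\{\mathbf{T}_w\}$, it is an isomorphism.

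I expect the main obstacle to be a clean justification of the multiplicative rule $\mathbf{T}_{s_i}*\mathbf{T}_w=\mathbf{T}_{s_iw}$ when $\ell(s_iw)=\ell(w)+1$, i.e.\ showing that for such $w$ and for a pair $(F,F')$ in relative position $s_iw$ there is a \emph{unique} intermediate flag $H$ with $w(F,H)=s_i$ and $w(H,F')=w$; this, and the parallel counting argument in the non-reduced case, is where the geometry of flag varieties over $\Fq$ actually enters, and it is the technical heart of the proposition. Everything else is bookkeeping with the Bruhat decomposition and a dimension count.
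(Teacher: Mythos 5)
Your proposal is correct and follows essentially the same route as the paper, which simply notes that one checks the relations \eqref{relq} for the elements $\mathbf{T}_{s_i}$ and concludes by a dimension count; you have supplied the point-counting details (the $\mathbb{P}^1(\Fq)$ fibre argument and the rule $\mathbf{T}_{s_i}*\mathbf{T}_w=\mathbf{T}_{s_iw}$ for $\ell(s_iw)=\ell(w)+1$) that the paper leaves as ``a straightforward computation.'' One tiny imprecision: your opening trichotomy omits relative positions $w\neq s_i$ with $\ell(s_iw)=\ell(w)-1$, but your support observation (any $H$ with $w(F,H)=w(H,F')=s_i$ forces $w(F,F')\in\{e,s_i\}$) disposes of that case anyway, so the argument stands.
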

\begin{proof} Since the algebras have the same dimensions, the proposition is proved by checking that the elements $\mathbf{T}_{s_i}$ satisfy the same relations as the $T_i$ in \eqref{relq}. It is a straightforward computation. 


\end{proof}
\begin{rem}The convolution construction is basically the same as defining the algebra of double cosets $B\backslash G/B$, given the identification of the orbit spaces $$G\backslash (G/B\times G/B)\simeq B\backslash G/B.$$ This is completely analogous to the discussion in Section \ref{solom}.\end{rem}
\subsection{Combinatorics and Seminormal representations}\label{seminorm}
A \emph{partition} $\lambda$ of the nonnegative integer $n$ (denoted by $\lambda\vdash n$ or by $|\lambda|=n$) is a nonincreasing sequence of nonnegative integers $\lambda=(\lambda_1,\ldots,\lambda_k)$ such that $\lambda_1\geq\ldots\geq\lambda_k$ and $\lambda_1+\ldots+\lambda_k=n$.
Partitions can be thought of as Young diagrams, consisting of $\lambda_i$ left justified boxes in the $i$-th row from the top.
A Young diagram filled with positive integers is called a \emph{Standard tableau} if all the numbers $1,2,\ldots,d$ are used and they are increasing along the rows and down the columns of the diagram. The partition giving the diagram will be called the \emph{shape} of the tableau.
\begin{exa}\label{stdtbl}A Standard tableau of shape $(4,2,2,1)$ is the following.
$$\young(1358,27,49,6)$$
\end{exa}
For a fixed partition $\lambda$, we denote the set of all standard tableaux of shape $\lambda$ by $\T(\lambda)$.
 \begin{defn}\label{content}Given a tableau $T$, and an integer $1\leq i\leq n$, such that the box containing $i$ is found in the $l$-th row and $m$-th column of $T$, we define the \emph{content} $c_T(i)=m-l$. \end{defn}
\begin{rem}The content identifies which diagonal of the tableau the number $i$ is in. If we let $T$ be the standard tableau of Example \ref{stdtbl}, then we have the sequence of contents
$$ (c_T(1),c_T(2),\ldots, c_T(9))=(0,-1,1,-2,2,-3,0,3,-1)$$
\end{rem}
The Hecke algebra is semisimple for generic $\q$, more precisely when $\q$ is not specialized to a root of unity. When this is the case, the irreducible representations of $\HH_n$ are parametrized by partitions of $n$. One way of constructing them, which was  originally done in \cite{Ho}, is as follows (this exposition is more similar to \cite[Section 3]{Ra}).
Let $\theta\vdash n$, then we have the corresponding irreducible representation 
$$V^\theta:=\CC(\q)\{v_T|T\in\T(\theta)\}$$
with action of the generators given by
\begin{equation*}T_i v_T=\left\{\begin{array}{cl} \q v_T & \text{ if $i,i+1$ are on the same row of $T$} \\
-v_T & \text{ if $i,i+1$ are on the same column of $T$}\\
\frac{\q-1}{1-\q^k}v_T+\left(1+\frac{\q-1}{1-\q^k}\right)v_{s_i(T)} & \text{otherwise}
\end{array}\right.
\end{equation*}
where $s_i(T)$ is the standard tableau obtained from $T$ by exchanging $i$ and $i+1$, and $k=c_T(i)-c_T(i+1)$.
\section{Mirabolic Hecke Algebra}\label{mirha}
We can now introduce the main object of study of this paper. For this section we will use the same notation as in \cite{T}.  As in Section \ref{iwh}, we let $G=\gl_d(\Fq)$, $V=\Fq^d$ and we consider $G$- diagonal orbits on $G/B\times G/B\times V$. These are indexed by pairs $(w,\beta)$ with $w\in S_n$, and $\beta\subset\{1,\ldots,n\}$ satisfying the condition that if $i\in\beta$, $j\not\in\beta$, then $i<j$ or $w(i)<w(j)$. 
\begin{rem}The orbit corresponding to $(w,\beta)$ consists of the triples $(F,F',v)$ such that there exists a basis $\{e_i|i=1,\ldots,d\}$ of $V$ with
$$F_i=  \seq{e_1,\ldots,e_i};\quad F'_j=  \seq{e_{w(1)},\ldots,e_{w(j)}};\quad v=\sum_{i\in\beta}e_i.$$
\end{rem} 

Let $\R_n(q)=\CC(G/B\times G/B\times V)^G\simeq\CC(G\backslash (G/B\times G/B\times V))$ be the space of $G$ invariant functions. Travkin in \cite{T} has studied this space as an $\HH_n(q)$-bimodule. The action is given by convolution: if we realize  $\HH_n(q)$ as the convolution algebra of Proposition \ref{isoconv}, for $\alpha\in\HH_n(q)$ and $\beta\in \R_n(q)$ we can define 
\begin{equation}\label{bimod} \alpha*\beta(F,F',v)=\sum_{H\in\F}\alpha(F,H)\beta(H,F',v);\quad \beta*\alpha(F,F',v)=\sum_{H\in\F}\beta(F,H,v)\alpha(H,F').\end{equation}
It is immediate that this gives well defined $G$-invariant functions.

More interestingly, Solomon in \cite{So} had defined an associative algebra structure on $\R_n(q)$, which can be stated in terms of convolution as follows.
If, $\alpha,\beta\in R_n(q)$,
\begin{equation}\label{mirconv}\alpha*\beta(F,F',v)=\sum_{H\in \F, u\in V}\alpha(F,H,u)\beta(H,F',v-u).\end{equation}
Denoting the characteristic functions of orbits by $T_{w,\beta}$ as in \cite{T}, the identity element for this product is $T_{e,\oo}$ which is the characteristic function of the orbit $\{(F,F',v)|F=F',v=0\}$.
\begin{defn}Since all the structure constants appearing from the product \eqref{mirconv} are polynomials in $q$, we can consider $\R_n(q)$ to be the specialization at $\q\mapsto q$ of a $\mathbb{C}[\q,\q^{-1}]$-algebra $R_n$. We call $R_n$  the \emph{Mirabolic Hecke Algebra}. We will mostly work with the generic mirabolic Hecke algebra which is
$$\R_n:=R_n\otimes_{\CC[\q,\q^{-1}]}\CC(\q)$$\end{defn}
\begin{rem}
The Mirabolic Hecke Algebra contains $\HH_n$ as a subalgebra, with the inclusion being given by $T_w\mapsto T_{w,\oo}$. Given the definition of the products in \eqref{bimod} and \eqref{mirconv}, this inclusion agrees with the bimodule structure defined by Travkin.
\end{rem}
\begin{rem}\label{antiinv}The involution on $\F\times\F\times V$ defined by $(F,F',v)\mapsto(F',F,v)$ induces an algebra anti-automorphism $^\star:\R_n\to\R_n$. In the natural basis for $\R_n$, this can be written as $(T_{w,\beta})^\star= T_{w^{-1},w(\beta)}$.
\end{rem}
\subsection{Comparison with Solomon's Conventions}\label{solom}
In this section we explain how to translate the work from Solomon's paper \cite{So} into the notation that we use.
We consider the group $P$ of affine transformations in $V$, which is isomorphic to the semidirect product $\gl(V)\ltimes V$. We can think of $P$ as the group of $n+1\times n+1$ matrices that fixes a nonzero vector, i.e. the \emph{mirabolic} subgroup of $\gl_{n+1}$. Following the convention of \cite{So}, we can write it as block matrices as follows:
$$P=\left\{\left.\begin{pmatrix} 1 & 0 \\ v & g \end{pmatrix}\right|v\in V,g\in\gl(V)\right\}.$$
Solomon then considers the Borel subgroup $B\subset\gl(V)$ as a subgroup of $P$ and defines the algebra of double cosets $B\backslash P/B$. The multiplication is given by the following formula, if $p_1,\ldots,p_m$ are a set of representatives for the double cosets,
$$ (Bp_iB)\cdot(Bp_jB)=\sum_{k=1}^m c^k_{ij}(Bp_kB)$$
with
\begin{equation}\label{coeff}c^k_{ij}=\df{\card\{(Bp_iB)^{-1}p_k\cap Bp_jB\}}{\card(B)}\end{equation}
which is the same as saying that the coefficient $c^k_{ij}$ equals $\card(B)$ times the number of pairs $(x,y)$, $x\in Bp_iB$, $y\in Bp_jB$ such that $xy=p_k$.
\begin{rem}Another equivalent way of defining the double coset algebra is the following: let $\CC[P]$ be the group algebra of the finite group $P$, and let $e_B:=\frac{1}{\card(B)}\sum_{b\in B}b\in\CC[P]$ be the idempotent corresponding to the  subgroup $B$. Then the algebra of double cosets is isomorphic to $e_B\CC[P]e_B$.
\end{rem}

Solomon has given a description of the double coset algebra 
in terms of generators and relations in \cite{So}. This then gives us a presentation of $R_n$ (and consequently of the generic algebra $\R_n$ and the specialized algebras $\R_n(q)$). 
\begin{thm}\label{solomon}The algebra $R_n$ is isomorphic to the $\CC[\q,\q^{-1}]$-algebra with generators $\{T_i|i=0,1,\ldots, n-1\}$ and relations
\begin{align}T_0^2 & =(\q-2)T_0+(\q-1) & \label{rel1}\\
T_i^2 & = (\q-1)T_i+\q & \quad i\geq 1 \label{rel2}\\
T_iT_{i+1}T_i& =T_{i+1}T_iT_{i+1} & \quad i\geq 1 \label{rel3}\\
T_0T_1T_0T_1& =(\q-1)(T_1T_0T_1+T_1T_0)-T_0T_1T_0 & \label{rel4}\\
T_1T_0T_1T_0& =(\q-1)(T_1T_0T_1+T_0T_1)-T_0T_1T_0 & \label{rel5}\\
T_iT_j&=T_jT_i & \quad |i-j|\geq 2 \label{rel6} 
\end{align}
\end{thm}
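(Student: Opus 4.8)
The strategy is to verify directly that the stated presentation describes the double coset algebra $e_B\CC[P]e_B$, working over the field with $q$ elements and then observing that all structure constants are polynomial in $q$. First I would set up the combinatorial bookkeeping: the double cosets $B\backslash P/B$ are indexed by the pairs $(w,\beta)$ described in Section \ref{mirha}, and the generator $T_0$ should correspond to the pair $(e,\{1\})$ while each $T_i$ for $i\geq 1$ corresponds to $(s_i,\oo)$. With this dictionary in hand, the relations \eqref{rel2}, \eqref{rel3}, \eqref{rel6} are exactly the relations of $\HH_n$ from Proposition \ref{isoconv}, embedded via $T_w\mapsto T_{w,\oo}$, so nothing new is needed there beyond citing that the subalgebra generated by $T_1,\ldots,T_{n-1}$ is a copy of $H_n$.

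The genuinely new relations are \eqref{rel1}, \eqref{rel4}, \eqref{rel5}, and each of these involves only the indices $0$ and $1$, hence can be checked inside the rank-one-plus-vector situation, i.e. the algebra $R_2$. So the core of the argument is a finite computation in $R_2(q)$: enumerate the $G=\gl_2(\Fq)$-orbits on $G/B\times G/B\times\Fq^2$ (there are finitely many, indexed by the admissible $(w,\beta)$ with $n=2$), compute the convolution products $T_0*T_0$, $T_0*T_1*T_0*T_1$ and $T_1*T_0*T_1*T_0$ using formula \eqref{mirconv}, and read off the coefficients as polynomials in $q$. For \eqref{rel1} one counts, for a fixed flag $F$ and vector $v$, the number of intermediate pairs $(H,u)$ contributing to each target orbit; the split into the identity-orbit contribution and the $T_0$-contribution produces the constant $q-1$ and the coefficient $q-2$. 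Relations \eqref{rel4} and \eqref{rel5} are the mirabolic replacements for the missing braid relation between $T_0$ and $T_1$ (the naive braid relation fails, which is the whole point), and verifying them is the analogue of the computation in \cite{HR} for the $q$-rook algebra. I would then invoke Solomon's presentation from \cite{So} to know that these relations, together with the Hecke relations, are a \emph{complete} set: that is, the abstract algebra defined by \eqref{rel1}--\eqref{rel6} has dimension at most $\sum_{k}\binom{n}{k}^2 k!$ (the number of double cosets), while the surjection onto $e_B\CC[P]e_B$ shows the dimension is at least that, forcing an isomorphism.

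The main obstacle is the dimension count, i.e. showing the abstract algebra with generators $T_0,\ldots,T_{n-1}$ and relations \eqref{rel1}--\eqref{rel6} is spanned by a set of the correct cardinality. One approach is to show that every word in the generators can be rewritten, using the relations, into a standard form indexed by the admissible pairs $(w,\beta)$ — essentially a normal-form / Bergman diamond-lemma argument, where the braid relations \eqref{rel3}, \eqref{rel6} handle the $S_n$-part and the mixed relations \eqref{rel1}, \eqref{rel4}, \eqref{rel5} let one move a single occurrence of $T_0$ past the $T_i$'s and absorb any extra occurrences. Since Solomon already established this presentation, the cleanest route for this paper is simply to cite \cite{So} for the fact that these relations present $B\backslash P/B$, and to include only the orbit-counting verification that our normalization of the generators $T_0,T_i$ matches his; the polynomiality of the structure constants, already noted in the definition of $R_n$, then upgrades the statement from $\R_n(q)$ to the $\CC[\q,\q^{-1}]$-algebra $R_n$.
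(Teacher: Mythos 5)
Your overall strategy --- reduce everything to Solomon's presentation of the double coset algebra $B\backslash P/B$ in \cite{So} --- is the same as the paper's, and your final paragraph correctly identifies this as the efficient route. But the step you dismiss as ``the orbit-counting verification that our normalization of the generators matches his'' is in fact the entire content of the paper's proof, and as you describe it, it does not suffice. Knowing that the double cosets $Bp_kB$ are in bijection with the $G$-orbits on $G/B\times G/B\times V$, and that under this bijection $T_0\mapsto T_{e,\{1\}}$ and $T_i\mapsto T_{s_i,\oo}$, only gives a linear isomorphism of the two algebras matching distinguished basis elements; it says nothing about whether the double-coset product and the convolution product \eqref{mirconv} agree. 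What the paper actually proves is that for \emph{every} triple of orbits $\Omega_i,\Omega_j,\Omega_k$, the coefficient $c^k_{ij}$ of \eqref{coeff} equals $\mathbf{1}_{\Omega_i}*\mathbf{1}_{\Omega_j}$ evaluated at a point of $\Omega_k$: one unwinds both counts to the same set $\{(g,g',u,u')\mid gg'=g_k,\ gu'+u=v_k\}$, up to the factor $\card(B)$ coming from summing over cosets $rB$ rather than over group elements $r$. Only after this structure-constant identity is established is the transfer of Solomon's presentation legitimate.

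Your alternative route (verify \eqref{rel1}, \eqref{rel4}, \eqref{rel5} by direct convolution counts and then force an isomorphism by a dimension count) could also be made to work, but it has two unaddressed points. First, the relations cannot simply ``be checked inside $R_2$'': there is no given embedding $\R_2(q)\rinto\R_n(q)$, since the flag varieties and the ambient vector spaces differ, so you would have to argue separately that the convolution sums over $H\in G/B$ and $u\in\Fq^n$ localize to the first two steps of the flag and to vectors in $F_2$. Second, the dimension count only forces an isomorphism if the map from the abstract algebra to $\R_n(q)$ is surjective, i.e.\ if $T_{e,\{1\}},T_{s_1,\oo},\ldots,T_{s_{n-1},\oo}$ actually generate $\R_n(q)$; you never establish this, and without it you only conclude that the subalgebra they generate is a quotient of the abstract algebra. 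The paper's route avoids both issues, which is why it is the one carried out there.
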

The isomorphism is given by $T_0\mapsto T_{e,\{1\}}$, corresponding to the orbit $\{(F,F',v)|F=F',v\in F_1\setminus\{0\}\}$. For $i\geq 1$, $T_i\mapsto T_{s_i,\oo}$, which are the simple reflections that generate $H_n$ as a subalgebra of $R_n$. In fact relations \eqref{rel2}, \eqref{rel3} \eqref{rel6} are the same as \eqref{relq}.
\begin{proof}All that we need to do is to show that the algebra of double cosets is isomorphic to $\R_n(q)$, then the presentation follows from \cite[Thm 6.6]{So}. 
To see why this is true, first of all we need to observe that we have the following isomorphisms of sets of orbits
$$B\backslash P/B\simeq G\backslash(G/B\times P/B)\simeq G\backslash (G/B\times G/B\times P/G)\simeq G\backslash (G/B\times G/B\times V)$$
with the composition being given by
$$B\begin{pmatrix} 1 & 0 \\ v & g \end{pmatrix}B\mapsto G\cdot(B,gB,v).$$
Now, suppose we are given $p_i, p_j,p_k\in P$ and we want to compute $c^k_{ij}$. Write $$p_i=\begin{pmatrix} 1 & 0 \\ v_i & g_i \end{pmatrix}, p_j=\begin{pmatrix} 1 & 0 \\ v_j & g_j \end{pmatrix}, p_k=\begin{pmatrix} 1 & 0 \\ v_k & g_k \end{pmatrix}.$$ 
Counting the pairs of elements $(x,y)$ such that $xy=p_k$ is the same as counting the set $\{(g,g',u,u')|gg'=g_k, gu'+u=v_k\}$, where $g\in Bg_iB$, $g'\in Bg_jB$, $u\in Bv_i$, $u'\in Bv_j$. Let $\Omega_i$ be the orbit $G\cdot(B,g_iB,v_i)$ and similarly for $\Omega_j$ and $\Omega_k$. To show that the structure constants given by the convolution in \eqref{mirconv} are the same as the coefficients in \eqref{coeff}, consider $(F,F',v):=(B,g_kB,v_k)\in\Omega_k$. We will compute the value of the convolution product of characteristic functions $\mathbf{1}_{\Omega_i}*\mathbf{1}_{\Omega_j}(F,F'v)$ and show that it equals $c^k_{ij}$. 
\begin{align*}\mathbf{1}_{\Omega_i}*\mathbf{1}_{\Omega_j}(F,F',v)&=\sum_{H\in G/B,w\in V}\mathbf{1}_{\Omega_i}(F,H,v-w)\mathbf{1}_{\Omega_j}(H,F',w) \\
\mathbf{1}_{\Omega_i}*\mathbf{1}_{\Omega_j}(B,g_kB,v_k)&=\sum_{rB\in G/B,w\in V}\mathbf{1}_{\Omega_i}(B,rB,v_k-w)\mathbf{1}_{\Omega_j}(rB,g_kB,w) \end{align*}
For the first factor on the RHS to be nonzero (and to equal one) we need $rB=g_iB$ (hence $r\in Bg_iB$), and $B(v_k-w)=Bv_i$. For the second factor in the RHS we need $r^{-1}g_kB=g_jB$, (hence $r^{-1}g_k\in Bg_jB$), and $Br^{-1}w=Bv_j$. 

The value of $\mathbf{1}_{\Omega_i}*\mathbf{1}_{\Omega_j}(F,F',v)$ is given by counting all the $r$'s and $w$'s that satisfy those conditions. If we let $g=r,g'=r^{-1}g_k,u=v_k-w,u'=r^{-1}w$, then we see that this is the exact same thing as counting the set $\{(g,g',u,u')|gg'=g_k, gu'+u=v_k\}$ as before and then dividing by $\card(B)$, because we are counting $rB$ as left cosets, therefore it is the same as $c^k_{ij}$.
\end{proof}

\begin{rem}The anti-involution $^\star$ of Remark \ref{antiinv} leaves invariant the generators: $T_i^\star=T_i$ for $i=0,\ldots,n-1$.\end{rem}
\subsection{Irreducible representations of $\R_n$}\label{irrepsrn}
For general values of $q\in\CC^{\times}$, the algebra $\R_n(q)$ is a semisimple algebra of dimension $\sum_{k=0}^n k! \binom{n}{k}^2$, and its irreducible representations have been described by Siegel in \cite{Si}. They are parametrized by pairs $(\theta,k)$, where $0\leq k\leq n$ and $\theta$ is a partition of $k$. In \cite[Prop 3.10]{Si} there is an explicit formula for the action of $\R_n(q)$ on the irreducible left module $M_n^{\theta,k}$. With some similar computations to the ones that we will do in the proof of Proposition \ref{isoiso}, it can be shown to be equivalent to what is described in the following proposition. We state the result for the generic algebra $\R_n$, instead of the specializations, but nothing changes.

Fix $k$ such that $0\leq k\leq n$. Let $\HH_{(k)}\subset\HH_n$ be the subalgebra generated by $T_{s_1},\ldots,T_{s_{k-1}}$, and let $\HH_{(n-k)}\subset\HH_n$ be the subalgebra generated by $T_{s_{k+1}},\ldots,T_{s_{n-1}}$. They are isomorphic to the Iwahori-Hecke algebras of $S_k$ and $S_{n-k}$ respectively. 
\begin{prop}\label{irrep} There is an isomorphisms of left $\HH_n$-modules
\begin{equation}M_n^{\theta,k}\simeq\HH_n\otimes_{\HH_{(k)}\otimes\HH_{(n-k)}} V^\theta\boxtimes \CC_{\text{sign}};\end{equation}
where $V^\theta$ is the irreducible representation of $\HH_{(k)}$ corresponding to the partition $\theta\vdash k$ and $\CC_{\text{sign}}$ is the one dimensional sign representation of $\HH_{(n-k)}$.

Moreover, consider the basis $\{T_w\otimes v_T\}_{w,T}$ of $M_n^{\theta,k}$, where $w\in S_n/(S_k\times S_{n-k})$ such that $w$ is the element with minimal length in the coset, and $\{v_T|T\in\T(\theta)\}$ is a basis of $V^\theta$.
Then 
$$T_0\cdot (T_w\otimes v_T)=\left\{\begin{array}{cc} (\q-1)T_w\otimes v_T & \text{ if }w^{-1}(1)\in\{1,\ldots,k\} \\
-T_w\otimes v_T & \text{ if }w^{-1}(1)\in\{k+1,\ldots,n\} \end{array} \right. .$$
\end{prop}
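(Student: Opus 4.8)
The plan is to reduce the claimed description of $M_n^{\theta,k}$ to Siegel's explicit formula in \cite[Prop 3.10]{Si} by matching the two module structures on a well-chosen basis. Concretely, I would first recall the geometric/combinatorial model used by Siegel: the irreducible $M_n^{\theta,k}$ is realized inside a space of functions indexed by pairs $(w,\beta)$ with $|\beta|$ fixed, and I would identify the relevant generating subspace with $V^\theta \boxtimes \CC_{\text{sign}}$ over $\HH_{(k)}\otimes\HH_{(n-k)}$. The first substantive step is to check that $\HH_{(k)}\otimes\HH_{(n-k)}$ really is a subalgebra of $\HH_n$ on which the induced module makes sense — this is standard, as $S_k\times S_{n-k}$ is a parabolic subgroup of $S_n$ and the Bruhat order gives the minimal-length coset representatives $w\in S_n/(S_k\times S_{n-k})$, so $\{T_w\}$ is a free basis of $\HH_n$ over $\HH_{(k)}\otimes\HH_{(n-k)}$. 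Hence as a left $\HH_n$-module the induced module $\HH_n\otimes_{\HH_{(k)}\otimes\HH_{(n-k)}} V^\theta\boxtimes\CC_{\text{sign}}$ has the stated basis $\{T_w\otimes v_T\}$, and its dimension is $\binom{n}{k}\dim V^\theta$; summing over $\theta\vdash k$ and $0\le k\le n$ recovers $\sum_k k!\binom{n}{k}^2$, matching the dimension count for $\R_n(q)$, which is a useful consistency check and also shows it suffices to produce a nonzero $\R_n$-module homomorphism to or from $M_n^{\theta,k}$ to conclude it is an isomorphism, once we know both are irreducible.

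Next I would pin down the $\R_n$-action. The subalgebra $\HH_n\subset\R_n$ acts in the obvious (induced) way, so the content of the proposition is the formula for $T_0=T_{e,\{1\}}$. Here I would use the presentation of Theorem \ref{solomon}: $T_0$ satisfies $T_0^2=(\q-2)T_0+(\q-1)=(T_0-(\q-1))(T_0+1)$, so on any module $T_0$ acts with eigenvalues among $\{\q-1,-1\}$, which already forces the two-case shape of the formula. To determine on which basis vectors $T_0$ acts by $\q-1$ versus $-1$, I would compute $T_0\cdot(T_w\otimes v_T)$ directly using the convolution formula \eqref{mirconv} for the characteristic function $T_{e,\{1\}}$ acting on the geometric model, or equivalently push the computation through Siegel's formula in \cite[Prop 3.10]{Si}. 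The key geometric fact is that $T_{e,\{1\}}$ records whether the vector $v$ lies in the line $F_1$ of the first flag; under the identification of basis vectors with cosets, the condition "$v\in F_1$" translates into the combinatorial condition $w^{-1}(1)\in\{1,\dots,k\}$ on the minimal-length representative $w$ (since $\beta=\{1,\dots,k\}$ is the standard choice of size-$k$ subset in this model). The relations \eqref{rel4} and \eqref{rel5} relating $T_0$ and $T_1$ then serve as a check that the action of $T_0$ together with the induced $\HH_n$-action is consistent, i.e. genuinely defines an $\R_n$-module structure; I would verify these braid-type relations on the basis, which is the one place a genuine (if routine) calculation is unavoidable.

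Finally, I would assemble the pieces: the $\HH_n$-module structure is induced by construction, the $T_0$-action is the stated one, and together they satisfy all the relations of Theorem \ref{solomon}, so we obtain a well-defined $\R_n$-module $N$ with the asserted basis. Comparing with Siegel's $M_n^{\theta,k}$ — either by exhibiting an explicit basis-to-basis bijection intertwining the actions, or by showing $N$ is irreducible (using that $V^\theta$ is irreducible over $\HH_{(k)}$ and that $T_0$ separates the $\HH_n$-isotypic pieces of $N$) and has the right dimension, hence must be one of Siegel's list, and then matching parameters $(\theta,k)$ via the $T_0$-eigenvalue multiplicities — completes the identification. The main obstacle I anticipate is bookkeeping: translating Siegel's conventions (his indexing of orbits, his normalization of the generators, and the precise form of his formula in \cite[Prop 3.10]{Si}) into the $(w,\beta)$-notation of \cite{T} used here, and verifying that the minimal-length-coset-representative basis is the one on which his formula takes the clean two-case form above. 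The braid relation check \eqref{rel4}–\eqref{rel5} on the induced module is the only computational step of real length, and it is the natural candidate for "with some similar computations to the ones in the proof of Proposition \ref{isoiso}."
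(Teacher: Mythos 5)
Your proposal is essentially the paper's own route: the paper gives no self-contained argument but asserts that Siegel's explicit formula in \cite[Prop 3.10]{Si} is translated into the induced-module description by basis-matching computations of the same kind as in the proof of Proposition \ref{isoiso}, which is exactly the core of your plan (identify the minimal-coset-representative basis of the parabolically induced module, then check the $T_0$ and $T_i$ actions vector by vector against Siegel's formula). The extra scaffolding you add (the eigenvalue constraint from $T_0^2=(\q-2)T_0+(\q-1)$, the verification of relations \eqref{rel4}--\eqref{rel5}, the dimension count) is sound but supplementary rather than a different method.
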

This explicit description, in particular, tells us that $\dim M_n^{\theta,k}=\binom{n}{k} f_\theta$, where $f_\theta$ is the number of standard tableaux of shape $\theta$. 
Siegel has also described the restriction functor from $\R_n$ to $\R_{n-1}$.
\begin{prop}[\cite{Si}, Cor 3.23]\label{siegelres}
\begin{equation} \Res_{\R_{n-1}}^{\R_n}M_n^{\theta,k}=M_{n-1}^{\theta,k}\oplus\bigoplus_{\nu\vdash k-1}M_{n-1}^{\nu,k-1}\end{equation}
where the term $M_{n-1}^{\theta,k}$ is zero if $k>n-1$ and the sum is over all partitions $\nu$ of $k-1$ that are obtained from $\theta$ by removing one box.
\end{prop}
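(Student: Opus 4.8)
The plan is to deduce this from the explicit description of the modules $M_n^{\theta,k}$ in Proposition \ref{irrep}, reducing everything to the branching rule for the Iwahori--Hecke algebra of Section \ref{seminorm}. I treat the case $0<k<n$; the extremes $k=0$ and $k=n$ go through the same way with one of the two terms on the right absent, in accordance with the convention that $M_{n-1}^{\theta,k}=0$ when $k>n-1$. Write $W:=V^\theta\boxtimes\CC_{\text{sign}}$, viewed as a module over $\HH_{(k)}\otimes\HH_{(n-k)}$, so that $M_n^{\theta,k}=\HH_n\otimes_{\HH_{(k)}\otimes\HH_{(n-k)}}W$, with $\CC(\q)$-basis the vectors $T_w\otimes v$, where $w$ runs over the minimal-length representatives $D$ of $S_n/(S_k\times S_{n-k})$ and $v$ over a basis of $W$.

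First I would restrict to $\HH_{n-1}$. When $0<k<n$ there are exactly two double cosets in $S_{n-1}\backslash S_n/(S_k\times S_{n-k})$; on the level of the minimal representatives $w\in D$ they are distinguished by whether $w(n)=n$ or $w(k)=n$. Since left multiplication by $S_{n-1}$ and right multiplication by $S_k\times S_{n-k}$ preserve this dichotomy, the spans of the two families $\{T_w\otimes v\}$ are $\HH_{n-1}$-submodules; this is the Mackey decomposition for a parabolic subalgebra of a Hecke algebra, resting on the fact that $\{T_w h : w\in D,\ h\text{ in a basis of }\HH_{(k)}\otimes\HH_{(n-k)}\}$ is a basis of $\HH_n$. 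In the first family ($w(n)=n$) the $w$ are precisely the minimal representatives of $S_{n-1}/(S_k\times S_{n-1-k})$, and since $\CC_{\text{sign}}$ for $S_{n-k}$ restricts to $\CC_{\text{sign}}$ for $S_{n-1-k}$, this summand is $\HH_{n-1}\otimes_{\HH_{(k)}\otimes\HH_{(n-1-k)}}(V^\theta\boxtimes\CC_{\text{sign}})$, i.e. $M_{n-1}^{\theta,k}$ as an $\HH_{n-1}$-module. In the second family ($w(k)=n$) the summand is $\HH_{n-1}\otimes_{\HH_{(k-1)}\otimes\HH_{(n-k)}}\bigl((\Res^{\HH_{(k)}}_{\HH_{(k-1)}}V^\theta)\boxtimes\CC_{\text{sign}}\bigr)$; the branching rule for the Iwahori--Hecke algebra, which one reads off from the seminormal construction of Section \ref{seminorm} by sorting standard tableaux of $\theta$ according to the shape of their restriction to $\{1,\dots,k-1\}$, gives $\Res^{\HH_{(k)}}_{\HH_{(k-1)}}V^\theta=\bigoplus_\nu V^\nu$ over $\nu\vdash k-1$ obtained from $\theta$ by deleting a box, so this summand is $\bigoplus_\nu M_{n-1}^{\nu,k-1}$ as an $\HH_{n-1}$-module.

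It remains to check the action of the extra generator $T_0$; since $\R_{n-1}$ is generated by $\HH_{n-1}$ together with $T_0$ and is semisimple, once the $T_0$-action agrees with the one prescribed by Proposition \ref{irrep} the identification with the claimed direct sum is complete. By Proposition \ref{irrep} the $T_0$-eigenvalue of $T_w\otimes v$ depends only on $w$: it equals $\q-1$ precisely when $w^{-1}(1)\le k$, i.e. when $1$ is the smallest element of $\{w(1),\dots,w(k)\}$ (for a minimal representative this is also the same as $1$ occupying the first position of $w$), and $-1$ otherwise. For the first summand the identification $w\mapsto w$ preserves this verbatim. For the second summand the identification carries $w$ to the minimal representative $w'$ of $S_{n-1}/(S_{k-1}\times S_{n-k})$ with $\{w'(1),\dots,w'(k-1)\}=\{w(1),\dots,w(k)\}\setminus\{n\}$; since $n\ne 1$, deleting $n$ neither creates nor destroys the property ``$1$ is the smallest element of the first block'', so the $T_0$-eigenvalue is again preserved, now with $k-1$ as the relevant threshold, matching the action on $M_{n-1}^{\nu,k-1}$.

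I expect this last comparison of the $T_0$-action to be the only genuine obstacle. The $\HH_{n-1}$-module structure by itself does \emph{not} determine the decomposition, because the restrictions of the various $M_{n-1}^{\mu,j}$ to $\HH_{n-1}$ overlap; so one must follow the combinatorics of minimal double-coset representatives precisely enough to compare $T_0$ on both sides. Everything else — the Mackey decomposition, the stability of $\CC_{\text{sign}}$ under restriction, and the Hecke branching rule — is standard bookkeeping.
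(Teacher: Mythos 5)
Your argument is correct in outline, and it takes a genuinely different route from the paper. The paper does not prove Proposition \ref{siegelres} at all: it is quoted from Siegel \cite{Si}, and the only derivation the paper itself offers is the remark after Proposition \ref{isoiso}, namely the one-line deduction from the isomorphism $M_n^{\theta,k}\simeq M^{(\theta,1^{n-k})}$ together with the bipartition branching rule of Theorem \ref{indres}: removing a box from $(\theta,1^{n-k})$ yields either $(\nu,1^{n-k})$, which is $M_{n-1}^{\nu,k-1}$, or $(\theta,1^{n-k-1})$, which is $M_{n-1}^{\theta,k}$. You instead argue directly from the induced-module description of Proposition \ref{irrep}, splitting along the two double cosets in $S_{n-1}\backslash S_n/(S_k\times S_{n-k})$ and invoking the Hecke-algebra branching rule. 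What your approach buys is self-containedness: it needs only Proposition \ref{irrep} and none of the cyclotomic machinery of Section \ref{cycloha}, and it exhibits the two summands concretely inside $M_n^{\theta,k}$. What it costs is exactly the step you flag yourself: since $T_0$ commutes with $T_2,\dots,T_{n-2}$ but not with $T_1$, an abstract Mackey isomorphism of $\HH_{n-1}$-modules does not automatically intertwine $T_0$, so for the second summand you must write down the isomorphism explicitly on the basis $\{T_w\otimes v_T\}$ --- sending $w$ to the minimal representative $w'$ with first block $\{w(1),\dots,w(k)\}\setminus\{n\}$ and sorting the vectors $v_T$ by the shape of the restriction of $T$ to $\{1,\dots,k-1\}$ --- and verify that it intertwines $T_1,\dots,T_{n-2}$ as well as $T_0$. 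That verification is a case analysis of precisely the kind carried out in the paper's proof of Proposition \ref{isoiso}, and your key observation for the $T_0$ part (deleting $n\neq 1$ from the first block does not change whether $w^{-1}(1)$ lies in the first block) is the right one; your remark that the $\HH_{n-1}$-module structure alone cannot determine the answer is also correct, so this check is necessary and not a formality. With that case analysis written out, the proof is complete.
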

By Proposition \ref{siegelres} and Frobenius reciprocity we obtain immediately:
\begin{prop}
\begin{equation}\Ind_{\R_{n}}^{\R_{n+1}}M_{n}^{\theta,k}=M_{n+1}^{\theta,k}\oplus\bigoplus_{\mu\vdash k+1}M_{n+1}^{\mu,k+1} \end{equation}
where the sum is over all partitions $\mu$ of $k+1$ that are obtained from $\theta$ by adding one box.
\end{prop}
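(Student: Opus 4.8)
The plan is to deduce the induction formula directly from Siegel's restriction formula (Proposition \ref{siegelres}) by Frobenius reciprocity, without recomputing anything from the convolution side. The key point is that the set of irreducibles $\{M_n^{\theta,k}\}$ forms a complete collection for the semisimple algebra $\R_n$, so the multiplicities in an induced module are determined by the multiplicities in restrictions of irreducibles of $\R_{n+1}$.

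Concretely, I would argue as follows. For a semisimple algebra inclusion $\R_n\subset\R_{n+1}$, Frobenius reciprocity gives, for any partition $\rho\vdash l$ with $0\le l\le n+1$,
\begin{equation*}
\dim\Hom_{\R_{n+1}}\bigl(\Ind_{\R_n}^{\R_{n+1}}M_n^{\theta,k},\,M_{n+1}^{\rho,l}\bigr)=\dim\Hom_{\R_n}\bigl(M_n^{\theta,k},\,\Res_{\R_n}^{\R_{n+1}}M_{n+1}^{\rho,l}\bigr).
\end{equation*}
By Proposition \ref{siegelres} (applied with $n+1$ in place of $n$), $\Res_{\R_n}^{\R_{n+1}}M_{n+1}^{\rho,l}=M_n^{\rho,l}\oplus\bigoplus_{\nu\vdash l-1,\ \nu\subset\rho}M_n^{\nu,l-1}$, where $M_n^{\rho,l}$ is understood to be $0$ when $l>n$. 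By Schur's lemma (the algebras are split semisimple, so the irreducibles are absolutely irreducible and pairwise non-isomorphic for distinct pairs $(\theta,k)$), the right-hand $\Hom$ space is one-dimensional exactly when either $(\rho,l)=(\theta,k)$, or $l=k+1$ and $\rho$ is obtained from $\theta$ by adding one box; it is zero otherwise. Hence $\Ind_{\R_n}^{\R_{n+1}}M_n^{\theta,k}$ contains each of $M_{n+1}^{\theta,k}$ and each $M_{n+1}^{\mu,k+1}$ (with $\mu$ obtained from $\theta$ by adding a box) exactly once, and nothing else, which is the claimed decomposition. One should note that $M_{n+1}^{\theta,k}$ genuinely appears here since $k\le n<n+1$, and that when $k=n$ the partitions $\mu\vdash k+1=n+1$ obtained by adding a box to $\theta$ still make sense, so no term drops out on the induction side.

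The only points requiring a little care, rather than being genuine obstacles, are: (i) confirming that $\R_n$ and $\R_{n+1}$ are both split semisimple for generic $\q$ (stated in Section \ref{irrepsrn}), so that Frobenius reciprocity together with Schur's lemma pins down a module from its Hom-multiplicities against all irreducibles; and (ii) checking the bookkeeping that "$\nu$ obtained from $\theta$ by removing a box" in the restriction formula corresponds, under Frobenius reciprocity, to "$\mu$ obtained from $\theta$ by adding a box" in the induction formula — i.e., $\nu\subset\theta$ with $|\theta/\nu|=1$ iff $\theta\subset\mu$ with $|\mu/\theta|=1$ after relabeling. Neither is substantive; the real content is entirely carried by Siegel's Proposition \ref{siegelres}, and the proof is essentially a two-line formal argument once that is in hand.
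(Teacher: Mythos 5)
Your proposal is correct and follows exactly the paper's route: the paper derives this proposition from Proposition \ref{siegelres} by Frobenius reciprocity in a single line, and your argument simply spells out the same deduction (semisimplicity, Schur's lemma, and the adding/removing-a-box bookkeeping) in full detail.
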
  

\section{The cyclotomic Hecke algebra}\label{cycloha}
\begin{defn}\label{cyclo}Let $u_1,\ldots,u_r\in\CC$, then the \emph{cyclotomic Hecke algebra} $H_n(u_1,\ldots,u_r)$ is the $\CC[\q,\q^{-1}]$-algebra with generators $X,T_1\ldots,T_{n-1}$ and relations
\begin{align*} T_i^2 & = (\q-1)T_i+\q &  i\geq 1\\
 T_iT_{i+1}T_i& =T_{i+1}T_iT_{i+1} &  i\geq 1 \\
 T_iT_j&=T_jT_i & |i-j|\geq 2 \\
XT_i&=T_iX & i\geq 2 \\
XT_1XT_1&=T_1XT_1X & \\
(X-u_1)\cdots(X-u_r)&=0
\end{align*}
\end{defn}
This is an algebra of rank $r^k k!$ over $\CC[\q,\q^{-1}]$, and it was introduced by Ariki and Koike as a deformation of the group algebra of the complex reflection group $G(r,1,k)=(\ZZ/r\ZZ)\wr S_n$. We now recall some known results about this algebra.
\begin{thm}[\cite{Ar}]\label{ariki}If $q\in\CC^\times$, the specialization $$H_n(u_1,\ldots,u_r;q):=H_n(u_1,\ldots,u_r)/(\q-q)$$ is semisimple if and only if
\begin{align*}\quad q^d u_i&\neq u_j \quad & 1\leq i,j\leq r, i\neq j,&\quad -n<d<n;\\
\quad [n]_q!&\neq 0 \quad & \text{where}\quad [n]_q!=\prod_{k=1}^n [k]_q,&\quad [k]_q=1+q+\ldots+q^{k-1}.\end{align*} 
\end{thm}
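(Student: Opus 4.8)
The plan is to recognise $H_n(u_1,\dots,u_r;q)$ as a symmetric algebra and reduce semisimplicity to an explicit nonvanishing condition, which turns out to be exactly the two displayed families of inequalities. This algebra has the Ariki--Koike basis $\{T_w X_1^{a_1}\cdots X_n^{a_n}\}$, where $X_1=X$ and $X_{k+1}=q^{-1}T_kX_kT_k$, and with respect to the linear functional dual to this basis it is a symmetric algebra. A finite-dimensional symmetric $\CC$-algebra is split semisimple if and only if this symmetrising form is nondegenerate, equivalently if and only if all of its Schur elements $s_{\ull}$ are nonzero, where $\ull$ runs over the $r$-multipartitions (that is, $r$-tuples of partitions) of $n$. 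So the theorem reduces to the statement that $s_{\ull}\neq 0$ for every $\ull$ if and only if $q^d u_i\neq u_j$ for all $i\neq j$ and $|d|<n$, and $[n]_q!\neq 0$.

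For the direction ``conditions $\Rightarrow$ semisimple'' I would, rather than invoke Schur elements, construct the simple modules directly by a seminormal form --- this is also how the semisimple case was originally handled by Ariki and Koike. To each $\ull$ and each standard $\ull$-tableau $\tu$ attach the residue sequence $(c_1(\tu),\dots,c_n(\tu))$, where $c_k(\tu)=q^{\,b-a}u_c$ if $k$ sits in row $a$, column $b$ of the $c$-th component. The two hypotheses are precisely what is needed to guarantee the \emph{separation} property: distinct standard tableaux, even of different shape $\ull$, have distinct residue sequences; within a tableau $c_k(\tu)\neq c_{k+1}(\tu)$ whenever $k,k+1$ lie in neither a common row nor a common column; and every scalar one has to invert along the way --- differences of residues, and the quantum integers $[k]_q$ --- is a unit. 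Granting separation, for each $\ull$ one defines a vector space with basis $\{v_\tu\}$, declares $X_k v_\tu=c_k(\tu)v_\tu$, and lets each $T_k$ act by the explicit $1\times1$ or $2\times2$ block determined by the pair $c_k(\tu),c_{k+1}(\tu)$; verifying the relations of Definition~\ref{cyclo} is routine, and irreducibility together with pairwise non-isomorphism follow by simultaneously diagonalising $X_1,\dots,X_n$ and reading $(\ull,\tu)$ off the joint spectrum. Since $\sum_{\ull}(f^{\ull})^2=r^n n!=\dim_\CC H_n(u_1,\dots,u_r;q)$, these modules exhaust the algebra.

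For the converse I would argue contrapositively, in two cases. If $[n]_q!=0$, then $X\mapsto u_1$, $T_i\mapsto T_i$ defines a surjective algebra homomorphism $H_n(u_1,\dots,u_r;q)\to\HH_n(q)$ (all relations of Definition~\ref{cyclo} hold, the cyclotomic one because a factor vanishes), so semisimplicity of $H_n(u_1,\dots,u_r;q)$ would force $\HH_n(q)$ semisimple, which fails exactly when $[n]_q!=0$. If instead $q^d u_i=u_j$ with $i\neq j$ and $-n<d<n$, the obstruction already appears at size $n_0=|d|+1$, where two standard multipartition-tableaux distributing $n_0$ boxes between components $i$ and $j$ have coinciding residue sequences and the seminormal recipe breaks down; concretely, the linear factor $q^d u_i-u_j$ divides the Schur element $s_{\ull}$ of a suitable $r$-multipartition $\ull$ of $n$, so $s_{\ull}=0$ and the algebra is not semisimple. (Equivalently, one can write down by hand a nonzero nilpotent element in the Jacobson radical --- a non-split self-extension of a one-dimensional module on which $X$ acts by $u_i$ --- and check it survives.)

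The main obstacle is the necessity direction in the residue-collision case. Either one needs the closed product formula for the Schur elements $s_{\ull}$ --- a product over the boxes of $\ull$ of quantum-hook factors, times a product over pairs of boxes lying in different components of linear factors of the form $q^{\,e}u_i-u_j$ --- whose derivation is a substantial if essentially mechanical computation, or one runs an inductive structural argument along the chain $H_1\subset H_2\subset\cdots\subset H_n$, which is genuinely delicate in turning the numerical coincidence $q^d u_i=u_j$ into an honest element of the radical. By contrast, once the separation property has been extracted from the hypotheses, the ``conditions $\Rightarrow$ semisimple'' half is pure bookkeeping: every seminormal matrix entry is forced, so there is nothing to choose.
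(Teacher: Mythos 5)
The paper does not prove Theorem~\ref{ariki}: it is quoted from Ariki's paper \cite{Ar}, so there is no internal argument to compare against. Your overall architecture --- seminormal construction for sufficiency, quotient maps and Schur elements or radical elements for necessity --- is the standard one and consistent with the cited source, but as written the sufficiency half contains a genuine error. Your ``separation property'' does \emph{not} follow from the two hypotheses: they are satisfied at $q=1$ whenever the $u_i$ are pairwise distinct (and vacuously for $r=1$, where the algebra is $\CC[S_n]$), yet at $q=1$ every residue $c_k(\tu)=q^{b-a}u_c$ collapses to $u_c$, so distinct standard tableaux --- of the same shape or of different shapes --- share the same residue sequence. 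Consequently ``simultaneously diagonalising $X_1,\dots,X_n$ and reading $(\ull,\tu)$ off the joint spectrum'' proves neither irreducibility nor pairwise non-isomorphism in that case, and this is precisely the step you describe as routine. What the hypotheses actually guarantee is the invertibility of the specific scalars in the seminormal matrix entries: for two boxes in the same component the relevant entry $\frac{\q-1}{1-\q^{c}}$ equals $-1/[c]_q$ with $0<|c|<n$, which is finite and nonzero at $q=1$ even though the residues coincide and which is controlled by $[n]_q!\neq 0$; for boxes in different components the relevant factor is $q^{c}u_i-u_j$ with $|c|<n$, controlled by the first hypothesis. Irreducibility and non-isomorphism then need a separate argument (for instance induction along the tower $H_1\subset\cdots\subset H_n$ via the branching rule, as in Ariki--Koike), not the naive spectral one.

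Separately, the necessity direction in the case $q^d u_i=u_j$ --- which is the real content of Ariki's theorem beyond the Ariki--Koike construction --- is deferred entirely either to the closed product formula for the Schur elements or to an unspecified nilpotent element of the radical, neither of which you supply. You are candid that this is the main obstacle, and your roadmap locates the difficulty correctly (the factor $q^d u_i-u_j$ must be shown to divide some Schur element of an $r$-multipartition of $n$, or a non-split self-extension must be exhibited), but as it stands this half is a plan rather than a proof.
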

\begin{defn}As before, we define the generic cyclotomic Hecke algebra to be
$$\HH_n(u_1,\ldots,u_r):= H_n(u_1,\ldots,u_r)\otimes_{\CC[\q,\q^{-1}]}\CC(\q).$$
\end{defn}
Notice in particular that  in the generic algebra $\HH_n(u_1,\ldots,u_r)$ the condition $[n]_\q !\neq 0$ is always satisfied.
\begin{thm}[\cite{AK}, Theorem 3.7]\label{arko}When $H_n(u_1,\ldots,u_r;q)$ is semisimple, its irreducible representations are parametrized by $r$-partitions of $n$. That is, by $\ull=(\lambda^1,\ldots,\lambda^r)$ where $\lambda^1,\ldots,\lambda^r$ are all partitions and $|\lambda^1|+\ldots+|\lambda^r|=n$.
\end{thm}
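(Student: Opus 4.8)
The plan is to exhibit, for every $r$-partition $\ull=(\lambda^1,\ldots,\lambda^r)$ of $n$, an explicit module $S^{\ull}$ for $H_n(u_1,\ldots,u_r;q)$ and then to prove, using the semisimplicity hypotheses of Theorem \ref{ariki}, that the $S^{\ull}$ are irreducible, pairwise non-isomorphic, and exhaust the dimension of the algebra. I would begin with the combinatorics. A \emph{standard $\ull$-tableau} is a filling of the $r$-tuple of Young diagrams of $\lambda^1,\ldots,\lambda^r$ by $1,\ldots,n$ that increases along rows and down columns in each component; let $f^{\ull}$ denote their number. To the box holding $i$, sitting in component $j$ at row $a$ and column $b$, attach the scalar $c_T(i)=u_j\,q^{\,b-a}$, the \emph{content} of $i$ in $T$. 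The numerical fact I would need is the identity $\sum_{\ull}(f^{\ull})^2=r^n\,n!$, which follows at once from $f^{\ull}=\binom{n}{|\lambda^1|,\ldots,|\lambda^r|}\prod_{i=1}^r f^{\lambda^i}$ together with the observation that $r^n n!$ is the order of the wreath product $(\ZZ/r\ZZ)\wr S_n$, whose irreducibles are classically indexed by $r$-partitions of $n$ with dimensions exactly $f^{\ull}$ (alternatively, a Robinson--Schensted bijection between colored permutations and pairs of standard $\ull$-tableaux of equal shape gives it directly).

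Next I would set $S^{\ull}=\bigoplus_T \CC\,v_T$, the sum over standard $\ull$-tableaux $T$, and define the action by a seminormal recipe modeled on Section \ref{seminorm}: $X\,v_T=c_T(1)\,v_T$, while $T_i$ acts by $q$ when $i,i+1$ lie in the same row of $T$, by $-1$ when they lie in the same column, and otherwise by an explicit $2\times 2$ matrix on $\CC v_T\oplus\CC v_{s_i(T)}$ whose entries are rational in $c_T(i)$ and $c_T(i+1)$. One then checks that these operators satisfy all the relations of Definition \ref{cyclo}: the relation $(X-u_1)\cdots(X-u_r)=0$ is automatic because each $c_T(1)$ is one of the $u_j$, and the braid and mixed relations $T_iT_{i+1}T_i=T_{i+1}T_iT_{i+1}$, $XT_1XT_1=T_1XT_1X$ reduce to identities inside the small blocks. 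The semisimplicity hypotheses enter exactly here: the first guarantees that the component index of a box is recoverable from its content and that no denominator or off-diagonal entry appearing in the $2\times 2$ blocks vanishes, while $[n]_q!\neq 0$ keeps $q$ away from the small roots of unity that would make contents coincide within a component; together they force the content sequences $(c_T(1),\ldots,c_T(n))$ to be pairwise distinct as $T$ ranges over all standard tableaux of all shapes.

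To finish I would introduce the Jucys--Murphy elements $L_1=X$, $L_{i+1}=q^{-1}T_iL_iT_i$. These act diagonally on $S^{\ull}$ with $v_T$ having joint eigenvalue $(c_T(1),\ldots,c_T(n))$; since any submodule is stable under the $L_i$ and these have one-dimensional joint eigenspaces, a submodule is a span of some of the $v_T$, and the off-diagonal entries in the $2\times 2$ blocks (nonzero, by the hypotheses) link $v_T$ to $v_{s_i(T)}$, so the whole module is generated by any one $v_T$ and hence is irreducible. Distinctness of the content multisets across shapes gives $S^{\ull}\not\simeq S^{\umu}$ for $\ull\neq\umu$. Then $\sum_{\ull}(\dim S^{\ull})^2=\sum_{\ull}(f^{\ull})^2=r^n n!=\dim_\CC H_n(u_1,\ldots,u_r;q)$, so by Artin--Wedderburn the $S^{\ull}$ form a complete irredundant list of simple modules.

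The step I expect to fight hardest is the verification that the seminormal recipe is well defined — that the explicit $2\times 2$ matrices really satisfy the braid relation and the mixed relation $XT_1XT_1=T_1XT_1X$ simultaneously, with every denominator invertible. One can circumvent the calculation with a Tits deformation argument: $H_n(u_1,\ldots,u_r)$ is free of rank $r^n n!$ over $\CC[\q,\q^{-1}][u_1,\ldots,u_r]$, its generic fibre is semisimple, and its specialization at $\q=1$, $u_j=\zeta^{j-1}$ with $\zeta$ a primitive $r$-th root of unity is the group algebra $\CC[(\ZZ/r\ZZ)\wr S_n]$, whose simple modules are classically parametrized by $r$-partitions of $n$; the Tits deformation theorem transports this parametrization, and the dimensions $f^{\ull}$, to every semisimple specialization. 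Making the freeness over the two-variable base and the deformation bookkeeping precise is the technical heart of that route.
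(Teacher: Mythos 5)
This theorem is imported from Ariki--Koike and the paper gives no proof of it, only the citation to \cite{AK}. Your proposal is a faithful reconstruction of the argument in that reference (seminormal representations indexed by standard multitableaux with contents $u_j\q^{b-a}$, Jucys--Murphy separation under the semisimplicity hypotheses, and the dimension count against $r^n n!$), with the one genuinely hard step --- verifying that the seminormal matrices satisfy all the defining relations --- correctly identified and legitimately replaceable by the Tits deformation argument you sketch.
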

Ariki and Koike give an explicit construction of each of those representations. We will only discuss a special case of it, in the next section.
\subsection{Representation theory of $\HH_n(1,0)$}\label{repshn}
We will focus on the case of the cyclotomic Hecke algebra $\HH_n(1,0)$. This means that we pick parameters $r=2$, $u_1=1$, and $u_2=0$ in Definition \ref{cyclo}. The defining relation $(X-u_1)\cdots (X-u_r)=0$ then becomes simply $ (X-1)X=0$ or
\begin{equation}\label{xeqe} X^2=X.\end{equation}
In this situation, by Theorem \ref{ariki}, $\HH_n(1,0)$ is semisimple because $[n]_\q !\neq 0$ and $\q^d\cdot 1\neq 0$ for all $d\in\ZZ$. Then, according to Theorem \ref{arko}, the representations of $\HH_n(1,0)$ are parametrized by bipartitions of $n$, $\ull=(\lambda^1,\lambda^2)$, $|\lambda^1|+|\lambda^2|=n$. 
\begin{rem}All the results of this section also hold for all the specializations $H(1,0;q)$ that are semisimple, even if we only state them for $\HH_n(1,0)$.\end{rem}
\begin{defn}Given a bipartition $\ull$, a \emph{standard bi-tableau} of shape $\ull$ is a pair $U=(U^1,U^2)$, with $U^i$ a tableau of shape $\lambda^i$, such that $U$ is filled with the numbers $1,\ldots,n$ in an increasing way from left to right on rows of the same tableau and from top to bottom on columns. \end{defn}
\begin{exa}Consider $\ull=((2,2),(3,1,1))$, a bipartition of $9$, then an example of a standard bi-tableau of shape $\ull$ is
$$ \left(\young(14,38), \young(269,5,7)\right).$$ 
\end{exa}
\begin{defn}Given a standard bi-tableau, $U=(U^1,U^2)$ with $n$ boxes, and an integer $1\leq i\leq n$ such that $i\in U^j$, we define the \emph{content} 
$$c_U(i)=(j-1)+c_{U^j}(i).$$ 
Here $c_{U^j}(i)$ is as in Definition \ref{content}.
\end{defn}
Given a bipartition $\ull$ of $n$, the corresponding irreducible representation $M^{\ull}$ of $\HH_n(1,0)$ is constructed as follows (see \cite{AK}). Let $\T(\ull)$ be the set of standard bi-tableaux of shape $\ull$, then $$M^{\ull}=\CC(\q)\{v_U|U=(U^1,U^2)\in\T(\ull)\}$$ with the action of the generators given by
\begin{equation}\label{action}X v_U=\left\{\begin{array}{cl} v_U & \text{ if }1\in U^1; \\ 0 & \text{ if } 1\in U^2 \end{array}\right. 
\end{equation}
\begin{equation*}T_i v_U=\left\{\begin{array}{cl} \q v_U & \text{ if $i,i+1$ are on the same row} \\
& \quad\text{ of the same tableau;}\\
-v_U & \text{ if $i,i+1$ are on the same column}\\
&\quad\text{ of the same tableau;} \\
\frac{\q-1}{1-\q^k}v_U+\left(1+\frac{\q-1}{1-\q^k}\right)v_{s_i(U)} & \text{ if $i,i+1$ are in the same tableau}\\
&\quad\text{ but not in the same row or column;} \\
v_{s_i(U)} & \text{ if $i\in U^1$, $i+1\in U^2$;} \\
& \\
(\q-1)v_U+\q v_{s_i(U)} & \text{ if $i\in U^2$, $i+1\in U^1$.}\end{array}\right.
\end{equation*}
where $s_i(U)$ is the bi-tableau obtained from $U$ by exchanging $i$ and $i+1$, and $k=c_{U}(i)-c_{U}(i+1)$.

There are obvious inclusions $\HH_{n-1}(1,0)\rinto \HH_n(1,0)$, and restriction and induction of irreducible representation has a nice combinatorial description.
\begin{thm}[see \cite{Ho} or {\cite[Theorem 13.6]{Ar2}}]\label{indres}$$\Res^{H_n(1,0)}_{H_{n-1}(1,0)}M^{\ull}=\bigoplus_{\umu}M^{\umu}$$
where the sum is over all bipartitions $\umu$ of $n-1$ that are obtained from $\ull$ by removing one box;
$$\Ind^{\HH_n(1,0)}_{\HH_{n-1}(1,0)}M^{\unu}=\bigoplus_{\ull}M^{\ull}$$
where the sum is over all bipartitions $\ull$ of $n$ that are obtained from $\unu$ by adding one box.
\end{thm}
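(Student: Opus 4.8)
The plan is to reduce both halves of the theorem to the explicit seminormal model for $M^{\ull}$ recalled above, proving the branching rule for $\Res$ by hand and then deducing the one for $\Ind$ by Frobenius reciprocity. First I would pin down the inclusion $\HH_{n-1}(1,0)\rinto\HH_n(1,0)$: the generators $X,T_1,\ldots,T_{n-2}$ of $\HH_n(1,0)$ satisfy among themselves exactly the defining relations of $\HH_{n-1}(1,0)$, so there is an algebra homomorphism $X\mapsto X$, $T_i\mapsto T_i$, and it is injective (e.g.\ because $\HH_n(1,0)$ is free as a left $\HH_{n-1}(1,0)$-module of rank $2n$, which also makes $\Ind^{\HH_n(1,0)}_{\HH_{n-1}(1,0)}$ exact and adjoint to $\Res$). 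Since $\HH_n(1,0)$ and $\HH_{n-1}(1,0)$ are split semisimple by Theorems \ref{ariki} and \ref{arko}, the induction statement will follow formally once restriction is known.

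For the restriction, fix a bipartition $\ull$ of $n$ and work with $M^{\ull}=\CC(\q)\{v_U\mid U\in\T(\ull)\}$. For $U\in\T(\ull)$ the entry $n$ occupies a removable corner of $\ull$; deleting that box gives a standard bi-tableau $U^-:=U\setminus\{n\}$ whose shape $\umu$ is $\ull$ with one box removed, and for each such $\umu$ the map $U\mapsto U^-$ restricts to a bijection $\{U\in\T(\ull)\mid U^-\in\T(\umu)\}\to\T(\umu)$. I claim the span $N_{\umu}$ of the corresponding $v_U$ is an $\HH_{n-1}(1,0)$-submodule and that $v_U\mapsto v_{U^-}$ is an isomorphism $N_{\umu}\cong M^{\umu}$. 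This is checked by inspecting the action formulas: $Xv_U$ depends only on whether $1$ lies in the first or second component, and for $i\le n-2$ the value of $T_iv_U$ depends only on the relative position of $i,i+1$ and on the contents $c_U(i),c_U(i+1)$ — none of which is affected by erasing the box containing $n>i+1$ — while $s_i(U)^-=s_i(U^-)$. Hence each $N_{\umu}$ is stable, the formulas on $N_{\umu}$ coincide verbatim with those defining $M^{\umu}$, and $\Res^{\HH_n(1,0)}_{\HH_{n-1}(1,0)}M^{\ull}=\bigoplus_{\umu}N_{\umu}\cong\bigoplus_{\umu}M^{\umu}$, the sum running over the distinct shapes $\umu$ obtained by removing one box, each occurring once because $\umu$ together with the removed box recovers $\ull$.

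The induction statement is then immediate: freeness of $\HH_n(1,0)$ over $\HH_{n-1}(1,0)$ and split semisimplicity give, for bipartitions $\unu$ of $n-1$ and $\ull$ of $n$,
\[ [\Ind^{\HH_n(1,0)}_{\HH_{n-1}(1,0)}M^{\unu}:M^{\ull}]=\dim\Hom_{\HH_{n-1}(1,0)}\!\big(M^{\unu},\,\Res^{\HH_n(1,0)}_{\HH_{n-1}(1,0)}M^{\ull}\big)=[\Res M^{\ull}:M^{\unu}], \]
which by the previous paragraph equals $1$ when $\ull$ arises from $\unu$ by adding one box and $0$ otherwise; semisimplicity upgrades these multiplicities to the claimed direct sum. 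The only genuine work is the bookkeeping in the middle paragraph, and the main (admittedly minor) obstacle I anticipate is making it manifest that the four-way case split in the definition of $T_iv_U$ (same row / same column / neither, within one tableau, or across the two tableaux) is untouched by deletion of $n$ — which holds precisely because for $i\le n-2$ every case is governed by data internal to $\{1,\ldots,n-1\}$. Alternatively one may simply invoke \cite{Ho} or \cite[Theorem 13.6]{Ar2}, which is essentially this argument.
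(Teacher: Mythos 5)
Your argument is correct. The paper itself gives no proof of Theorem \ref{indres} --- it simply cites \cite{Ho} and \cite[Theorem 13.6]{Ar2} --- and what you have written out (decomposing $M^{\ull}$ according to the position of the box containing $n$, checking stability under $X,T_1,\ldots,T_{n-2}$ from the seminormal formulas, and then obtaining the induction rule from Frobenius reciprocity together with semisimplicity) is exactly the standard argument contained in those references.
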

These restriction and induction rules can be encoded in a Bratteli diagram, as is done in Figure 1 of \cite{HR}.
\subsection{Irreducible representations of $\HH_2(1,0)$}
There are five bipartitions of the number $2$, corresponding to the irreducible representations of $\HH_2(1,0)$:
$$(2,\oo);\quad (11,\oo);\quad (1,1);\quad (\oo, 2);\quad (\oo,11).$$
The dimensions of these are respectively $1,1,2,1,1$. By \eqref{action}, the generators $X$ and $T_1$ act on each of those according to the following table
\begin{center}
\begin{tabular}{|c|c|c|}\hline
$\ull$ & $X$ & $T_1$ \\ \hline
$(2,\oo)$ & $1$ & $\q$ \\ \hline
$(11,\oo)$ & $1$ & $-1$ \\ \hline
$(1,1)$ & $\begin{pmatrix} 1 & 0 \\ 0 & 0 \end{pmatrix}$ & $\begin{pmatrix} 0 & \q-1 \\ 1 & \q \end{pmatrix}$ \\ \hline
$(\oo,2)$ & $0$ & $\q$ \\ \hline
$(\oo,11)$ & $0$ & $-1$ \\ \hline
\end{tabular}
\end{center}
Given this, it is a quick computation to check the following proposition.
\begin{prop}\label{idemp}The central idempotents of $\HH_2(1,0)$ corresponding to the irreducible representations are the following:
\begin{align*} y_{(2,\oo)}&=\q^{-1}(\q+1)^{-1}(T_1XT_1X+XT_1X)  \\
y_{(11,\oo)}&=(\q+1)^{-1}(T_1XT_1X-\q XT_1X)  \\
y_{(1,1)}&=\q^{-1}(\q X +T_1 XT_1+(\q-1)XT_1X-2T_1XT_1X)  \\
 y_{(\oo,2)}&=(\q+1)^{-1}(1-X+T_1-XT_1-T_1X+XT_1X-T_1XT_1+XT_1XT_1)  \\
y_{(\oo,11)}&=(\q+1)^{-1}(\q-\q X-T_1+XT_1+T_1X-XT_1X-\q^{-1}T_1XT_1+\q^{-1}XT_1XT_1)
\end{align*}
\end{prop}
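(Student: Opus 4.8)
\emph{Proof proposal.} The plan is to exploit semisimplicity. By Theorem \ref{ariki} the algebra $\HH_2(1,0)$ is semisimple, so the action on the direct sum of all five irreducibles realizes an isomorphism $\HH_2(1,0)\simeq \prod_{\ull}\End_{\CC(\q)}(M^{\ull})$, the product running over the five bipartitions of $2$. Consequently an element of $\HH_2(1,0)$ is completely determined by the tuple of operators by which it acts on the modules $M^{\ull}$, and an element is the (primitive) central idempotent attached to $M^{\umu}$ if and only if it acts as $\Id$ on $M^{\umu}$ and as $0$ on $M^{\ull}$ for every $\ull\neq\umu$. In particular this criterion already forces centrality, so no separate check of centrality is required.

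First I would tabulate, from the action of $X$ and $T_1$ given in the table above, the operators by which the remaining monomials $XT_1$, $T_1X$, $XT_1X$, $T_1XT_1$, $XT_1XT_1$ act on each of the five modules. On the four one-dimensional modules $(2,\oo)$, $(11,\oo)$, $(\oo,2)$, $(\oo,11)$ this is just arithmetic in $\CC(\q)$, and it simplifies immediately: on $(\oo,2)$ and $(\oo,11)$ one has $X=0$, while on $(2,\oo)$ and $(11,\oo)$ one has $X=1$. On the two-dimensional module $(1,1)$ it amounts to multiplying the explicit $2\times 2$ matrices for $X$ and $T_1$; here the relations $X^2=X$ and $T_1^2=(\q-1)T_1+\q$ keep the monomials short.

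Then, for each of the five expressions $y_{\ull}$, I would substitute these operators and verify the stated values. The terms organize themselves nicely: every term of $y_{(2,\oo)}$, $y_{(11,\oo)}$ and $y_{(1,1)}$ contains a factor $X$, so all three act as $0$ on $(\oo,2)$ and $(\oo,11)$; on $(2,\oo)$ and $(11,\oo)$ one substitutes $X=1$ and reduces to computing scalars in the rank-one Hecke algebra generated by $T_1$; and for $y_{(\oo,2)}$, $y_{(\oo,11)}$ one proceeds similarly. The only genuinely matrix-valued computations are the actions on $(1,1)$, and these are short.

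The argument has no real obstacle; the single point requiring care is the $2\times 2$ matrix bookkeeping for the module $(1,1)$, together with tracking the scalar prefactors $\q^{-1}$ and $(\q+1)^{-1}$. As a consistency check one can observe at the end that $\sum_{\ull}y_{\ull}=1$ and $y_{\ull}y_{\umu}=\delta_{\ull\umu}y_{\ull}$ in $\HH_2(1,0)$, both of which follow automatically from the computed actions on the $M^{\ull}$.
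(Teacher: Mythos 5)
Your proposal is correct and is essentially the paper's own argument: the paper simply tabulates the actions of $X$ and $T_1$ on the five irreducibles and states that the proposition is "a quick computation to check," which is precisely the verification you describe. Your explicit appeal to the Wedderburn isomorphism $\HH_2(1,0)\simeq\prod_{\ull}\End(M^{\ull})$ (justified since $\sum_{\ull}(\dim M^{\ull})^2=8=\dim\HH_2(1,0)$) correctly explains why checking that $y_{\umu}$ acts as $\Id$ on $M^{\umu}$ and as $0$ elsewhere suffices and why centrality comes for free.
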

Let $\ull$ be a bipartition of $2$ and let $I^{\ull}$ be the two sided ideal of $\HH_2(1,0)$ generated by $y_{\ull}$. As a left $\HH_2(1,0)$-module, $I^{\ull}$ is isomorphic to $(M^{\ull})^{\dim M^{\ull}}$.
For any $n\geq 2$ consider the inclusion $\HH_2(1,0)\rinto \HH_n(1,0)$ and the two sided ideal $I_n^{\ull}$ of $\HH_n(1,0)$ generated by $I^{\ull}$. By Theorem \ref{indres}, as left $\HH_n(1,0)$-modules
$$ I_n^{\ull} \simeq \bigoplus_{\ull\subset\umu} \left(\dim M^{\umu}\right)^{\dim M^{\umu}}$$
where the sum is over all bipartitions $\umu$ of $n$ that are obtained by adding $n-2$ boxes to $\ull$.
Since $\HH_n(1,0)$ is semisimple, the following proposition follows immediately.
\begin{prop}\label{cyclrep}$$\HH_n(1,0)/I_n^{\ull}\simeq\bigoplus_{\ull\not\subset\unu}\left(\dim M^{\unu}\right)^{\dim M^{\unu}}$$
as left $\HH_n(1,0)$-modules, where the sum is over all bipartitions $\unu$ that cannot be obtained from $\ull$ by adding boxes. Moreover, as an algebra we have
\begin{equation}\label{bipart}\HH_n(1,0)/I_n^{\ull}\simeq\bigoplus_{\ull\not\subset\unu} \End(M^{\unu})\end{equation}
so that the irreducible representations of $\HH_n(1,0)/I_n^{\ull}$ are parametrized by the set of bipartitions of $n$ that do not contain $\ull$.  
\end{prop}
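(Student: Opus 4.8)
\emph{Proof proposal.} The plan is to use the semisimplicity of $\HH_n(1,0)$ guaranteed by Theorem \ref{ariki}, which yields the Wedderburn decomposition $\HH_n(1,0)\simeq\bigoplus_{\unu}\End(M^{\unu})$, the sum being over all bipartitions $\unu$ of $n$. The structural fact I would build on is that every two-sided ideal of a semisimple algebra is the direct sum of a subcollection of its simple blocks, and that the block $\End(M^{\unu})$ is contained in a given two-sided ideal $J$ if and only if $M^{\unu}$ occurs as a composition factor of $J$ viewed as a left module (equivalently, if and only if the image of a chosen generator of $J$ in that block is nonzero, since each block is simple). Thus the whole statement reduces to identifying exactly which simple modules $M^{\unu}$ appear in $I_n^{\ull}$.

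First I would record that $I_n^{\ull}$ is the two-sided ideal of $\HH_n(1,0)$ generated by the central idempotent $y_{\ull}\in\HH_2(1,0)$ of Proposition \ref{idemp}, i.e.\ $I_n^{\ull}=\HH_n(1,0)\,y_{\ull}\,\HH_n(1,0)$. Hence $\End(M^{\unu})\subset I_n^{\ull}$ precisely when the image of $y_{\ull}$ in $\End(M^{\unu})$, obtained by restricting $M^{\unu}$ along $\HH_2(1,0)\rinto\HH_n(1,0)$, is nonzero. Next I would invoke the iterated restriction rule of Theorem \ref{indres}: $M^{\unu}\big|_{\HH_2(1,0)}$ is a sum of copies of exactly those $M^{\umu}$ (with $\umu$ a bipartition of $2$) obtained from $\unu$ by successively removing $n-2$ boxes while staying a bipartition, which is the same as the containment $\umu\subset\unu$. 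Since $y_{\ull}$ is the central idempotent of $\HH_2(1,0)$ projecting onto the $M^{\ull}$-isotypic component of any module, it acts nonzero on $M^{\unu}\big|_{\HH_2(1,0)}$ if and only if $M^{\ull}$ occurs in that restriction, i.e.\ if and only if $\ull\subset\unu$.

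Assembling these steps gives $I_n^{\ull}=\bigoplus_{\ull\subset\unu}\End(M^{\unu})$ as an algebra; as a left module this is $\bigoplus_{\ull\subset\unu}(M^{\unu})^{\oplus\dim M^{\unu}}$, recovering the formula stated just before the proposition. Passing to the complementary blocks then yields $\HH_n(1,0)/I_n^{\ull}\simeq\bigoplus_{\ull\not\subset\unu}\End(M^{\unu})$, from which the left-module decomposition and the parametrization of irreducibles by bipartitions of $n$ not containing $\ull$ are immediate.

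I do not expect a real obstacle: the only point needing a little care is the bookkeeping in the restriction step, namely that ``obtained from $\unu$ by successively removing $n-2$ boxes, always remaining a bipartition'' coincides with the componentwise-containment order $\umu\subset\unu$; this is the standard fact that a skew bipartition admits a standard filling. Alternatively one can bypass the idempotent altogether and combine the already-established left-module identity $I_n^{\ull}\simeq\bigoplus_{\ull\subset\umu}(M^{\umu})^{\oplus\dim M^{\umu}}$ with the block decomposition of two-sided ideals in a semisimple algebra, which makes the argument essentially a one-line deduction — presumably the reason the text asserts it ``follows immediately.''
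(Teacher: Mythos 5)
Your proposal is correct and follows essentially the same route as the paper: the text establishes the left-module decomposition $I_n^{\ull}\simeq\bigoplus_{\ull\subset\umu}(M^{\umu})^{\oplus\dim M^{\umu}}$ via Theorem \ref{indres} just before the proposition and then declares the rest immediate from semisimplicity, which is exactly the one-line deduction you describe in your final paragraph. Your more detailed main argument (identifying the blocks on which the central idempotent $y_{\ull}$ acts nonzero via the restriction rule) is just the Frobenius-reciprocal phrasing of the same computation and correctly fills in the standard facts the paper leaves implicit.
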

\subsection{$\R_n$ as a quotient}\label{rnquot}
One of the main results of this paper is that the mirabolic Hecke algebra is a quotient of a cyclotomic Hecke algebra.
\begin{thm}$$\R_n\simeq \HH_n(1,0)/I_n^{(\oo,2)}$$
\end{thm}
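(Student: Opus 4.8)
The plan is to exhibit an explicit surjective algebra homomorphism $\varphi\colon \HH_n(1,0)\to \R_n$ and then show that its kernel is exactly $I_n^{(\oo,2)}$, by a dimension count. The natural candidate for $\varphi$ is dictated by the presentation in Theorem~\ref{solomon}: send $T_i\mapsto T_{s_i,\oo}$ for $i=1,\ldots,n-1$ (which already identifies $\HH_n\subset\R_n$), and send the cyclotomic generator $X$ to some element built from $T_0=T_{e,\{1\}}$. Since $X^2=X$ is required but $T_0$ satisfies $T_0^2=(\q-2)T_0+(\q-1)=(T_0-(\q-1))(T_0+1)$, the right choice is the idempotent $X\mapsto \q^{-1}(T_0+1)$ (one checks $(\q^{-1}(T_0+1))^2 = \q^{-2}(T_0^2+2T_0+1)=\q^{-2}(\q T_0 + \q) = \q^{-1}(T_0+1)$). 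First I would verify that this assignment respects all the defining relations of $\HH_n(1,0)$: the braid and commutation relations among the $T_i$ are inherited from $\HH_n$; the relations $XT_i=T_iX$ for $i\geq 2$ follow from $T_0T_i=T_iT_0$ for $i\geq 2$ (relation~\eqref{rel6}); and the key relation $XT_1XT_1=T_1XT_1X$ must be deduced from the mixed relations~\eqref{rel4} and~\eqref{rel5} of Theorem~\ref{solomon}. This last check is a finite computation in the subalgebra generated by $T_0,T_1$: rewrite $XT_1XT_1$ and $T_1XT_1X$ in the $T_0,T_1$ basis, substitute $X=\q^{-1}(T_0+1)$, and use \eqref{rel1}, \eqref{rel2}, \eqref{rel4}, \eqref{rel5} to see both sides coincide.

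Once $\varphi$ is a well-defined homomorphism, surjectivity is automatic: the image contains all $T_{s_i,\oo}$, hence all of $\HH_n$, and it contains $T_0$; by Solomon's presentation (Theorem~\ref{solomon}) these generate $\R_n$. Next I would show $I_n^{(\oo,2)}\subseteq \Ker\varphi$. It suffices to show $\varphi(y_{(\oo,2)})=0$, since $I_n^{(\oo,2)}$ is the two-sided ideal generated by $y_{(\oo,2)}$. Using the explicit formula for $y_{(\oo,2)}$ in Proposition~\ref{idemp} and substituting $X=\q^{-1}(T_0+1)$, $T_1=T_{s_1,\oo}$, one reduces to checking that a specific element of the rank-? subalgebra of $\R_2$ generated by $T_{e,\{1\}}$ and $T_{s_1,\oo}$ vanishes; alternatively, and more cleanly, I would argue representation-theoretically: by Proposition~\ref{irrep} the irreducible $\R_n$-modules $M_n^{\theta,k}$ have $T_0$ acting with eigenvalues $\q-1$ and $-1$, hence $X=\q^{-1}(T_0+1)$ acts with eigenvalues $1$ and $0$ — so on each $M_n^{\theta,k}$, pulled back via $\varphi$, the generator $X$ acts as a genuine idempotent, and comparing with the $\HH_n(1,0)$-module structure of~\eqref{action} one identifies $\varphi^*M_n^{\theta,k}$ with the $\HH_n(1,0)$-module whose bipartitions have all of $\{1,\ldots,k\}$-worth of boxes... the point is that the bipartition $(\oo,2)$ never occurs as a constituent, so $y_{(\oo,2)}$ annihilates every $\varphi^*M_n^{\theta,k}$, forcing $\varphi(y_{(\oo,2)})\in\bigcap \Ker(\text{irreps of }\R_n)=0$ by semisimplicity of $\R_n$.

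Finally I would close the argument by dimension count. By Proposition~\ref{cyclrep}, $\dim \HH_n(1,0)/I_n^{(\oo,2)} = \sum_{\unu\not\supseteq(\oo,2)}(\dim M^{\unu})^2$, where the sum is over bipartitions $\unu=(\nu^1,\nu^2)$ of $n$ such that $\nu^2$ has at most one box, i.e. $\nu^2=\oo$ or $\nu^2=(1)$. For $\nu^2=\oo$ we get $\sum_{\theta\vdash n} f_\theta^2 = n!$ and for $\nu^2=(1)$ we get $\sum_{k=0}^{n-1}\binom{n}{k}$-weighted terms; matching this against Siegel's dimension $\dim\R_n=\sum_{k=0}^n k!\binom{n}{k}^2$ together with $\dim M_n^{\theta,k}=\binom{n}{k}f_\theta$ (so $\dim\R_n=\sum_k \binom{n}{k}^2\sum_{\theta\vdash k}f_\theta^2 = \sum_k\binom{n}{k}^2 k!$), I would verify the two quantities agree. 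Since $\varphi$ descends to a surjection $\HH_n(1,0)/I_n^{(\oo,2)}\twoheadrightarrow\R_n$ between algebras of equal (finite) dimension over $\CC(\q)$, it is an isomorphism. The main obstacle I anticipate is the relation-checking for $X\mapsto \q^{-1}(T_0+1)$, specifically deriving $XT_1XT_1=T_1XT_1X$ from the asymmetric-looking relations~\eqref{rel4}–\eqref{rel5}; and, secondarily, making the bipartition bookkeeping in the dimension count match precisely, which requires correctly identifying which bipartitions index the surviving constituents.
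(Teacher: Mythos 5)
Your overall strategy is sound and its first half coincides with the paper's: the assignment $X\mapsto \q^{-1}(T_0+1)$, $T_i\mapsto T_{s_i,\oo}$ is exactly the substitution $e=\q^{-1}(T_0+1)$ used in the paper; the verification of $XT_1XT_1=T_1XT_1X$ from \eqref{rel4}--\eqref{rel5} is the content of relations \eqref{nrel6}--\eqref{nrel7} of Lemma \ref{newpres} (the paper gets \eqref{nrel6} for free by applying the anti-involution $^\star$ to \eqref{nrel7}); and the identification $\varphi(y_{(\oo,2)})=0$ is precisely the observation that \eqref{nrel7} is equivalent to $y_{(\oo,2)}=0$. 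Of your two options for that last point, the direct computation is the one that works; the representation-theoretic alternative is close to circular, since identifying the $\HH_n(1,0)$-constituents of $\varphi^*M_n^{\theta,k}$ is essentially Proposition \ref{isoiso}, which the paper deduces \emph{from} the theorem. Where you genuinely diverge is the endgame: the paper shows that the new relations imply Solomon's relations back, so that \eqref{nrel1}--\eqref{nrel7} form a complete presentation of $\R_n$ and the isomorphism is immediate with no dimension count; you instead invoke surjectivity plus equality of dimensions, which is also a legitimate route and arguably shorter, provided the count is done correctly.

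It is not done correctly as written. The bipartitions $\unu=(\nu^1,\nu^2)$ of $n$ with $(\oo,2)\not\subseteq\unu$ are those for which $\nu^2$ contains no row of length $2$, i.e.\ $\nu^2$ is a single column $1^{n-k}$ of \emph{arbitrary} length, not those with $\nu^2\in\{\oo,(1)\}$ (for instance $(\oo,111)$ does not contain $(\oo,2)$). With your restricted index set the total is $\sum_{\theta\vdash n}f_\theta^2+n^2\sum_{\theta\vdash n-1}f_\theta^2=n!+n\cdot n!=(n+1)!$, which already disagrees with $\dim\R_2=7$ at $n=2$, so the surjection-between-equal-dimensions argument would not close. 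With the correct index set one gets $\sum_{k=0}^{n}\sum_{\theta\vdash k}\bigl(\tbinom{n}{k}f_\theta\bigr)^2=\sum_{k=0}^{n}\tbinom{n}{k}^2 k!$, which matches Siegel's dimension of $\R_n$. So the gap is a concrete miscount of the surviving bipartitions; once repaired, your proof goes through.
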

\begin{proof}In order to prove this, we need a different presentation of $\R_n$. Let $e=\q^{-1}(T_0+1)$.
\begin{lem}\label{newpres}The algebra $\R_n$ is generated by $e,T_1,\ldots, T_{n-1}$. These generators satisfy the following relations.
\begin{align} \label{nrel1} T_i^2 & = (\q-1)T_i+\q &  i\geq 1\\
\label{nrel2} T_iT_{i+1}T_i& =T_{i+1}T_iT_{i+1} &  i\geq 1 \\
\label{nrel3} T_iT_j&=T_jT_i & |i-j|\geq 2 \\
\label{nrel4}e^2&=e & \\
\label{nrel5}eT_i&=T_ie & i\geq 2 \\
\label{nrel6}eT_1eT_1&=T_1eT_1e & \\
\label{nrel7}eT_1eT_1& =T_1eT_1-eT_1e+T_1e+eT_1+e-T_1-1 &
\end{align}
\end{lem}
\begin{proof}[Proof of Lemma]Since $T_0=\q e-1$, clearly $\R_n$ is generated by $e,T_1,\ldots,T_{n-1}$. The quadratic and braid relations \eqref{nrel1}-\eqref{nrel3} are the same as before.

For \eqref{nrel4}, we use \eqref{rel1}:
\begin{align*} e^2 &=\q^{-2}(T_0+1)^2 \\ 
&= \q^{-2}(T_0^2+2T_0+1) \\
&=\q^{-2}((\q-2)T_0+\q-1)+2T_0+1)\\
&=\q^{-2}\q(T_0+1) \\
&= \q^{-1}(T_0+1)=e. \end{align*}
The commutation relation \eqref{nrel5} follows from \eqref{rel6}, since $T_0$ commutes with $T_i$, $i\geq 2$.

To show relations \eqref{nrel7}, we use \eqref{rel4}.
\begin{align*}eT_1eT_1 &=\q^{-2}(T_0+1)T_1(T_0+1)T_1 \\
&=\q^{-2}(T_0T_1T_0T_1+T_1T_0T_1+T_0T_1^2+T_1^2) \\
&=\q^{-2}(((\q-1)T_1T_0T_1+(\q-1)T_1T_0-T_0T_1T_0)+ \\
&\quad +T_1T_0T_1+T_0((\q-1)T_1+\q)+((\q-1)T_1+\q) ) \\
&=\q^{-2}(\q T_1T_0T_1-T_0T_1T_0+(\q-1)T_1T_0+(\q-1)T_0T_1+ \\
&\quad +\q T_0+(\q-1)T_1+\q) \\
&=\q^{-2}(\q T_1(\q e-1)T_1-(\q e-1)T_1(\q e-1)+(\q-1)T_1(\q e-1) \\
&\quad +(\q-1)(\q e-1)T_1+ \q(\q e-1)+(\q-1)T_1+\q) \\
&=\q^{-2}(\q^2T_1eT_1-\q T_1^2-\q^2eT_1e+\q T_1e+\q eT_1-1+\q(\q-1)T_1e-(\q-1)T_1 \\
&\quad + \q (\q -1)eT_1-(\q -1)T_1+\q^2e-\q +(\q-1)T_1+\q )\\
&=T_1eT_1-eT_1e+T_1e+eT_1+e-T_1-1 \end{align*}
Finally, we could do an analogous computation for \eqref{nrel6} or simply remark that, since $e^\star=e$, applying the anti-involution to both sides of \eqref{nrel7} we get
\begin{align*}T_1eT_1e &= (eT_1eT_1)^\star \\
&=( T_1eT_1-eT_1e+T_1e+eT_1+e-T_1-1 )^\star \\
&= T_1eT_1-eT_1e+eT_1+T_1e+e-T_1-1  \\
&= eT_1eT_1. \end{align*}
\end{proof}

\begin{lem}The relations in Lemma \ref{newpres} are equivalent to the relations in Theorem \ref{solomon}, hence that is a new presentation for $\R_n$.\end{lem}
\begin{proof}[Proof of Lemma]We need to show that relations \eqref{rel1} and \eqref{rel4}-\eqref{rel6} follow from relations \eqref{nrel1}-\eqref{nrel7}, if we let $T_0=\q e -1$. The last relation \eqref{rel6} is clear.
For \eqref{rel1}, we have 
\begin{align*}T_0^2&=(\q e -1)^2\\
&= \q^2 e^2 -2\q e+1 \\
&= \q^2 e -2\q e +1 \\
&= \q(\q -2)(\q^{-1}(T_0+1))+1 \\
&= (\q-2)T_0+\q-2+1\\
&=(\q-2)T_0+\q-1.
\end{align*}
Now, we check relation \eqref{rel4}.
\begin{align*}T_0T_1T_0T_1&=(\q e -1)T_1(\q e-1)T_1 \\
&= \q^2eT_1eT_1-\q T_1eT_1-\q e T_1^2+T_1^2 \\
\text{by \eqref{nrel7}}&=\q^2(T_1eT_1-eT_1e+T_1e+eT_1+e-T_1-1)-\q T_1eT_1+ \\
&\quad -\q(\q-1) eT_1-\q^2 e+(\q-1)T_1+\q \\
&= \q(\q-1)T_1eT_1-\q^2eT_1e+\q^2T_1e+\q eT_1+(\q-1-\q^2)T_1+\q-\q^2\\
&=(\q-1)T_1(T_0+1)T_1-(T_0+1)T_1(T_0+1)+\q T_1(T_0+1)+(T_0+1)T_1 \\
&\quad +(\q-1-\q^2)T_1+\q-\q^2  \\
&= (\q-1)T_1T_0T_1+(\q-1)T_1^2-T_0T_1T_0-T_1T_0-T_0T_1-T_1+ \\
&\quad+\q T_1T_0+\q T_1+T_0T_1+T_1+(\q-1-\q^2)T_1+\q-\q^2 \\
&= (\q-1)T_1T_0T_1+(\q-1)T_1T_0-T_0T_1T_0.
\end{align*}
To show \eqref{rel5} we can either do a similar computation to the one we just did or we can apply the anti-involution $^\star$ to both sides of \eqref{rel4} to obtain
\begin{align*}T_1T_0T1T_0&=(T_0T_1T_0T_1)^\star \\
&=(\q-1)(T_1T_0T_1+T_1T_0)^\star-(T_0T_1T_0)^\star \\
&=(\q-1)(T_1T_0T_1+T_0T_1)-T_0T_1T_0.
\end{align*}
\end{proof}
Now, to conclude the proof of the theorem, we compare the relations in Lemma \ref{newpres} with the relations in Definition \ref{cyclo}, taking $e=X$. Clearly \eqref{nrel1}-\eqref{nrel3}, \eqref{nrel5} and \eqref{nrel6} are the same in both. Since we are specialized to $H_n(1,0)$, \eqref{nrel4} is the same as \eqref{xeqe}. All that is left is \eqref{nrel7}, which is equivalent to
$$eT_1eT_1-T_1eT_1+eT_1e-T_1e-eT_1-e+T_1+1=0$$
which, from \eqref{idemp} is the same as
$$y_{(\oo,2)}=0$$
and the result follows.
\end{proof}
We can now recover the results of Siegel on the representation theory of $\R_n$ as a consequence.
First of all, notice that the bipartitions of $n$ parametrizing the irreducible representations of  $\HH_n(1,0)/I_n^{(\oo,2)}$ are, as seen in \eqref{bipart}, the ones that do not contain $(\oo,2)$. If we denote by $a^b$ the partition $(aa\ldots a)$ where $a$ appears $b$ times, then these are exactly the ones of the form $\umu=(\mu^1, 1^{n-k})$, with $\mu^1\vdash k$. As Young diagrams, this means that the second diagram is a single column. There is an obvious bijection between this set and the set $\{\theta\vdash k|0\leq k\leq n\}$ that we used in Section \ref{irrepsrn}, given by
$$ \theta\vdash k\longleftrightarrow (\theta,1^{n-k}).$$
\begin{prop}\label{isoiso}There is an isomorphism of irreducible modules over $\R_n\simeq \HH_n(1,0)/I_n^{(\oo,2)}$
$$ M^{\theta,k}_n\simeq M^{(\theta,1^{n-k})}$$
given, on the basis $\{T_w\otimes v_T\}$ where $w\in S_n/(S_k\times S_{n-k})$ such that $w$ is the element with minimal length in the coset, by
\begin{equation}\label{modiso}T_w\otimes v_T\mapsto v_{(U^1,U^2)}.\end{equation}
Here $U^1=w(T)$ is the standard tableau of shape $\theta$ whose boxes contain the numbers $\{w(1),\ldots, w(k)\}$ arranged so that $w(l)$ is in the same spot as $l$ is in $T$. Then $U^2$ is the standard tableau consisting of a single column with the numbers $w(k+1),\ldots,w(n)$. (Remark that $(U^1,U^2)$ as defined is standard because $w$ is the shortest element in the coset, which means that $w(1)<\ldots <w(k)$ and $w(k+1)<\ldots<w(n)$.)
\end{prop}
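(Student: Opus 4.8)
The plan is to verify that the proposed map \eqref{modiso} is a well-defined isomorphism of $\R_n$-modules by checking that it intertwines the actions described in Proposition \ref{irrep} (for $M^{\theta,k}_n$) and the Ariki--Koike action \eqref{action} (for $M^{(\theta,1^{n-k})}$), under the identification $\R_n\simeq\HH_n(1,0)/I_n^{(\oo,2)}$ of the previous theorem, with $X=e=\q^{-1}(T_0+1)$. First I would observe that the target space has a basis indexed by standard bi-tableaux of shape $(\theta,1^{n-k})$, and that the recipe $(U^1,U^2)=(w(T),\text{column of }w(k+1),\ldots,w(n))$ sets up a bijection between such bi-tableaux and pairs $(w,T)$ with $w$ a minimal-length coset representative for $S_n/(S_k\times S_{n-k})$ and $T\in\T(\theta)$: the point is that a standard bi-tableau of this shape is exactly a partition of $\{1,\ldots,n\}$ into a $k$-element subset (arranged as a standard tableau of shape $\theta$) and its complement (arranged in increasing order down the single column), and the $k$-subset together with the tableau structure on it encodes precisely the data $(w,T)$. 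Hence \eqref{modiso} is at least a linear bijection; it remains to check equivariance.

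The verification splits by generator. For $T_0$: the content formula for bi-tableaux gives $c_U(i)=c_{U^j}(i)+(j-1)$, and \eqref{action} says $Xv_U=v_U$ if $1\in U^1$ and $Xv_U=0$ if $1\in U^2$. Translating via $T_0=\q e-1=\q X-1$, we get $T_0v_U=(\q-1)v_U$ when $1\in U^1$ and $T_0v_U=-v_U$ when $1\in U^2$; and $1\in U^1$ exactly when $1\in\{w(1),\ldots,w(k)\}$, i.e. $w^{-1}(1)\in\{1,\ldots,k\}$. This matches the $T_0$-action in Proposition \ref{irrep} on the nose. For the generators $T_i$ with $i\geq 1$, I would compare the $\HH_n$-module structure: Proposition \ref{irrep} identifies $M^{\theta,k}_n$ with $\HH_n\otimes_{\HH_{(k)}\otimes\HH_{(n-k)}}V^\theta\boxtimes\CC_{\mathrm{sign}}$. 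On the Ariki--Koike side, restricting \eqref{action} to the subalgebra generated by $T_1,\ldots,T_{n-1}$, the basis vector $v_{(w(T),\mathrm{col})}$ behaves as follows: $T_i$ acting within the $k$-subset reproduces the seminormal representation of $\HH_{(k)}$ on $V^\theta$ (transported by $w$), $T_i$ acting within the complement subset reproduces the sign representation of $\HH_{(n-k)}$ (since $i,i+1$ in the same column of $U^2$ gives $T_iv_U=-v_U$), and the remaining $T_i$ — those swapping an element of the $k$-subset with an element of the complement — implement exactly the coset-representative bookkeeping, i.e. they move $w$ to $s_iw$ (or a combination) in the induced module. So the map is $\HH_n$-equivariant, and combined with the $T_0$-check it is $\R_n$-equivariant. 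Since both modules are irreducible of the same dimension $\binom{n}{k}f_\theta$, any nonzero equivariant map is an isomorphism, so one could alternatively shortcut the end of the argument.

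The main obstacle I anticipate is the bookkeeping for the ``mixed'' generators $T_i$: precisely matching the last two cases of the Ariki--Koike formula \eqref{action} (the $i\in U^1,i+1\in U^2$ case giving $T_iv_U=v_{s_i(U)}$, and the $i\in U^2,i+1\in U^1$ case giving $(\q-1)v_U+\q v_{s_i(U)}$) against how $T_i$ acts on $T_w\otimes v_T$ in the induced module $\HH_n\otimes_{\HH_{(k)}\otimes\HH_{(n-k)}}V^\theta\boxtimes\CC_{\mathrm{sign}}$ when $T_iT_w$ is no longer a minimal coset representative and must be rewritten using the quadratic relation and the sign action on the $\HH_{(n-k)}$-factor. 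This is exactly the kind of computation the statement alludes to (``With some similar computations to the ones that we will do in the proof of Proposition \ref{isoiso}'') and it is routine but needs care with reduced words and the choice of minimal representatives; once it is in place, the $T_0$ part and the dimension count finish the proof immediately.
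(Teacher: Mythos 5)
Your proposal is correct and follows essentially the same route as the paper: establish the bijection of bases, match the $e=X$ (equivalently $T_0$) action using Proposition \ref{irrep}, and then split the $T_i$-check into the same three cases (both indices in $U^1$ giving the seminormal action, both in $U^2$ giving the sign action via $T_iv_U=-v_U$, and the two mixed cases handled by whether $\ell(s_iw)$ increases or decreases). The paper carries out the mixed-case bookkeeping you flag as the main obstacle exactly as you describe, using $s_iw=ws_{w^{-1}(i)}$ and the quadratic relation when $\ell(s_iw)<\ell(w)$; your irreducibility-plus-dimension shortcut is a valid alternative ending but is not needed.
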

\begin{exa}If $w=(1234)=s_1s_2s_3\in S_5$ and $T=\young(13,2)$, then 
$$U=(U^1,U^2)=\left(\young(24,3),\young(1,5)\right).$$
\end{exa}
\begin{proof}[Proof of Proposition \ref{isoiso}]It is clear that the map \eqref{modiso} sends a basis of $M^{\theta,k}_n$ to a basis of $M^{(\theta,1^{n-k})}$, so it is injective and surjective. We need to show that it intertwines the action of $\R_n$. Since $e=\q^{-1}(T_0+1)$, we have, by Proposition \ref{irrep}, that
$$e\cdot(T_w\otimes v_T)=\left\{\begin{array}{cl} T_w\otimes v_T & \text{ if }w^{-1}(1)\in\{1,\ldots,k\} \\
0 & \text{ if }w^{-1}(1)\in\{k+1,\ldots,n\}\end{array}\right. $$
Under the map \eqref{modiso}, $w^{-1}(1)\in\{1,\ldots,k\}$ if and only if $1\in U^1$, and $w^{-1}(1)\in\{k+1,\ldots,n\}$ if and only if $1\in U^2$, so the action of $e$ is the same as the action of $X$ in \eqref{action}.

To conclude we need to check the action of the $T_i$'s. There will be three separate cases. 

Case 1: suppose that $w^{-1}(i)$ and $w^{-1}(i+1)$ are both in $\{1,\ldots,k\}$ or both in $\{k+1,\ldots,n\}$. Then since $w$ is the shortest element in the coset, we have that $l(s_iw)>l(w)$, and $s_iw=ws_{w^{-1}(i)}$, hence
\begin{align*}T_i\cdot(T_w\otimes v_T)&= T_iT_w\otimes v_T \\
&= T_{s_{i}w}\otimes v_T \\
&= T_{ws_{w^{-1}(i)}} \otimes v_T \\
&= T_w \otimes T_{s_{w^{-1}(i)}}\cdot v_T
\end{align*}
Interpreting the action $T_{s_{w^{-1}(i)}}\cdot v_T$ as the seminormal representation of $\HH_n$ as in Section \ref{seminorm}, we see that after applying the isomorphism \eqref{modiso}, this agrees with the cases in \eqref{action} where $i,i+1$ are in the same tableau.

Case 2: $w^{-1}(i)\in\{1,\ldots,k\}$, $w^{-1}(i+1)\in\{k+1,\ldots,n\}$. In this case we have that $l(s_iw)>l(w)$ and $s_iw$ is again the shortest element in its coset, therefore
\begin{align*} T_i\cdot(T_w\otimes v_T)&= T_iT_w\otimes v_T \\
&= T_{s_{i}w}\otimes v_T \end{align*}
and by \eqref{modiso} this is mapped to $v_{(s_iw(T),U^2)}=v_{s_i(U^1,U^2)}$ which agrees with \eqref{action} when $i\in U^1$ and $i+1\in U^2$.

Case 3: $w^{-1}(i+1)\in\{1,\ldots,k\}$, $w^{-1}(i)\in\{k+1,\ldots,n\}$. In this case we have that $l(s_iw)<l(w)$ and $s_iw$ is again the shortest element in its coset, so
\begin{align*} T_i\cdot(T_w\otimes v_T)&= T_iT_w\otimes v_T \\
&=\left((\q-1)T_w+\q T_{s_{i}w}\right)\otimes v_T \\
& =(\q-1)T_w\otimes v_T+ \q  T_{s_{i}w}\otimes v_T \end{align*}
which maps to $(\q-1)v_U+\q v_{s_i(U)}$ and agrees with \eqref{action} when $i\in U^2$ and $i+1\in U^1$.
\end{proof}
\begin{rem}From Proposition \ref{isoiso} and Theorem \ref{indres} we also immediately recover the rule for restriction of Proposition \ref{siegelres}.\end{rem}
\section{Comparison with the $q$-Rook algebra}\label{qrook}
The $q$-Rook algebra was first introduced by Solomon in \cite{So2} as a double coset algebra. Considering the monoid $M_n(\Fq)$ of $n\times n$ matrices over the finite field with $q$ elements and the Borel subgroup $B$ of upper triangular invertible matrices, we get the $q$-Rook algebra $\II_n(q)$ as the algebra of double cosets $B\backslash M_n(\Fq)/B$. For generic values of the parameter $q$, it is a semisimple algebra and its irreducible representations were described combinatorially by Halverson in \cite{Ha}. Halverson and Ram then showed in \cite{HR} that $\II_n(q)$ is a quotient of $\HH_n(1,0;q)$ and that the description of the irreducible representations of \cite{Ha} followed from the representation theory of $\HH_n(1,0;q)$ analogously to what we just saw in Section \ref{rnquot}. We will summarize those results in the following statement, keeping the notation of Section \ref{rnquot}.
\begin{thm}[See Theorem 1.10 and Corollary 2.21 in \cite{HR}]
$$\II_n(q)\simeq \HH_n(1,0;q)/I_n^{(\oo,11)}.$$
When $[n]_q!\neq 0$, then $\II_n(q)$ is a semisimple algebra and its irreducible representations are parametrized by the set of bipartitions that do not contain $(\oo,11)$. This is the set
$$\{(\lambda^1,\lambda^2)|\lambda^2 \text{ is a single row}\}.$$
The representations are given explicitly by the formulas \eqref{action}.
\end{thm}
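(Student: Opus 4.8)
The plan is to mirror the argument of Section~\ref{rnquot}, interchanging the roles of rows and columns; indeed the statement is proved in \cite{HR} (Theorem~1.10 and Corollary~2.21), and I would follow that proof closely.

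The first step is to produce a presentation of the $q$-rook algebra $\II_n(q)$ with an idempotent generator. Solomon's original description in \cite{So2} (see also \cite{Ha}) gives $\II_n(q)$ by generators $T_1,\ldots,T_{n-1}$, which generate a copy of $\HH_n(q)$, together with one extra generator coming from the non-invertible rook (i.e.\ partial permutation) matrices. Replacing that extra generator by a suitable affine combination $e$ of it, chosen so that $e^2=e$, one rewrites Solomon's relations in terms of $e,T_1,\ldots,T_{n-1}$. The outcome should be precisely the relations of Definition~\ref{cyclo} for $\HH_n(1,0;\q)$, with $X=e$, together with one further relation involving only $e$ and $T_1$. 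The content of this step is to check, exactly as in Lemma~\ref{newpres}, that this extra relation is equivalent to $y_{(\oo,11)}=0$, where $y_{(\oo,11)}$ is the central idempotent of $\HH_2(1,0)$ computed in Proposition~\ref{idemp}; this identifies $\II_n(q)$ with $\HH_n(1,0;q)/I_n^{(\oo,11)}$.

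Granting that isomorphism, the remaining assertions are formal consequences of Section~\ref{cycloha}. By Theorem~\ref{ariki} the specialization $\HH_n(1,0;q)$ is semisimple exactly when $[n]_q!\neq 0$ (the condition $q^d u_i\neq u_j$ for $i\neq j$ being automatic here, since $\{u_1,u_2\}=\{1,0\}$ and $q\in\CC^\times$), and a quotient of a semisimple algebra is semisimple, so $\II_n(q)$ is semisimple under that hypothesis. By Proposition~\ref{cyclrep} and \eqref{bipart}, the irreducible representations of $\HH_n(1,0;q)/I_n^{(\oo,11)}$ are exactly the modules $M^{\ull}$ for the bipartitions $\ull=(\lambda^1,\lambda^2)$ of $n$ not containing $(\oo,11)$, that is, those with $\lambda^2$ a single row, with the action given by the formulas~\eqref{action}.

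The main obstacle is the first step: extracting a clean presentation of $\II_n(q)$ from \cite{So2}/\cite{Ha} and carrying out the somewhat lengthy algebraic manipulation that rewrites it in the form of Lemma~\ref{newpres} with the relation $y_{(\oo,11)}=0$ in place of $y_{(\oo,2)}=0$. Once that computation is in hand---and it is carried out in \cite{HR}---the rest of the theorem follows as above with no additional work. In particular, the only place where the distinction between $R_n$ and $\II_n(q)$ enters is in \emph{which} central idempotent $y_{\ull}$ of $\HH_2(1,0)$ is forced to vanish.
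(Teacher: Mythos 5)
The paper offers no proof of this theorem --- it is stated purely as a citation of Theorem 1.10 and Corollary 2.21 of \cite{HR} --- and your proposal, which defers the key computational step (rewriting Solomon's presentation of $\II_n(q)$ so that the extra relation becomes $y_{(\oo,11)}=0$) to \cite{HR} while correctly deducing the semisimplicity and the parametrization of irreducibles from Theorem \ref{ariki} and Proposition \ref{cyclrep}, is consistent with the paper's treatment and faithfully mirrors the argument of Section \ref{rnquot}. No gaps; this matches the intended approach.
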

To be consistent with our previous notation, let us denote by $\II_n$ the generic version of the $q$-Rook algebra over the field $\CC(\q)$.
\begin{prop}There is an algebra isomorphism $$\II_n\simeq \R_n$$
and similarly for the semisimple specializations (when $[n]_q!\neq 0$)
$$\II_n(q)\simeq \R_n(q).$$
\end{prop}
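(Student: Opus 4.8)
The plan is to exhibit an explicit abstract isomorphism between the two cyclotomic quotients $\HH_n(1,0)/I_n^{(\oo,2)}$ and $\HH_n(1,0)/I_n^{(\oo,11)}$, and then compose with the two isomorphisms $\R_n\simeq\HH_n(1,0)/I_n^{(\oo,2)}$ (Theorem in Section \ref{rnquot}) and $\II_n\simeq\HH_n(1,0)/I_n^{(\oo,11)}$ (Theorem of \cite{HR}). The natural candidate for the intermediate isomorphism is induced by the standard automorphism of $\HH_n(1,0)$ that simultaneously sends $\q\mapsto\q^{-1}$ and $T_i\mapsto -\q^{-1}T_i$ (equivalently $T_i\mapsto\q T_i^{-1}$, which swaps the two eigenvalues $\q$ and $-1$ of each $T_i$), together with $X\mapsto X$; this is a $\CC$-algebra automorphism (not $\CC[\q,\q^{-1}]$-linear, but that is harmless for the statement about $\R_n$ and $\II_n$ over $\CC(\q)$, and over $\CC$ for the specializations it is just the isomorphism $\HH_n(1,0;q)\simeq\HH_n(1,0;q^{-1})$). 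On irreducible modules $M^{\ull}$ this automorphism transposes each component partition $\lambda^i\mapsto(\lambda^i)^t$; in particular it exchanges the bipartitions $(\theta,1^{n-k})$ (second component a single column) with $(\theta^t,(n-k))$ (second component a single row), so it carries the set of bipartitions not containing $(\oo,2)$ bijectively onto the set not containing $(\oo,11)$, hence sends the ideal $I_n^{(\oo,2)}$ to $I_n^{(\oo,11)}$ and descends to the desired quotient isomorphism.

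Concretely, the steps I would carry out are: (1) Recall/verify the bar-type automorphism $\sigma$ of $\HH_n(1,0)$ defined on generators by $T_i\mapsto -\q^{-1}T_i$, $X\mapsto X$, $\q\mapsto\q^{-1}$, checking it preserves all the relations of Definition \ref{cyclo} with $(u_1,u_2)=(1,0)$ — the quadratic relation $T_i^2=(\q-1)T_i+\q$ becomes $\q^{-2}T_i^2 = (\q^{-1}-1)(-\q^{-1}T_i)+\q^{-1}$, which rearranges to the original relation; the braid, commutation, $XT_i=T_iX$, $XT_1XT_1=T_1XT_1X$, and $X^2=X$ relations are each either unchanged or trivially rescaled. (2) Identify the effect of $\sigma$ on the classification of irreducibles: by semisimplicity and Theorem \ref{arko}, $\sigma$ permutes the blocks $\End(M^{\ull})$ of $\HH_n(1,0)$, and comparing the seminormal action formulas \eqref{action} before and after twisting by $\sigma$ shows $M^{\ull}\circ\sigma\simeq M^{\ull^t}$ where $\ull^t=((\lambda^1)^t,(\lambda^2)^t)$ — the point being that swapping the two eigenvalues of the $T_i$ interchanges the roles of "same row" and "same column" and, in the mixed cases, interchanges the two inhomogeneous formulas, while $X$ (hence the content shift by $j-1$) is untouched. (3) Deduce $\sigma(I_n^{(\oo,2)})=I_n^{(\oo,11)}$ since $(\oo,2)^t=(\oo,11)$ and $\ull\supset(\oo,2)\iff\ull^t\supset(\oo,11)$, so $\sigma$ induces $\HH_n(1,0)/I_n^{(\oo,2)}\xrightarrow{\ \sim\ }\HH_n(1,0)/I_n^{(\oo,11)}$. (4) Compose with the two presentations to get $\R_n\simeq\II_n$, and restrict to the semisimple specializations, noting that the semisimplicity criterion $[n]_q!\neq 0$ (Theorem \ref{ariki}, with $q^d\cdot 1\neq 0$ automatic) is symmetric under $q\mapsto q^{-1}$.

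An alternative, more hands-on route that avoids the twist is to write both $\R_n$ and $\II_n$ by generators and relations — Theorem \ref{solomon} gives the presentation of $\R_n$ in the $T_0,\ldots,T_{n-1}$ generators, and \cite{So2,HR} give an analogous presentation of $\II_n$ — and then produce an explicit substitution (e.g. the $q$-Rook generator $P_0$ is the idempotent-like element $1-X$ in the $\II_n$ picture, matching $e=\q^{-1}(T_0+1)$ up to the eigenvalue swap) that matches the two presentations term by term. Either way the conceptual content is the same: both algebras are the two "one-column/one-row" cyclotomic quotients of $\HH_n(1,0)$, and these are exchanged by the eigenvalue-swapping automorphism.

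The main obstacle I anticipate is Step (2): pinning down precisely that $M^{\ull}\circ\sigma\simeq M^{\ull^t}$ and not some other relabelling. One must check the normalization of $\sigma$ on the $T_i$ is the one that actually realizes transposition on each partition block (the factor $-\q^{-1}$ versus, say, an unnormalized $T_i\mapsto \q T_i^{-1}$ differs by a scalar that must be tracked through the inhomogeneous terms $\frac{\q-1}{1-\q^k}$), and verify the transposed content convention is compatible with the content $c_U(i)=(j-1)+c_{U^j}(i)$ used in \eqref{action}. This is a finite, essentially bookkeeping, verification, but it is where a sign or a power of $\q$ can go wrong; everything else is either quoted from earlier in the paper or from \cite{HR}.
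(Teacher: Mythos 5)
Your overall framing is right---both algebras are quotients of $\HH_n(1,0)$ by ideals attached to a transposed pair of bipartitions---but the pivotal step does not go through as stated, and the paper deliberately avoids it. The map $\sigma\colon T_i\mapsto-\q^{-1}T_i$, $X\mapsto X$ preserves the defining relations only if you simultaneously send $\q\mapsto\q^{-1}$, as you yourself note; so $\sigma$ is not an automorphism of $\HH_n(1,0;q)$ but an isomorphism $\HH_n(1,0;q)\to\HH_n(1,0;q^{-1})$. Chasing it through the quotients therefore yields $\R_n(q)\simeq\II_n(q^{-1})$, which is not the assertion to be proved, and generically it yields only a $\CC$-algebra, not a $\CC(\q)$-algebra, isomorphism. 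The semilinearity is not ``harmless'': to pass from $\II_n(q^{-1})$ to $\II_n(q)$ you need exactly the kind of abstract comparison (same number of simples with the same dimensions) that proves the proposition outright. The natural $\CC(\q)$-linear repair also fails: the genuine eigenvalue-swapping map $T_i\mapsto\q-1-T_i=-\q T_i^{-1}$ with $X\mapsto X$ does \emph{not} preserve the relation $XT_1XT_1=T_1XT_1X$; indeed, using $X^2=X$ and $T_1^{-1}=\q^{-1}\bigl(T_1-(\q-1)\bigr)$ one finds that the commutator of $X$ with $T_1^{-1}XT_1^{-1}$ equals $\q^{-2}(\q-1)(T_1X-XT_1)\neq 0$. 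Preserving that relation forces $X\mapsto 1-X$, which is precisely the paper's automorphism $\alpha$, and $\alpha$ carries $I_n^{(\oo,2)}$ to $I_n^{(11,\oo)}$---not to $I_n^{(\oo,11)}$. This is why the paper remarks that the isomorphism $\R_n\simeq\II_n$ is not one of the ones realized by an explicit automorphism of $\HH_n(1,0)$.

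The paper's actual proof sidesteps all of this: since both algebras are finite-dimensional, split and semisimple, it suffices to exhibit a dimension-preserving bijection between their sets of irreducibles, and $(\lambda^1,\lambda^2)\mapsto(\lambda^1,(\lambda^2)^t)$ works because $d(\lambda^1,\lambda^2)=\binom{n}{|\lambda^1|}f_{\lambda^1}f_{\lambda^2}$ and $f_\lambda=f_{\lambda^t}$. To salvage your explicit route you would need either a genuinely $\CC(\q)$-linear automorphism of $\HH_n(1,0)$ interchanging the matrix blocks labelled $\ull$ and $(\lambda^1,(\lambda^2)^t)$ (which exists abstractly, block by block, but not by any simple formula on the generators) or an independent proof that $\II_n(q)\simeq\II_n(q^{-1})$; either way you end up re-proving the dimension count, so the automorphism adds nothing.
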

\begin{proof}A finite dimensional semisimple algebra is just a direct sum of matrix algebras, so two semisimple algebras that have the same number of irreducible representations of the same dimensions are isomorphic. 

Notice that the irreducible representation $M^{(\lambda^1,\lambda^2)}$ of $\HH_n(1,0)$ has dimension equal to the number of standard bi-tableaux of shape $(\lambda^1,\lambda^2)$, which is
$$d(\lambda^1,\lambda^2)={n \choose |\lambda^1|}f_{\lambda^1}f_{\lambda^2}$$
where $f_{\lambda^{j}}$ is the number of standard tableaux of shape $\lambda^j$. If we denote the transposed partition of $\lambda$ by $\lambda^t$, clearly $f_\lambda=f_{\lambda^t}$ because the transpose of a standard tableau is again a standard tableau (of the transposed shape).
It follows immediately that
\begin{equation}\label{symd}d((\lambda_1)^t,(\lambda^2)^t)=d((\lambda^1)^t,\lambda^2)=d(\lambda^1,(\lambda^2)^t)=d(\lambda^1,\lambda^2).\end{equation}
The irreducible representations of $\R_n$ are parametrized by the bipartitions 
$$\{(\lambda^1,\lambda^2)|\lambda^2\text{ is a single column }\}.$$ For $\II_n$, we have the bipartitions $\{(\lambda^1,\lambda^2)|\lambda^2\text{ is a single row}\}$.
Clearly the map
$$(\lambda^1,\lambda^2)\mapsto (\lambda^1,(\lambda^2)^t)$$
gives a bijection between the irreducible representations of $\R_n$ and the ones of $\II_n$ which, by \eqref{symd}, preserves dimensions.
\end{proof}
\begin{rem}In fact the same is true for all four of the quotients of $\HH_n(1,0)$ by the ideal generated by the idempotent corresponding to a one dimensional irreducible representation of $\HH_2(1,0)$. In fact
\begin{align*}\{\operatorname{Irreps} \text{ of }\HH_n(1,0)/I_n^{(2,\oo)}\}&\longleftrightarrow \{(\lambda^1,\lambda^2)|\lambda^1\text{ is a single column}\}\\
\{\operatorname{Irreps} \text{ of }\HH_n(1,0)/I_n^{(11,\oo)}\}&\longleftrightarrow \{(\lambda^1,\lambda^2)|\lambda^1\text{ is a single row}\}.\end{align*}
Hence
$$(\lambda^1,\lambda^2)\mapsto ((\lambda^1)^t,\lambda^2)$$
gives a bijection between those two sets of irreducible representations that preserves dimensions. Exchanging the role of $\lambda^1$ and $\lambda^2$ we get a bijection between these representations and the ones of $\R_n$ and $\II_n$, which again preserves dimensions because $d(\lambda^1,\lambda^2)=d(\lambda^2,\lambda^1)$.
In conclusion
$$ \R_n\simeq\II_n\simeq\HH_n(1,0)/I_n^{(2,\oo)}\simeq\HH_n(1,0)/I_n^{(11,\oo)}.$$
\end{rem}
The existence of these isomorphisms does not mean that it is easy to write them down explicitly in terms of generators, except for two of them which come from an automorphism of $\HH_n(1,0)$. 
\begin{defn}Define the $\CC(\q)$-linear map $\alpha$ on the generators of $\HH_n(1,0)$ by
\begin{align*}\alpha(X)& = 1-X\\
\alpha(T_i)&= \q-1-T_i = -\q T_i^{-1}\end{align*}
\end{defn}
\begin{rem}The map $\alpha$ is an involution, because $\alpha^2=\operatorname{Id}$.
\end{rem}
\begin{prop}The map $\alpha$ is an algebra automorphism of $\HH_n(1,0)$.
This isomorphism acts on the idempotents of $\HH_2(1,0)$ in the following way:
$$ \alpha(u_{(\oo,2)})=u_{(11,\oo)};\quad \alpha(u_{(\oo,11)})=u_{(2,\oo)}.$$
\end{prop}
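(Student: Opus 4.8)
The plan is to verify the two claims separately: first that $\alpha$ respects the defining relations of $\HH_n(1,0)$ from Definition \ref{cyclo} (with $r=2$, $u_1=1$, $u_2=0$), and second that it has the stated effect on the two idempotents.

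For the first part, I would check each relation in turn. The quadratic relation $T_i^2=(\q-1)T_i+\q$ is preserved because $\alpha(T_i)=-\q T_i^{-1}$ and the quadratic relation is equivalent to $(T_i-\q)(T_i+1)=0$, i.e. $T_i$ invertible with $T_i^{-1}=\q^{-1}(T_i-\q+1)$; so $-\q T_i^{-1}$ again satisfies $(Y-\q)(Y+1)=0$. The braid relation $T_iT_{i+1}T_i=T_{i+1}T_iT_{i+1}$ and the far-commutation $T_iT_j=T_jT_i$ for $|i-j|\ge 2$ are preserved by any map sending $T_i$ to a scalar multiple of $T_i^{-1}$, since the $T_i^{-1}$ satisfy the same braid and commutation relations. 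For the relations involving $X$: $\alpha(X)^2=(1-X)^2=1-2X+X^2=1-2X+X=1-X=\alpha(X)$, using $X^2=X$, so the relation $(X-1)X=0$ is preserved. The commutation $XT_i=T_iX$ for $i\ge 2$ is clearly preserved. The remaining relation $XT_1XT_1=T_1XT_1X$ is the only one requiring a genuine computation: I would substitute $\alpha(X)=1-X$ and $\alpha(T_1)=\q-1-T_1$, expand both sides, and use $X^2=X$, $T_1^2=(\q-1)T_1+\q$, and the original relation $XT_1XT_1=T_1XT_1X$ to reduce one side to the other. This is the step I expect to be the main obstacle: it is a finite but somewhat delicate polynomial identity in the subalgebra generated by $X$ and $T_1$, and care is needed because that subalgebra is noncommutative. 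One clean way to organize it is to note that $\alpha$ fixes the subalgebra generated by $T_2,\ldots,T_{n-1}$ up to the scalar-inverse twist, so it suffices to work inside $\langle X, T_1\rangle$, which is $\HH_2(1,0)$, a finite-dimensional algebra of dimension $8$; one can then either compute directly on the basis $\{1,X,T_1,XT_1,T_1X,XT_1X,T_1XT_1,XT_1XT_1\}$ or use the explicit matrices in the table in Section \ref{repshn} (checking the relation holds in each irreducible summand of the semisimple algebra $\HH_2(1,0)$).

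For the second part, once $\alpha$ is known to be an automorphism, it permutes the central idempotents, hence permutes the irreducible representations $M^{\ull}$ of $\HH_2(1,0)$. To identify the permutation it suffices to compute the action of $\alpha(X)=1-X$ and $\alpha(T_1)=\q-1-T_1$ on each one-dimensional module from the table: on $(\oo,2)$ we have $X\mapsto 0$, $T_1\mapsto\q$, so $\alpha(X)=1$, $\alpha(T_1)=-1$, which is exactly the module $(11,\oo)$; on $(\oo,11)$ we have $X\mapsto 0$, $T_1\mapsto -1$, so $\alpha(X)=1$, $\alpha(T_1)=\q$, which is $(2,\oo)$. Since $\alpha$ is an automorphism carrying the isotypic idempotent of a module $M$ to the isotypic idempotent of the module $M\circ\alpha^{-1}=M\circ\alpha$ (as $\alpha$ is an involution), this gives $\alpha(u_{(\oo,2)})=u_{(11,\oo)}$ and $\alpha(u_{(\oo,11)})=u_{(2,\oo)}$ as claimed. (One could also just apply $\alpha$ directly to the explicit formulas for the idempotents in Proposition \ref{idemp} and simplify, but the representation-theoretic argument avoids that computation.)
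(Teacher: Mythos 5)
Your proof is correct, and the first half follows the paper's route almost exactly: the quadratic relation for $\alpha(T_i)$ via $T_i^{-1}$, the braid relations via the inverses, preservation of the commutation relations because $\alpha$ sends each generator to an affine function of itself, $\alpha(X)^2=\alpha(X)$ from $X^2=X$, and the relation $\alpha(X)\alpha(T_1)\alpha(X)\alpha(T_1)=\alpha(T_1)\alpha(X)\alpha(T_1)\alpha(X)$ left as the one genuinely nontrivial computation (the paper likewise only says it ``follows from a somewhat tedious computation, using the fact that $XT_1XT_1=T_1XT_1X$''). Your suggestion to carry out that last check inside the image of $\HH_2(1,0)$, using semisimplicity and verifying the identity in each irreducible from the table, is a clean way to make the tedious step routine; it is not what the paper does, but it is sound, since a relation verified in $\HH_2(1,0)$ descends to its image in $\HH_n(1,0)$. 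Where you genuinely diverge is the idempotent statement: the paper settles $\alpha(y_{(\oo,2)})=y_{(11,\oo)}$ and $\alpha(y_{(\oo,11)})=y_{(2,\oo)}$ by direct substitution into the explicit formulas of Proposition \ref{idemp}, whereas you identify the permutation of central idempotents by computing the twisted modules $M^{\ull}\circ\alpha$ on the two one-dimensional representations. Your argument is shorter and less error-prone, and it correctly uses that $\alpha$ is an involution so that $\alpha^{-1}=\alpha$ when matching $\alpha(u_{\ull})$ with the idempotent of $M^{\ull}\circ\alpha$; the paper's computation, on the other hand, is self-contained and does not rely on knowing the classification of the irreducibles. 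Both are valid; no gaps.
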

\begin{proof}The quadratic relations for $\alpha(X)$ and $\alpha(T_i)$ are easily checked:
\begin{align*}\alpha(X)^2&=(1-X)^2\\
&=1+X^2-2X \\
&=1-X=\alpha(X) \\
\alpha(T_i)^2&=(\q-1-T_i)^2\\
&=(\q-1)^2+T_i^2-2(\q-1)T_i \\
&=(\q-1)^2+(\q-1)T_i+\q-2(\q-1)T_i\\
&=(\q-1)((\q-1)-T_i)+\q\\
&=(\q-1)\alpha(T_i)+\q\end{align*}
Since both $\alpha(X)$ and $\alpha(T_i)$ are a linear combination of the generator and a constant, the image under $\alpha$ of commuting generators still commutes. For the braid relations we have that
\begin{align*} T_iT_{i+1}T_i&= T_{i+1}T_iT_{i+1}\\
\iff  T_i^{-1}T_{i+1}^{-1}T_i^{-1}&= T_{i+1}^{-1}T_{i}^{-1}T_{i+1}^{-1} \\
\iff -\q^3  T_i^{-1}T_{i+1}^{-1}T_i^{-1}&=-\q^3 T_{i+1}^{-1}T_{i}^{-1}T_{i+1}^{-1}\\
\iff \alpha(T_i)\alpha(T_{i+1})\alpha(T_i)&= \alpha(T_{i+1})\alpha(T_i)\alpha(T_{i+1})\end{align*}
The only relation left to check is that
$$ \alpha(X)\alpha(T_1)\alpha(X)\alpha(T_1)=\alpha(T_1)\alpha(X)\alpha(T_1)\alpha(X)$$
which follows from a somewhat tedious computation, using the fact that $XT_1XT_1=T_1XT_1X$.
This proves that $\alpha$ is an algebra automorphism. 
The statement about the action on the idempotents is again proved with a direct computation.
\end{proof}
\begin{cor}The map $\alpha$ induces the isomorphisms
$$\R_n\simeq \HH_n(1,0)/I_n^{(11,\oo)};\qquad \II_n\simeq \HH_n(1,0)/I_n^{(2,\oo)}.$$
\end{cor}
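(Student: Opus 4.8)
The plan is to deduce this corollary formally from the results already in hand, the only substantive input being the preceding proposition. First I would record two elementary observations. One: for a bipartition $\ull$ of $2$, the ideal $I_n^{\ull}\subset\HH_n(1,0)$ was defined as the two-sided ideal generated by $I^{\ull}\subset\HH_2(1,0)$, and since $I^{\ull}$ is in turn the two-sided ideal of $\HH_2(1,0)$ generated by the central idempotent $y_{\ull}$ of Proposition \ref{idemp}, it follows that $I_n^{\ull}$ is simply the two-sided ideal of $\HH_n(1,0)$ generated by the single element $y_{\ull}$. Two: the automorphism $\alpha$ of $\HH_n(1,0)$ restricts, on the standard copy of $\HH_2(1,0)$ inside it (the subalgebra generated by $X$ and $T_1$), to the map $\alpha$ of $\HH_2(1,0)$, since both are given by the same formulas on those generators; hence the computations $\alpha(y_{(\oo,2)})=y_{(11,\oo)}$ and $\alpha(y_{(\oo,11)})=y_{(2,\oo)}$ of the previous proposition remain valid inside $\HH_n(1,0)$.

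Next I would use the general fact that an algebra automorphism carries the two-sided ideal generated by an element $z$ onto the two-sided ideal generated by $\alpha(z)$. Combined with the two observations above, this gives $\alpha(I_n^{(\oo,2)})=I_n^{(11,\oo)}$ and $\alpha(I_n^{(\oo,11)})=I_n^{(2,\oo)}$, so $\alpha$ descends to algebra isomorphisms $\HH_n(1,0)/I_n^{(\oo,2)}\simeq\HH_n(1,0)/I_n^{(11,\oo)}$ and $\HH_n(1,0)/I_n^{(\oo,11)}\simeq\HH_n(1,0)/I_n^{(2,\oo)}$. Finally, composing with the identification $\R_n\simeq\HH_n(1,0)/I_n^{(\oo,2)}$ established in Section \ref{rnquot}, and with $\II_n\simeq\HH_n(1,0)/I_n^{(\oo,11)}$ from the Halverson--Ram theorem quoted at the start of this section, yields the two claimed isomorphisms.

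I do not expect a genuine obstacle here: all the real content has already been discharged in the previous proposition (checking that $\alpha$ respects every defining relation of $\HH_n(1,0)$, in particular $XT_1XT_1=T_1XT_1X$, and computing its effect on the one-dimensional idempotents of $\HH_2(1,0)$). The present corollary is then pure book-keeping with ideals and quotient algebras; the only points that deserve a word of care are that $I_n^{\ull}$ is generated as an ideal of $\HH_n(1,0)$ by the single idempotent $y_{\ull}$, and that $\alpha$ on $\HH_n(1,0)$ is compatible with $\alpha$ on the subalgebra $\HH_2(1,0)$ — both immediate.
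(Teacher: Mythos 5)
Your argument is correct and is precisely the intended one: the paper states this corollary without proof, as the immediate consequence of the preceding proposition, and your book-keeping (the ideal $I_n^{\ull}$ is generated by the single idempotent $y_{\ull}$, an automorphism carries the ideal generated by $z$ to the ideal generated by $\alpha(z)$, then compose with the identifications of $\R_n$ and $\II_n$ as quotients) is exactly the reasoning being suppressed. The only cosmetic point is that the proposition writes $u_{(\oo,2)}$ etc.\ for what Proposition \ref{idemp} calls $y_{(\oo,2)}$; you correctly treat these as the same elements.
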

\section{Jucys-Murphy elements, the center of $\R_n$ and categorification}\label{jme}
The Jucys-Murphy (JM) elements in $\CC[S_n]$ were defined in  \cite{J} and \cite{Mu}.  They generate the maximal commutative Gelfand-Zeitlin subalgebra of  $\CC[S_n]$ and act diagonally on the seminormal representations. This point of view has been used by Okounkov and Vershik (\cite{OV}) to approach the study of the representations of $S_n$  in a Lie theoretic way, considering the weights of this commutative subalgebra. Dipper and James (\cite{DJ}) gave $\q$-analogs of the JM elements in $\HH_n$, which also act diagonally on seminormal representations and generate the Gelfand-Zeitlin subalgebra.
\begin{defn}We define the JM elements in $\HH_n(1,0)$ to be, for $i=1,\ldots,n$
\begin{equation*}\tilde{L}_i=\q^{1-i}T_{i-1}\cdots T_1XT_1\cdots T_{i-1}.
\end{equation*}
and we then define the JM elements in $\R_n$ to be the image of these in the quotient, that is
\begin{equation*}L_i=\q^{1-i}T_{i-1}\cdots T_1eT_1\cdots T_{i-1}.
\end{equation*}
\end{defn}
Several properties of the elements $\tilde{L}_i$ are proved in \cite[Lemma 13.2]{Ar2} (the notation is different there, in Ariki's notation we have $\tilde{L}_i=t_i$ and $T_i=a_{i+1}$). In particular we have that 
$$\tilde{L}_i\tilde{L}_j=\tilde{L}_j\tilde{L}_i$$
and if $p(\tilde{L}_1,\ldots,\tilde{L}_n)$ is a polynomial which is symmetric in $\tilde{L}_i$ and $\tilde{L}_{i+1}$, then
\begin{equation}\label{commute}T_i p(\tilde{L}_1,\ldots,\tilde{L}_n)=p(\tilde{L}_1,\ldots,\tilde{L}_n) T_i
\end{equation}
This implies that symmetric polynomials in the $\tilde{L}_i$'s are in the center of $\HH_n(1,0)$.
\begin{prop}[{\cite[Corollary 13.8]{Ar2}}]\label{diag}Let $U=(U^1,U^2)\in\T(\ull)$ be a bi-tableau and $v_U$ the corresponding basis element in $M^{\ull}$. Then
$$ \tilde{L}_i v_U=\ee_i \q^{c_U(i)}v_U;\quad\text{ with }\quad
\ee_i=\left\{\begin{array}{cl} 1 & \text{ if }i\in U^1 \\ 0 & \text{ if }i\in U^2\end{array}\right. ;$$
\end{prop}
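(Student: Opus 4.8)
The plan is an induction on $i$, starting from $\tilde{L}_1=X$ and using the recursion $\tilde{L}_{i+1}=\q^{-1}T_i\tilde{L}_iT_i$, which is immediate from the definition of the $\tilde{L}_i$. For the base case, \eqref{action} gives $Xv_U=v_U$ when $1\in U^1$ and $Xv_U=0$ when $1\in U^2$; since $1$ is the smallest entry of the component in which it lies, in the first case it occupies the corner box of content $0$, so $\q^{c_U(1)}=1$, while in the second case $\ee_1=0$ and the content is irrelevant. Hence $\tilde{L}_1v_U=\ee_1\q^{c_U(1)}v_U$ in all cases.

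For the inductive step, fix $U$ and let $W$ be the span of $v_U$, together with $v_{s_i(U)}$ if the bi-tableau $s_i(U)$ is standard. By \eqref{action} $W$ is $T_i$-stable, by the inductive hypothesis it is $\tilde{L}_i$-stable (with $\tilde{L}_i$ diagonal in this basis), and therefore $\tilde{L}_{i+1}=\q^{-1}T_i\tilde{L}_iT_i$ preserves $W$ as well. I split into cases according to the positions of $i$ and $i+1$ in $U$. If $i,i+1$ lie in the same row (resp.\ column) of the same component, then $T_iv_U=\q v_U$ (resp.\ $-v_U$), so the recursion gives $\tilde{L}_{i+1}v_U=\q^{\pm1}\tilde{L}_iv_U=\ee_i\q^{c_U(i)\pm1}v_U$, which is the required $\ee_{i+1}\q^{c_U(i+1)}v_U$ since then $c_U(i+1)=c_U(i)\pm1$ and $\ee_{i+1}=\ee_i$. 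If $i$ and $i+1$ both lie in $U^2$ (in different rows and columns), then $\tilde{L}_i$ kills all of $W$, hence so does $\tilde{L}_{i+1}$, and indeed $\ee_{i+1}=0$.

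In the two remaining cases---$i,i+1$ in $U^1$ in different rows and columns, or $i,i+1$ in different components---$s_i(U)$ is standard, $W$ is two-dimensional, and by induction $\tilde{L}_i$ acts diagonally on $\{v_U,v_{s_i(U)}\}$ with eigenvalues $\alpha:=\ee_i\q^{c_U(i)}$ and $\beta:=\ee_{i+1}\q^{c_U(i+1)}$ (in $s_i(U)$ the entry $i$ occupies the box that held $i+1$ in $U$). Here $\alpha\neq\beta$: in the different-components case one of them vanishes and the other is a nonzero power of $\q$; in the same-component case $c_U(i)\neq c_U(i+1)$, which is precisely the condition making \eqref{action} well-defined in this case (consecutive entries of a standard tableau in different rows and columns lie on distinct diagonals). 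Since $\tilde{L}_{i+1}$ commutes with $\tilde{L}_i$, it is also diagonal on $\{v_U,v_{s_i(U)}\}$, say with eigenvalues $\mu,\mu'$. On the other hand the elementary symmetric functions $\tilde{L}_i+\tilde{L}_{i+1}$ and $\tilde{L}_i\tilde{L}_{i+1}$ commute with $T_i$ by \eqref{commute}; inspecting the $2\times2$ matrix of $T_i$ on $W$ coming from \eqref{action}, one sees it has characteristic polynomial $(t-\q)(t+1)$ and is neither scalar nor diagonal in the basis $\{v_U,v_{s_i(U)}\}$, so any operator commuting with it and diagonal in that basis must be a scalar. Thus $\alpha+\mu=\beta+\mu'$ and $\alpha\mu=\beta\mu'$, and since $\alpha\neq\beta$ this forces $\mu=\beta$, $\mu'=\alpha$. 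Therefore $\tilde{L}_{i+1}v_U=\beta v_U=\ee_{i+1}\q^{c_U(i+1)}v_U$, which is the claim at index $i+1$ for the arbitrary bi-tableau $U$; the eigenvalue $\alpha$ on $v_{s_i(U)}$ is exactly the value the formula predicts for $i+1$ in $s_i(U)$, consistent with running the same argument there.

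The structural input is light---the recursion $\tilde{L}_{i+1}=\q^{-1}T_i\tilde{L}_iT_i$, the commutativity of the $\tilde{L}_j$, and relation \eqref{commute}---so I expect the main obstacle to be purely bookkeeping: checking that the listed cases are exhaustive, tracking how $\ee_j$ and $c_U(j)$ transform under the transposition $s_i$, and verifying in every sub-case of \eqref{action} (including the degenerate values $c_U(i)-c_U(i+1)=\pm1$, where one of $v_U,v_{s_i(U)}$ happens to be a $T_i$-eigenvector but $T_i$ is still non-diagonal on $W$) the claimed shape of the $2\times2$ matrix of $T_i$. None of this is deep, but it is where essentially all the work sits.
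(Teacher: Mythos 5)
Your argument is correct, and it is worth noting that the paper itself offers no proof of this statement at all: it is quoted verbatim from Ariki's book (\cite[Corollary 13.8]{Ar2}), so there is nothing internal to compare against. Your derivation is the standard seminormal-form computation, and all of its inputs are available in the surrounding text: the recursion $\tilde{L}_{i+1}=\q^{-1}T_i\tilde{L}_iT_i$ is immediate from the definition, the commutativity of the $\tilde{L}_j$ and relation \eqref{commute} are quoted from \cite[Lemma 13.2]{Ar2}, and the $2\times 2$ commutant argument correctly pins down the eigenvalue of $\tilde{L}_{i+1}$ on $v_U$ as $\ee_{i+1}\q^{c_U(i+1)}$ once one knows $\alpha\neq\beta$. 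Two small points of bookkeeping you flag are in fact automatic: when $i,i+1$ lie in the same component in different rows and columns of a standard tableau one always has $|c_U(i)-c_U(i+1)|\geq 2$ (the box at the intersection of the row of one and the column of the other would otherwise have to contain an integer strictly between $i$ and $i+1$), so the degenerate values $k=\pm1$ never occur in the two-dimensional cases; and non-diagonality of $T_i$ on $W$ (rather than the stronger claim that both off-diagonal entries are nonzero) already forces a commuting diagonal operator to be scalar, since $[D,M]_{12}=(d_1-d_2)M_{12}$ and $[D,M]_{21}=(d_2-d_1)M_{21}$. With those observations your case analysis is exhaustive and the proof is complete.
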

\begin{cor}\label{corol}
If $\ull=(\lambda^1,1^m)$ so that $M^{\ull}$ is a representation of $\R_n$ then, with the same conventions as in Proposition \ref{diag}, we have
$$ L_i v_U=\ee_i \q^{c_U(i)}v_U.$$
\end{cor}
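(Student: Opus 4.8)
The plan is to deduce Corollary \ref{corol} directly from Proposition \ref{diag} by applying the quotient map $\HH_n(1,0)\ronto\R_n\simeq\HH_n(1,0)/I_n^{(\oo,2)}$. The key observation is that, under the isomorphism of Proposition \ref{isoiso}, the irreducible $\R_n$-module $M^{\theta,k}_n$ is identified with $M^{\ull}$ for $\ull=(\theta,1^{n-k})$, and the basis vector $v_U$ appearing in Corollary \ref{corol} is literally the image of a basis vector of $M^{\ull}$ under the quotient. Since $L_i$ is by definition the image of $\tilde L_i$ in the quotient algebra, its action on $v_U\in M^{\ull}$ (viewed as an $\R_n$-module) is exactly the action of $\tilde L_i$ on $v_U\in M^{\ull}$ (viewed as an $\HH_n(1,0)$-module), which is computed by Proposition \ref{diag}.

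Concretely, I would argue as follows. First, recall that the bipartitions $\ull$ with $\ull\not\supseteq(\oo,2)$ are precisely those of the form $(\lambda^1,1^m)$, so the hypothesis $\ull=(\lambda^1,1^m)$ is exactly the statement that $M^{\ull}$ descends to an irreducible representation of the quotient $\HH_n(1,0)/I_n^{(\oo,2)}\simeq\R_n$. For such $\ull$, the $\R_n$-action on $M^{\ull}$ is obtained by first acting through $\HH_n(1,0)$ and then noting that $I_n^{(\oo,2)}$ acts by zero (since $M^{\ull}$ is not a constituent of $I_n^{(\oo,2)}$). In particular, for any element $a\in\HH_n(1,0)$ with image $\bar a\in\R_n$, one has $\bar a\cdot v_U = a\cdot v_U$ for all $v_U\in M^{\ull}$. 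Applying this with $a=\tilde L_i$, so that $\bar a = L_i$, gives $L_i v_U = \tilde L_i v_U = \ee_i\q^{c_U(i)}v_U$ by Proposition \ref{diag}, which is the claim.

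There is essentially no obstacle here; the corollary is a formal consequence of the definitions together with the semisimplicity results already established. The only point requiring a word of care is the compatibility of the two module structures on $M^{\ull}$ — the one as an $\HH_n(1,0)$-module via Theorem \ref{indres} and the Ariki--Koike formulas \eqref{action}, and the one as an $\R_n$-module via Proposition \ref{isoiso} and Proposition \ref{irrep} — but these agree by construction, since the isomorphism $\R_n\simeq\HH_n(1,0)/I_n^{(\oo,2)}$ was set up precisely so that the $\R_n$-module structure on each $M^{\theta,k}_n$ pulls back to the $\HH_n(1,0)$-module structure on $M^{(\theta,1^{n-k})}$. So the proof is just the observation that $L_i=\overline{\tilde L_i}$ acts on a quotient module the same way $\tilde L_i$ does, combined with Proposition \ref{diag}.
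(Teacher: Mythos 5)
Your proposal is correct and is exactly the argument the paper intends: the corollary is stated without proof precisely because $L_i$ is by definition the image of $\tilde L_i$ under the quotient map, and $M^{\ull}$ for $\ull=(\lambda^1,1^m)$ is an $\HH_n(1,0)$-module on which $I_n^{(\oo,2)}$ acts by zero, so Proposition \ref{diag} applies verbatim. Your added care about the compatibility of the two module structures via Proposition \ref{isoiso} is sound but not strictly needed, since the statement is already phrased in terms of the Ariki--Koike basis $v_U$ of $M^{\ull}$.
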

We can describe the center of $\R_n$ in terms of symmetric polynomials in the JM elements.
\begin{thm}$$ Z(\R_n)=\CC(\q)[L_1,\ldots,L_n]^{S_n}$$
and analogously for the semisimple specializations $\R_n(q)$.
\end{thm}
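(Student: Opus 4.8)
The plan is to establish the two inclusions $Z(\R_n)\supseteq\CC(\q)[L_1,\ldots,L_n]^{S_n}$ and $Z(\R_n)\subseteq\CC(\q)[L_1,\ldots,L_n]^{S_n}$ separately, using the explicit diagonal action of the $L_i$ on irreducible modules (Corollary \ref{corol}) together with the semisimplicity of $\R_n$.

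\emph{The easy inclusion.} First I would show that any symmetric polynomial $p(L_1,\ldots,L_n)$ is central. Since $\R_n\simeq\HH_n(1,0)/I_n^{(\oo,2)}$ and the $L_i$ are the images of the $\tilde L_i$, relation \eqref{commute} descends to the quotient: if $p$ is symmetric in $\tilde L_i$ and $\tilde L_{i+1}$ then $T_i\,p(\tilde L)=p(\tilde L)\,T_i$ in $\HH_n(1,0)$, hence $T_i\,p(L)=p(L)\,T_i$ in $\R_n$ for all $i\geq 1$. A symmetric polynomial in all the $L_i$ commutes in particular with each adjacent transposition, so it commutes with $T_1,\ldots,T_{n-1}$; it also commutes with $e$, because $e=\q^{1-1}T_0\cdots$ wait — more carefully, $L_1=e$, and $p(L)$ commutes with $L_1=e$ since the $L_i$ commute with each other (this commutativity is inherited from $\tilde L_i\tilde L_j=\tilde L_j\tilde L_i$ via the quotient map). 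As $\R_n$ is generated by $e,T_1,\ldots,T_{n-1}$ (Lemma \ref{newpres}), it follows that $p(L)\in Z(\R_n)$. This gives one inclusion, and also shows $\CC(\q)[L_1,\ldots,L_n]^{S_n}\subseteq Z(\R_n)$ is a subalgebra of the center.

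\emph{The hard inclusion and the dimension count.} For the reverse inclusion I would argue via the semisimple structure. Since $\R_n$ is semisimple with irreducibles $M_n^{\theta,k}$ (for $0\le k\le n$, $\theta\vdash k$), the center is a product of copies of $\CC(\q)$, one for each irreducible, so $\dim_{\CC(\q)} Z(\R_n)=\#\{(\theta,k): 0\le k\le n,\ \theta\vdash k\}$. On the other side, by Corollary \ref{corol} the element $L_i$ acts on $M_n^{(\theta,1^{n-k})}$ (for a standard bi-tableau $U=(U^1,U^2)$) by the scalar $\varepsilon_i\q^{c_U(i)}$, which depends only on $U$. A symmetric polynomial $p(L_1,\ldots,L_n)$ therefore acts on the whole module $M_n^{\theta,k}$ by a single scalar, namely the value of $p$ on the multiset of eigenvalues $\{\varepsilon_i\q^{c_U(i)}\}$ — and this multiset is the \emph{same} for every standard bi-tableau $U$ of fixed shape $(\theta,1^{n-k})$ (permuting the entries of a bi-tableau just permutes the pairs $(\varepsilon_i,c_U(i))$). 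Thus the algebra homomorphism $\CC(\q)[L_1,\ldots,L_n]^{S_n}\to\prod_{(\theta,k)}\CC(\q)$ sending $p$ to its tuple of scalars lands in the center and I must show it is surjective. Surjectivity amounts to: the multisets of eigenvalues $\mathcal E_{\theta,k}:=\{\varepsilon_i\q^{c_U(i)}\}$, as $(\theta,k)$ ranges over all pairs, are pairwise distinct. Indeed, $k=\#\{i:\varepsilon_i=1\}$ and $n-k=\#\{i:\varepsilon_i=0\}$ recovers $k$; the submultiset of nonzero eigenvalues $\{\q^{c_U(i)}:i\in U^1\}$ is the multiset of contents of the Young diagram $\theta$, evaluated at powers of $\q$, and since $\q$ is transcendental over $\CC$ distinct exponents give distinct elements of $\CC(\q)$, so $\mathcal E_{\theta,k}$ determines the content-multiset of $\theta$ and hence $\theta$ itself (a partition is determined by its multiset of contents). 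Once the $\mathcal E_{\theta,k}$ are distinct, an interpolation argument (symmetric functions separate distinct multisets) produces for each $(\theta,k)$ a symmetric polynomial in the $L_i$ acting as the corresponding primitive central idempotent, so the map is surjective and $\dim\CC(\q)[L_1,\ldots,L_n]^{S_n}\ge\#\{(\theta,k)\}=\dim Z(\R_n)$. Combined with the first inclusion, equality follows.

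\emph{Main obstacle.} The subtle point is the separation statement: proving that the eigenvalue multisets $\mathcal E_{\theta,k}$ are pairwise distinct, which rests on (i) a partition being recoverable from its multiset of contents and (ii) the transcendence of $\q$ so that $\{\q^j\}_{j\in\ZZ}$ are $\CC$-linearly independent as elements of $\CC(\q)$ — this is exactly where the genericity hypothesis enters and is why the semisimple specializations $\R_n(q)$ require $q$ to avoid the bad values from Theorem \ref{ariki}. The remaining combinatorial/interpolation steps are routine. For the specialization statement, I would run the same argument replacing $\CC(\q)$ by $\CC$ and $\q$ by $q$, noting that when $\R_n(q)$ is semisimple the relevant powers of $q$ are still distinct (again by the conditions in Theorem \ref{ariki}), so the separation and interpolation arguments go through verbatim.
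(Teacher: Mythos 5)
Your proposal is correct and follows essentially the same route as the paper: the easy inclusion by descending the centrality of symmetric polynomials in the $\tilde{L}_i$ from $\HH_n(1,0)$, and the reverse inclusion by using semisimplicity to reduce to showing that the eigenvalue multisets $\{\ee_i\q^{c_U(i)}\}$ separate the irreducibles, recovering $k$ from the multiplicity of $0$ and $\theta$ from the multiset of contents. Your explicit mention of the interpolation step producing the central idempotents is a slightly more detailed rendering of the paper's one-line reduction, but the argument is the same.
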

\begin{proof}Since symmetric polynomials in the $\tilde{L}_i$ are in the center of $\HH_n(1,0)$ and the surjection
$$\HH_n(1,0)\ronto \R_n$$
gives $\tilde{L}_i\mapsto L_i$, we have the easy inclusion 
$$Z(\R_n)\supset\CC(\q)[L_1,\ldots,L_n]^{S_n}.$$
For the other inclusion, since $\R_n$ is semisimple, all we need to show is that the action of $\CC(\q)[L_1,\ldots,L_n]^{S_n}$ distinguishes the irreducible representations of $\R_n$. Let $\ull=(\lambda^1,\lambda^2)$ be a bipartition with $\lambda^2$ a single column, and consider the irreducible representation $M^{\ull}$ of $\R_n$. For a bitableau $U\in\T(\ull)$, we have, by Corollary \ref{corol}, that the JM elements $(L_1,\ldots,L_n)$ act on $v_U$ by the eigenvalues $(\ee_1\q^{c_U(1)},\ldots,\ee_n\q^{c_U(n)})$. It follows that the elementary symmetric polynomial $e_i(L_1,\ldots,L_n)$ acts on all the basis elements $\{v_U|U\in\T(\ull)\}$ of $M^{\ull}$ by the same eigenvalue 
$$a_i=e_i(\ee_1\q^{c_U(1)},\ldots,\ee_n\q^{c_U(n)})$$
which is independent of $U$. We claim that the values $a_1,\ldots,a_n$ determine $M^{\ull}$ uniquely, which will imply the theorem.

Given $a_1,\ldots,a_n$, by definition of the elementary symmetric functions, we can recover the unordered multiset $S=\{\ee_1\q^{c_U(1)},\ldots,\ee_n\q^{c_U(n)}\}$ from the equation
$$(t-\ee_1\q^{c_U(1)})\cdots(t-\ee_n\q^{c_U(n)})=t^n-a_1 t^{n-1}+\ldots +(-1)^{n-1}a_{n-1}t+(-1)^na_n.$$
We then obtain $(\lambda^1,\lambda^2)$ in the following way: $\lambda^2$ is the partition with a single column with number of boxes equal to the multiplicity of zero appearing in $S$. For each $j\in\ZZ$, if we denote by $d_j$ the number of times the number $\q^j$ appears in $S$, then $\lambda^1$ is the unique partition with $d_j$ boxes on the $j$-th diagonal.
\end{proof}
\begin{rem}This is completely analogous to the proof of \cite[Prop 3.22]{RR}, but it should be noticed that in that proof it was necessary that none of the parameters $u_1,\ldots,u_r$ of the cyclotomic Hecke algebra be zero. In fact $\CC(\q)[\tilde{L}_1,\ldots,\tilde{L}_n]^{S_n}\subsetneq Z(\HH_n(1,0))$, since the $\tilde{L}_i$'s cannot distinguish the representations $M^{(\lambda^1,\lambda^2)}$ and $M^{(\lambda^1,\lambda^3)}$.
\end{rem}
\subsection{Refined restriction and induction functors}
Consider the JM element $L_n\in\R_n$. For $1\leq i\leq n-2$, we have that $T_iL_n=L_nT_i$, and, since $L_1=e$, we also have $e L_n=L_n e$. It follows that the image of the inclusion
$$\R_{n-1}\rinto \R_n$$
commutes with $L_n$. We denote by $\R_n$-$\Mod$ the category of finitely generated left $\R_n$-modules. Given  $M\in\R_n$-$\Mod$, it has a vector space decomposition into the direct sum of the generalized eigenspaces for $L_n$ and since $\R_{n-1}$ commutes with the $L_n$ action, each of those generalized eigenspace is a module for $\R_{n-1}$.
\begin{defn}For $M\in\R_n$-$\Mod$ and an eigenvalue $\alpha$ of $L_n$, we denote by $M(\alpha)\in\R_{n-1}$-$\Mod$ the generalized eigenspace with eigenvalue $\alpha$.
\end{defn}
Since $\R_n$ is semisimple, each module $M$ decomposes as a direct sum of $M^{\ull}$. By Corollary \ref{corol}, since $L_n$ acts diagonally on the basis $v_U$, we get that $M(\alpha)$ is an actual eigenspace and that the possible eigenvalues for $L_n$ are in the set $\{\q^i|i\in\ZZ\}\cup\{0\}$.
\begin{defn}For $i\in\ZZ$, we define the refined restriction functor 
$$i\text{-}\Res^n_{n-1}:\R_n\text{-}\Mod\to\R_{n-1}\text{-}\Mod\quad\text{ by }\quad M\mapsto M(\q^i);$$
we also define the functor 
$$\infty\text{-}\Res^n_{n-1}:\R_n\text{-}\Mod\to\R_{n-1}\text{-}\Mod\quad\text{ by }\quad M\mapsto M(0).$$
\end{defn}
\begin{prop}\label{ires}Let $(\lambda^1,1^k)$ be a bipartition of $n$, so that $M^{(\lambda^1,1^k)}$ is an irreducible $\R_n$-module. \begin{itemize}
\item[(i)] For $i\in\ZZ$, we have that 
$$i\text{-}\Res^n_{n-1}M^{(\lambda^1,1^k)}=M^{(\lambda^1[i^-],1^k)}$$
where $\lambda^1[i^-]$ is the partition obtained from $\lambda^1$ by removing a box of content $i$ (there is at most one box of content $i$ that can be removed to give another partition). If there is no such box, then $M^{(\lambda^1[i^-],1^k)}=0$.
\item[(ii)] We also have 
$$\infty\text{-}\Res^n_{n-1}M^{(\lambda^1,1^k)}=M^{(\lambda^1,1^{k-1})}.$$
when $k\geq 1$. If $k=0$ then $M^{(\lambda^1,1^{k-1})}=0$.
\end{itemize}
\end{prop}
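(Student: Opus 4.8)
The plan is to reduce everything to the diagonal action of the JM element $L_n$ on the seminormal basis, which is already in hand via Corollary \ref{corol}. First I would recall that by Theorem \ref{indres}, restricting $M^{(\lambda^1,1^k)}$ from $\HH_n(1,0)$ to $\HH_{n-1}(1,0)$ decomposes as the direct sum of $M^{\umu}$ over all bipartitions $\umu$ obtained by removing one box from $(\lambda^1,1^k)$. Passing through the quotient $\R_n \simeq \HH_n(1,0)/I_n^{(\oo,2)}$, only the $\umu$ whose second component is a single column survive (the others are killed), and these are exactly $(\lambda^1[i^-],1^k)$ for $i$ the content of a removable box of $\lambda^1$, together with $(\lambda^1,1^{k-1})$ (removing the bottom box of the single-column second diagram). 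So as an $\R_{n-1}$-module, $\Res^n_{n-1} M^{(\lambda^1,1^k)}$ is the direct sum of $M^{(\lambda^1[i^-],1^k)}$ over removable boxes and $M^{(\lambda^1,1^{k-1})}$.

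Next I would identify which summand lands in which $L_n$-eigenspace. For a standard bi-tableau $U = (U^1,U^2)$ of shape $(\lambda^1,1^k)$, Corollary \ref{corol} gives $L_n v_U = \ee_n \q^{c_U(n)} v_U$, where $\ee_n = 1$ if $n \in U^1$ and $\ee_n = 0$ if $n \in U^2$. The summand $M^{(\lambda^1[i^-],1^k)}$ in the restriction corresponds precisely to those $U$ in which $n$ occupies the (unique) removable box of content $i$ in $U^1$: there $\ee_n = 1$ and $c_U(n) = i$, so $L_n$ acts by $\q^i$. The summand $M^{(\lambda^1,1^{k-1})}$ corresponds to those $U$ in which $n$ sits in the bottom box of the single-column $U^2$: there $\ee_n = 0$, so $L_n$ acts by $0$. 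Since $\R_n$ is semisimple and $L_n$ acts diagonally, each generalized eigenspace is an honest eigenspace, so $M(\q^i)$ is exactly the $M^{(\lambda^1[i^-],1^k)}$ summand and $M(0)$ is exactly the $M^{(\lambda^1,1^{k-1})}$ summand, which is (i) and (ii). When no removable box of content $i$ exists, or when $k = 0$ so $U^2$ is empty, the relevant set of bi-tableaux is empty and the corresponding module is zero.

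The one point requiring a little care — and the main obstacle — is making rigorous the claim that the combinatorial summand of $\Res^n_{n-1}$ indexed by a removed box genuinely coincides with the $L_n$-eigenspace for the eigenvalue dictated by that box's content, rather than merely having the right dimension. This follows because the isomorphism in Theorem \ref{indres} is realized on the seminormal basis: the basis vectors $v_U$ of $M^{(\lambda^1,1^k)}$ with $n$ removed from a fixed box are an $\HH_{n-1}(1,0)$-submodule isomorphic to the seminormal module for the shape with that box deleted, and the $\R_{n-1}$-structure is the restriction of this. One should also check the edge case where a content $i$ is attained by a removable box but removing it does not yield a valid partition; but for a single partition $\lambda^1$ there is at most one removable box per content, and if it is removable then the result is automatically a partition, so no genuine ambiguity arises. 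With these observations the proof is complete.
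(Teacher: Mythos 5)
Your proof is correct and is essentially the paper's argument: both rest on Corollary \ref{corol} to read off the $L_n$-eigenvalue from the position of $n$ in the bi-tableau, together with the observation that the span of the $v_U$ with $n$ in a fixed removable box is an $\R_{n-1}$-submodule identified with the seminormal module of the smaller shape by deleting that box. The only cosmetic difference is that you first invoke the branching rule of Theorem \ref{indres} and then match summands to eigenspaces, whereas the paper constructs the box-deletion isomorphism directly; the substantive step you flag as "requiring care" is exactly the step the paper carries out.
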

\begin{proof}
From Corollary \ref{corol}, the eigenspace $M^{(\lambda^1,1^k)}(\q^i)$ is spanned by the $v_{(U^1,U^2)}$ such that $n\in U^1$ and $c_U(n)=i$. If there are no such $U$'s then the restriction is the zero module. Otherwise, since the action of $e,T_1,\ldots,T_{n-2}$ doesn't affect the position of the box containing $n$ in the tableau, we have a map
\begin{equation}\label{is} i\text{-}\Res^n_{n-1}M^{(\lambda^1,1^k)}\to M^{(\lambda^1[i^-],1^{k})}\end{equation}
$$v_{(U^1,1^k)}\mapsto v_{(U^1\setminus\{n\},1^k)}$$
where $U^1\setminus\{n\}$ is the tableau obtained by removing the box containing $n$. This is an isomorphism by the definition of the action in \eqref{action} and it proves (i).

For (ii) it is completely analogous: $M^{(\lambda^1,1^k)}(0)$ is spanned by $v_{(U^1,U^2)}$ such that $n\in U^2$, and in this case $n$ will be in the box at the bottom of the single column of $U^2$. We conclude that we have the analogous isomorphism to \eqref{is} by removing the box containing $n$.
\end{proof}
Since both taking generalized eigenspaces and restriction are exact functors, we have that for all $i\in\ZZ\cup\{\infty\}$, the functor $i$-$\Res^n_{n-1}$ is exact. We then have a left adjoint functor 
$$ i\text{-}\Ind^n_{n-1}:\R_{n-1}\text{-}\Mod\to\R_n\text{-}\Mod $$
and a right adjoint functor
$$ i\text{-}\widehat{\Ind}^n_{n-1}:\R_{n-1}\text{-}\Mod\to\R_n\text{-}\Mod.$$
\begin{prop}\label{iind}Let $(\lambda^1,1^k)$ be a bipartition of $n-1$, so that $M^{(\lambda^1,1^k)}$ is an irreducible $\R_{n-1}$-module.
\begin{itemize}
\item[(i)]For $i\in\ZZ$ we have
$$i\text{-}\Ind^n_{n-1}M^{(\lambda^1,1^k)}=i\text{-}\widehat{\Ind}^n_{n-1}M^{(\lambda^1,1^k)}=
M^{(\lambda^1[i^+],1^k)}$$
where $\lambda^1[i^+]$ is the partition obtained from $\lambda^1$ by adding a box of content $i$ (there is at most one box of content $i$ that can be added to give another partition). If there is no such box, then $M^{(\lambda^1[i^+],1^k)}=0$.
\item[(ii)]We also have 
$$\infty\text{-}\Ind^n_{n-1}M^{(\lambda^1,1^k)}=\infty\text{-}\widehat{\Ind}^n_{n-1}M^{(\lambda^1,1^k)}=
M^{(\lambda^1,1^{k+1})}.$$
\end{itemize}
\end{prop}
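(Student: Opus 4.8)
The plan is to deduce the statement formally from Proposition \ref{ires}, the adjunction defining the functors, and the fact that $\R_n$ and $\R_{n-1}$ are split semisimple (so that $\End_{\R_m}(M^{\umu})=\CC(\q)$ for every irreducible). Recall that in a split semisimple category an object $M$ is determined up to isomorphism by the multiplicities $[M:S]$ of the irreducibles $S$, and for each irreducible $S$ one has $[M:S]=\dim_{\CC(\q)}\Hom(S,M)=\dim_{\CC(\q)}\Hom(M,S)$. Hence it suffices to compute these $\Hom$-spaces for $i\text{-}\Ind^n_{n-1}M^{(\lambda^1,1^k)}$ and $i\text{-}\widehat{\Ind}^n_{n-1}M^{(\lambda^1,1^k)}$ paired against the irreducibles $M^{(\mu^1,1^l)}$ of $\R_n$.

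For the left adjoint I would use the adjunction $\Hom_{\R_n}(i\text{-}\Ind^n_{n-1}M^{(\lambda^1,1^k)},\,M^{(\mu^1,1^l)})\cong\Hom_{\R_{n-1}}(M^{(\lambda^1,1^k)},\,i\text{-}\Res^n_{n-1}M^{(\mu^1,1^l)})$ and rewrite the right-hand side by Proposition \ref{ires}(i) as $\Hom_{\R_{n-1}}(M^{(\lambda^1,1^k)},\,M^{(\mu^1[i^-],1^l)})$. This is one-dimensional exactly when $l=k$ and $\mu^1[i^-]=\lambda^1$, i.e.\ when $\mu^1$ is obtained from $\lambda^1$ by adding a box of content $i$, and zero otherwise; since at most one box of content $i$ can be added to a partition, this pins down $i\text{-}\Ind^n_{n-1}M^{(\lambda^1,1^k)}\cong M^{(\lambda^1[i^+],1^k)}$, read as $0$ if no such box exists. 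For the right adjoint the computation is the same: $\Hom_{\R_n}(M^{(\mu^1,1^l)},\,i\text{-}\widehat{\Ind}^n_{n-1}M^{(\lambda^1,1^k)})\cong\Hom_{\R_{n-1}}(i\text{-}\Res^n_{n-1}M^{(\mu^1,1^l)},\,M^{(\lambda^1,1^k)})=\Hom_{\R_{n-1}}(M^{(\mu^1[i^-],1^l)},\,M^{(\lambda^1,1^k)})$, which has the same dimension as before, so $i\text{-}\widehat{\Ind}^n_{n-1}M^{(\lambda^1,1^k)}\cong M^{(\lambda^1[i^+],1^k)}$ as well. Part (ii) is identical word for word with Proposition \ref{ires}(ii) in place of (i): there $\infty\text{-}\Res^n_{n-1}M^{(\mu^1,1^l)}=M^{(\mu^1,1^{l-1})}$, so the only irreducible occurring in either induction functor applied to $M^{(\lambda^1,1^k)}$ is $M^{(\lambda^1,1^{k+1})}$, with multiplicity one.

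I do not expect a genuine obstacle here: once Proposition \ref{ires} is available, this is pure bookkeeping with adjunctions in semisimple categories. The only point worth flagging is that the left and right adjoints of $i\text{-}\Res^n_{n-1}$ agree on objects; but this is automatic in the split semisimple setting, since both are governed by the same multiplicity formula and the two $\Hom$-space computations above differ only by swapping the two arguments of a $\Hom$ between irreducibles, hence yield the same dimensions.
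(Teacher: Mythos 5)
Your proposal is correct and is essentially the paper's own proof: the paper likewise computes $\Hom_{\R_n}(i\text{-}\Ind^n_{n-1}M^{(\lambda^1,1^k)},M^{(\mu,1^l)})$ and $\Hom_{\R_n}(M^{(\mu,1^l)},i\text{-}\widehat{\Ind}^n_{n-1}M^{(\lambda^1,1^k)})$ via the adjunctions, rewrites the restriction using Proposition \ref{ires}, and invokes semisimplicity to identify both inductions with $M^{(\lambda^1[i^+],1^k)}$ (and analogously for $i=\infty$). No gaps.
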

\begin{proof}To prove (i), let $i\in\ZZ$. For any $(\mu,1^l)$ bipartition of $n$ consider the irreducible $M^{(\mu,1^l)}$, then we have
\begin{align*}\Hom_{\R_n}(i\text{-}\Ind^n_{n-1}M^{(\lambda^1,1^k)},M^{(\mu,1^l)})&=
\Hom_{\R_{n-1}}(M^{(\lambda^1,1^k)},i\text{-}\Res^n_{n-1}M^{(\mu,1^l)}) \\
&= \Hom_{\R_{n-1}}(M^{(\lambda^1,1^k)},M^{(\mu[i^-],1^l)}). \end{align*}
This equals zero unless $k=l$ and $\lambda^1=\mu[i^-]$, in which case it is a one dimensional space. Since  $\lambda^1=\mu[i^-]$ if and only if $\lambda^1[i^+]=\mu$ and $\R_{n-1}$ is semisimple, it follows that $$i\text{-}\Ind^n_{n-1}M^{(\lambda^1,1^k)}=M^{(\lambda^1[i^+],1^k)}.$$
In the same way
\begin{align*}\Hom_{\R_n}(M^{(\mu,1^l)},i\text{-}\widehat\Ind^n_{n-1}M^{(\lambda^1,1^k)})&=
\Hom_{\R_{n-1}}(i\text{-}\Res^n_{n-1}M^{(\mu,1^l)},M^{(\lambda^1,1^k)}) \\
&= \Hom_{\R_{n-1}}(M^{(\mu[i^-],1^l)},M^{(\lambda^1,1^k)}). \end{align*}
which equals zero unless $k=l$ and $\lambda^1=\mu[i^-]$, in which case it is a one dimensional space. Hence, as above $$i\text{-}\Ind^n_{n-1}M^{(\lambda^1,1^k)}=M^{(\lambda^1[i^+],1^k)}.$$
The exact same argument works for $\infty\text{-}\Ind^n_{n-1}$ and $\infty\text{-}\widehat{Ind}^n_{n-1}$ to prove (ii).
\end{proof}
\begin{cor}For all $i\in\ZZ\cup\{\infty\}$, we have an isomorphims of functors 
$$i\text{-}\Ind^n_{n-1}\simeq i\text{-}\widehat{\Ind}^n_{n-1}.$$ It follows that  $i$-$\Ind^n_{n-1}$ is an exact functor.
\end{cor}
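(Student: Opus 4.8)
The plan is to read this off Proposition~\ref{iind} together with the semisimplicity of $\R_{n-1}$. Since $\R_{n-1}$ is semisimple, every finitely generated $\R_{n-1}$-module is a finite direct sum of the irreducibles $M^{(\lambda^1,1^k)}$, and any additive $\CC(\q)$-linear functor out of $\R_{n-1}\text{-}\Mod$ is determined up to natural isomorphism by its effect on these simple objects (there being no nonzero morphisms between non-isomorphic simples, the action on morphisms carries no extra data). So it suffices to combine the two computations already made: for $i\in\ZZ$ Proposition~\ref{iind} gives $i\text{-}\Ind^n_{n-1}M^{(\lambda^1,1^k)}=M^{(\lambda^1[i^+],1^k)}=i\text{-}\widehat{\Ind}^n_{n-1}M^{(\lambda^1,1^k)}$, and for $i=\infty$ it gives $i\text{-}\Ind^n_{n-1}M^{(\lambda^1,1^k)}=M^{(\lambda^1,1^{k+1})}=i\text{-}\widehat{\Ind}^n_{n-1}M^{(\lambda^1,1^k)}$; hence $i\text{-}\Ind^n_{n-1}\simeq i\text{-}\widehat{\Ind}^n_{n-1}$ as functors.

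For the exactness assertion I would argue as follows. By construction $i\text{-}\widehat{\Ind}^n_{n-1}$ is the right adjoint of the exact functor $i\text{-}\Res^n_{n-1}$, hence it is left exact, while $i\text{-}\Ind^n_{n-1}$, being the left adjoint of $i\text{-}\Res^n_{n-1}$, is right exact; since the two are isomorphic, the common functor is both left and right exact, i.e.\ exact. Alternatively one can bypass adjunctions entirely and invoke semisimplicity of $\R_{n-1}$ directly: every short exact sequence of $\R_{n-1}$-modules splits, so \emph{every} additive functor on $\R_{n-1}\text{-}\Mod$ is automatically exact.

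There is essentially no serious obstacle here: the statement is a formal corollary of Proposition~\ref{iind}. The only point deserving a word of care is upgrading the coincidence of the two induction functors from a pointwise isomorphism on objects to an honest isomorphism of functors; this is precisely what the ``determined on simples'' principle for additive functors on a semisimple category supplies, and one could also obtain it abstractly from the fact that the relevant Hecke-type algebras are symmetric algebras, so that induction and coinduction along $\R_{n-1}\rinto\R_n$ agree.
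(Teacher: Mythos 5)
Your proposal is correct and follows essentially the same route as the paper: semisimplicity of $\R_{n-1}$ reduces the isomorphism of functors to their agreement on the irreducibles (which is Proposition \ref{iind}), and exactness follows because the common functor is both a left and a right adjoint of the exact functor $i$-$\Res^n_{n-1}$. The extra care you take in justifying the "determined on simples" step, and the alternative exactness argument via semisimplicity, are fine but not needed beyond what the paper already records.
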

\begin{proof}Since $\R_{n-1}$ is semisimple, every module decomposes as a direct sum of irreducibles. The two functors agree on the irreducibles, therefore are isomorphic. Since $i$-$\Ind^n_{n-1}$ is both left and right adjoint to $i$-$\Res^n_{n-1}$, it is an exact functor.
\end{proof}
\subsection{Categorification}
We denote by $G_0(\R_n)=\K_0(\R_n$-$\Mod)$ the Grothendieck group of the abelian category $\R_n$-$\Mod$, and for $M\in\R_n$-$\Mod$, we denote by $[M]$ its class in $G_0(\R_n)$. Remark that given our description of the simple modules of $\R_n$, we have an isomorphism
$$G_0(\R_n)\simeq \bigoplus_{\ull}\ZZ[M^{\ull}]$$
where the sum is taken over $\ull=(\lambda^1,\lambda^2)$ with $\lambda^2$ a single column. 
For simplicity, we will work over the complex numbers and we will let
$$ \G(\R_n)=\CC\otimes_{\ZZ} G_0(\R_n)\simeq \bigoplus_{\ull}\CC[M^{\ull}]$$
with $\ull$ as above.
For $i\in\ZZ\cup\{\infty\}$, we have the exact functors $i\text{-}\Res^n_{n-1}M$ and $i\text{-}\Ind^n_{n-1}M$ which induce  linear maps
$$ e_{i,n}:\G(\R_n)\to \G(\R_{n-1}) \quad\quad\quad [M]\mapsto [i\text{-}\Res^n_{n-1}M];$$
$$ f_{i,n}:\G(\R_{n-1})\to \G(\R_{n}) \quad\quad\quad [M]\mapsto [i\text{-}\Ind^n_{n-1}M].$$
\begin{defn}We introduce the notation $\G(\R)=\bigoplus_{n\geq 0}G_0(\R_n)$, then we define the following linear operators in $\End(\G(\R))$:
$$e_i=\sum_{n\geq 0}e_{i,n};\quad f_i=\sum_{n\geq 0}f_{i,n};\quad\forall i\in\ZZ\cup\{\infty\}.$$
\end{defn}
\begin{lem}\label{lemcomm}For all $i\in\ZZ$, we have the commutation relations
$$ [e_i,e_\infty]=[e_i,f_\infty]=[f_i,e_\infty]=[f_i,f_\infty]=0.$$
\end{lem}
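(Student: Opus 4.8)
The plan is to reduce the four commutation identities to an elementary combinatorial check on the distinguished basis of $\G(\R)$ consisting of the classes $[M^{(\lambda^1,1^k)}]$ of the irreducible modules, using the explicit formulas for the refined restriction and induction functors proved in Propositions \ref{ires} and \ref{iind}. The crucial observation is that, for $i\in\ZZ$, the operators $e_i$ and $f_i$ act only on the first component $\lambda^1$ of a bipartition --- removing, respectively adding, the (unique, if it exists) box of content $i$ --- while leaving the single-column second component $1^k$ unchanged; dually, $e_\infty$ and $f_\infty$ act only on the second component, sending $1^k\mapsto 1^{k-1}$ respectively $1^k\mapsto 1^{k+1}$, and leave $\lambda^1$ unchanged. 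Since these are modifications of independent pieces of combinatorial data, the two families of operators commute, and the content of the proof is simply to write this out, paying attention to the cases where an intermediate module is zero.

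Concretely, I would fix $(\lambda^1,1^k)$ and compare $e_i\,e_\infty\,[M^{(\lambda^1,1^k)}]$ with $e_\infty\,e_i\,[M^{(\lambda^1,1^k)}]$. By Proposition \ref{ires}(ii), $e_\infty[M^{(\lambda^1,1^k)}]=[M^{(\lambda^1,1^{k-1})}]$ (and $=0$ if $k=0$), and then by Proposition \ref{ires}(i) this becomes $[M^{(\lambda^1[i^-],1^{k-1})}]$ (and $=0$ if $\lambda^1$ has no removable box of content $i$); performing the operations in the other order produces first $[M^{(\lambda^1[i^-],1^k)}]$ and then $[M^{(\lambda^1[i^-],1^{k-1})}]$. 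The two results coincide --- in every case, including all the vanishing ones --- because the box of content $i$ that Proposition \ref{ires}(i) deletes is determined by $\lambda^1$ alone, independently of the length of the column. As the $[M^{(\lambda^1,1^k)}]$ form a basis of $\G(\R)$ and all the operators involved are linear, this gives $[e_i,e_\infty]=0$.

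The remaining three identities follow from the identical argument with the appropriate substitutions: $[e_i,f_\infty]=0$ by computing $[M^{(\lambda^1[i^-],1^{k+1})}]$ in the two orders using \ref{ires}(i) and \ref{iind}(ii); $[f_i,e_\infty]=0$ by computing $[M^{(\lambda^1[i^+],1^{k-1})}]$ using \ref{iind}(i) and \ref{ires}(ii); and $[f_i,f_\infty]=0$ by computing $[M^{(\lambda^1[i^+],1^{k+1})}]$ using \ref{iind}(i) and \ref{iind}(ii). In each case one checks that the operation on $\lambda^1$ and the shift of the column index $k$ by $\pm1$ commute, with matching zero loci.

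I do not expect a genuine obstacle here; the only point demanding care is the bookkeeping of the degenerate cases --- confirming that whenever $\lambda^1$ has no removable, respectively addable, box of content $i$, both orders of composition vanish, and that $\lambda^1[i^{\pm}]$ truly does not depend on $k$ (which is clear, the content of a box of $\lambda^1$ being computed inside $\lambda^1$). If a more conceptual argument is preferred, one may instead note that $e_i,f_i$ are assembled from the $L_n$-eigenspace at $\q^i$ together with the inclusion $\R_{n-1}\hookrightarrow\R_n$, whereas $e_\infty,f_\infty$ use the $L_n$-eigenspace at $0$; since $L_{n-1}$ and $L_n$ commute in $\R_n$ and $\R_{n-2}$ centralizes both, the relevant double eigenspaces are unambiguous and the commutators vanish. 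Either way the verification is routine.
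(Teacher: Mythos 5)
Your proposal is correct and follows exactly the paper's route: the author likewise disposes of the lemma by noting that it "follows immediately from the formulas for the action of $e_i, f_i, e_\infty, f_\infty$ on the simple $\R_n$-modules in Propositions \ref{ires} and \ref{iind}." You have simply written out in full the basis-by-basis check (including the vanishing cases) that the paper leaves implicit.
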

\begin{proof}This follows immediately from the formulas for the action of $e_i, f_i,e_\infty,f_\infty$ on the simple $\R_n$-modules in Propositions \ref{ires} and \ref{iind}.
\end{proof}
\begin{defn}For $k\geq 0$, let $\F_k=\displaystyle\frac{\ker (e_\infty)^{k+1}}{\ker (e_\infty)^k}$.
\end{defn}
\begin{rem}From Proposition \ref{ires}, we have $\F_k\simeq\CC\{[M^{(\lambda,1^k)}]\}_\lambda\subset\G(\R)$. Thus 
$$\G(\R)\simeq\bigoplus_{k\geq 0}\F_k.$$
\end{rem}
\begin{prop}\label{lemma1}For all $i\in\ZZ$, $e_i(\F_k)\subset \F_k$, $f_i(\F_k)\subset \F_k$. 
In addition, $f_\infty:\F_k\to\F_{k+1}$ is an isomorphism, and $e_\infty=(f_\infty)^{-1}:\F_{k+1}\to\F_{k}$ for all $k\geq 0$. 

For all $k\geq 0$, $\F_k$ is isomorphic to the (charge zero) Fock space as a representation of $\mathfrak{gl}_\infty$, with the action given by  $\{e_i,f_i|i\in\ZZ\}$.

Hence, as $\mathfrak{gl}_\infty$-representations, $\G(\R)$ is the direct sum of countably many copies of the Fock space.
\end{prop}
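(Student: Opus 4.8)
The plan is to show that each subspace $\F_k\subset\G(\R)$ is, as a vector space with the operators $e_i,f_i$ ($i\in\ZZ$), isomorphic to the charge zero $\gL_\infty$-Fock space, and then to assemble these into the direct sum decomposition $\G(\R)\simeq\bigoplus_{k\geq 0}\F_k$ already noted in the Remark. Throughout, one works with the explicit basis $\{[M^{(\lambda,1^k)}]\}_\lambda$ of $\F_k$, indexed by ordinary partitions $\lambda$, which is exactly the standard indexing of the Fock space basis by Young diagrams.

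First I would check that $\F_k$ is stable under all $e_i$ and $f_i$, $i\in\ZZ$. By Propositions \ref{ires}(i) and \ref{iind}(i), $e_i$ sends $[M^{(\lambda,1^k)}]$ to $[M^{(\lambda[i^-],1^k)}]$ and $f_i$ sends it to $[M^{(\lambda[i^+],1^k)}]$, so neither operator changes the exponent $k$; this gives the stability. Next I would verify that $f_\infty:\F_k\to\F_{k+1}$ is a bijection with inverse $e_\infty$: by Propositions \ref{ires}(ii) and \ref{iind}(ii), $f_\infty[M^{(\lambda,1^k)}]=[M^{(\lambda,1^{k+1})}]$ and $e_\infty[M^{(\lambda,1^{k+1})}]=[M^{(\lambda,1^{k})}]$, and these are mutually inverse bijections of bases. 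Combined with Lemma \ref{lemcomm}, which says $e_\infty,f_\infty$ commute with every $e_i,f_i$, this identifies the $\gL_\infty$-module structure on $\F_{k+1}$ with that on $\F_k$ via $f_\infty$; hence it suffices to treat $\F_0$.

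For $\F_0\simeq\CC\{[M^{(\lambda,\oo)}]\}_\lambda$ (the $\lambda^2$ empty column), the operators act by $e_i[M^{(\lambda,\oo)}]=[M^{(\lambda[i^-],\oo)}]$ (remove the unique removable box of content $i$, or $0$) and $f_i[M^{(\lambda,\oo)}]=[M^{(\lambda[i^+],\oo)}]$ (add the unique addable box of content $i$, or $0$). These are precisely the Chevalley generator formulas for the level one $\gL_\infty$-Fock space on the basis of partitions: $f_i$ adds an $i$-box, $e_i$ removes an $i$-box, the diagonal Cartan element records the box counts, and the Serre relations and $[e_i,f_j]=\delta_{ij}h_i$ hold because at most one box of each content is addable/removable to a given diagram. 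I would therefore just invoke the standard realization of the Fock space (e.g.\ as in the references already cited for the Bratteli diagram, or Ariki's book), observing that our action matches it term by term; the relations need not be rechecked by hand. Finally, $\G(\R)=\bigoplus_{k\geq 0}\F_k$ as $\gL_\infty$-modules because each summand is stable under the action, giving the direct sum of countably many copies of the Fock space.

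The one point requiring care — the main ``obstacle,'' though it is really a matter of bookkeeping rather than difficulty — is matching conventions precisely: confirming that the combinatorics of ``adding/removing a box of content $i$'' in Proposition \ref{ires}--\ref{iind} is literally the $\gL_\infty$ action on the Fock space with the indexing by $i\in\ZZ$ (not a shifted or reflected copy), and that the extra index $\infty$ acts as the charge-shifting operator intertwining consecutive Fock spaces. Once the dictionary is pinned down, everything follows formally from Propositions \ref{ires} and \ref{iind} and Lemma \ref{lemcomm}.
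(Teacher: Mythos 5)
Your proposal is correct and follows essentially the same route as the paper: invariance of $\F_k$ from the commutation relations and the explicit formulas, and identification with the Fock space by matching the formulas of Propositions \ref{ires} and \ref{iind} with the Chevalley generator action on partitions, citing Ariki. The only cosmetic difference is that you explicitly use $f_\infty$ to reduce all $\F_k$ to $\F_0$, whereas the paper observes directly that each $\F_k$ carries the Fock space formulas.
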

\begin{proof}The space $\F_k$ is invariant under $\{e_i, f_i\}_{i\in\ZZ}$ by Lemma \ref{lemcomm}. The result then follows directly from the formulas for the action of $e_i,f_i$ of Propositions \ref{ires} and \ref{iind}. In fact, for each fixed $\F_k$ those are exactly the formulas that define the action of the Chevalley generators of $\mathfrak{gl}_\infty$ on the basis of partitions 
of the Fock space. This is the same as what is proved by Ariki in \cite[Thm 4.4]{Ar3}.
\end{proof}
\section{Cellular Structure}\label{cellstruct}
In all the previous sections we have focused on the generic mirabolic Hecke algebra $\R_n$ and its semisimple specializations.
\begin{lem}$\R_n(q)$ is semisimple if and only if $[n]_q!\neq0$. ( $[n]_q!$ was defined in the statement of Theorem \ref{ariki}).
\end{lem}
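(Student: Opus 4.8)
The plan is to realise $\R_n(q)$, at every specialisation $\q\mapsto q$, as a quotient of the cyclotomic Hecke algebra $H_n(1,0;q)$, and at the same time to exhibit the ordinary Iwahori--Hecke algebra $\HH_n(q)$ as a quotient of $\R_n(q)$; the two halves of the statement then follow from Ariki's semisimplicity criterion (Theorem~\ref{ariki}).

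First I would observe that the presentation of Lemma~\ref{newpres}, together with its comparison to Definition~\ref{cyclo} carried out in Section~\ref{rnquot}, takes place over $\CC[\q,\q^{-1}]$ and never uses the idempotent $y_{(\oo,2)}$ (whose formula needs $\q+1$ invertible): all that is used is that $\R_n$ is $H_n(1,0)$ with the single extra relation~\eqref{nrel7} imposed, under $e\leftrightarrow X$. Letting $z\in H_n(1,0;q)$ denote the difference of the two sides of~\eqref{nrel7}, we therefore have $\R_n(q)\simeq H_n(1,0;q)/\langle z\rangle$ for every $q\in\CC^\times$. This already gives the ``if'' direction: when $[n]_q!\neq 0$, Theorem~\ref{ariki} with $r=2$, $u_1=1$, $u_2=0$ shows $H_n(1,0;q)$ is semisimple --- the conditions $q^d u_i\neq u_j$ for $i\neq j$ being automatic, since $q^d\neq 0$ and $0\neq 1$ --- and any quotient of a semisimple algebra by a two-sided ideal is again semisimple.

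For the ``only if'' direction I would argue by contraposition. The key point is that substituting $X=1$ turns relation~\eqref{nrel7} into a tautology, so the element $z$ lies in the two-sided ideal $\langle X-1\rangle$; since $H_n(1,0;q)/\langle X-1\rangle$ is just $\HH_n(q)$, it follows that $\HH_n(q)$ is a quotient of $\R_n(q)\simeq H_n(1,0;q)/\langle z\rangle$. (One can also write this quotient map down directly as $T_0\mapsto q-1$, $T_i\mapsto T_i$ for $i\geq 1$, and check relations~\eqref{rel1} and~\eqref{rel4}--\eqref{rel5}.) Now if $[n]_q!=0$ then $\HH_n(q)$ is not semisimple --- this is classical, and is also the case $r=1$, $u_1=1$ of Theorem~\ref{ariki}, since there $H_n(u_1;q)=\HH_n(q)$ --- and an algebra with a non-semisimple quotient cannot itself be semisimple.

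The step I expect to need the most care is keeping everything uniform in $q$: one should work throughout with the presentation-level identity $\R_n(q)=H_n(1,0;q)/\langle z\rangle$ rather than invoking the block-theoretic isomorphism $\R_n\simeq H_n(1,0)/I_n^{(\oo,2)}$ of Section~\ref{rnquot}, since the latter is phrased through the idempotent $y_{(\oo,2)}$ and so a priori excludes $q=-1$ --- which is exactly one of the non-semisimple specialisations that must be covered. Once that is in place, what remains is a short verification of finitely many relations.
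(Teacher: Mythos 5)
Your proof is correct and follows essentially the same route as the paper: semisimplicity of $H_n(1,0;q)$ via Theorem~\ref{ariki} passes to the quotient $\R_n(q)$ when $[n]_q!\neq 0$, and the surjection $\R_n(q)\ronto\HH_n(q)$ (the paper writes it as $e\mapsto 1$, you as killing $X-1$) rules out semisimplicity when $[n]_q!=0$. Your extra care about keeping the quotient presentation uniform in $q$ rather than going through the idempotent $y_{(\oo,2)}$ is a sensible refinement, but it does not change the argument.
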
 
\begin{proof}If $[n]_q!\neq 0$, $H_n(1,0;q)$ is semisimple by Theorem \ref{ariki}, and therefore so is $\R_n(q)$ by Proposition \ref{isoiso}. When $[n]_q!=0$, then the Hecke algebra $\HH_n(q)$ is not semisimple. Notice that we have a surjective map $\R_n(q)\ronto \HH_n(q)$ given by $e\mapsto 1$. Then in this case $\R_n(q)$ also cannot be semisimple since it has a non-semisimple quotient.\end{proof} 
To study what happens in the non-semisimple case, it would be extremely helpful to establish that $\R_n(q)$ is a cellular algebra in the sense of Graham and Lehrer (see \cite{GL}). Then the general theory of cellular algebras could be used to describe the representation theory of $\R_n(q)$. 

We give a reminder of the definition of a cellular algebra and a cellular basis for $H(1,0;q)$, following \cite{DJM}.
\begin{defn}\label{cellular}Let $A$ be an algebra $A$ over a ring $\kk$, with a $\kk$-basis
$$ \{C^{\lambda}_{\st,\tu}|\lambda\in\Lambda, \st,\tu\in M(\lambda)\} $$
where $\Lambda$ a partially ordered set, and $M(\lambda)$ a finite set for each $\lambda\in\Lambda$.
We say that $A$ is a \emph{cellular algebra} (and we call the basis we just defined a \emph{cellular basis}) if it satisfies the following properties:
\begin{itemize}
\item[$(C1)$] The $\kk$-linear involution $*:A\to A$ given by $\left(C^\lambda_{\st,\tu}\right)^*=C^\lambda_{\tu,\st}$ is an anti-isomorphism of algebras.
\item[$(C2)$] For each $a\in A$ we have
$$ a C^\lambda_{\st,\tu}\equiv \sum_{\st'\in M(\lambda)} r_a(\st',\st)C^\lambda_{\st',\tu} \quad\pmod {A(>\lambda)}$$
where $ r_a(\st',\st)$ is independent of $\tu$ and $A(>\lambda)$ is the $\kk$-submodule of $A$ generated by $\{C^\mu_{\pp,\qq}|\mu>\lambda,\quad \pp,\qq\in M(\mu)\}$.
\end{itemize}
By applying the antiinvolution $*$, we can also substitute $(C2)$ with the equivalent condition
\begin{itemize}
\item[$(C2')$] For each $a\in A$ we have
$$  C^\lambda_{\st,\tu}a\equiv \sum_{\tu'\in M(\lambda)}C^\lambda_{\st,\tu'} r_a(\tu',\tu) \quad\pmod {A(>\lambda)}$$
where $ r_a(\tu',\tu)$ is independent of $\tu$.
\end{itemize}
In the case of the cyclotomic algebra $H_n(1,0;q)$, we take $\Lambda=\Lambda(n)=\{(\lambda^1,\lambda^2)\}$ the set of all bipartitions of $n$, with the \emph{dominance} partial order $\trianglelefteq$ given by
$$ \ull\trianglelefteq \umu\quad\text{ if }\quad \sum_{i=1}^{k-1}|\lambda^i|+\sum_{i=1}^j \lambda^k_i\leq  \sum_{i=1}^{k-1}|\mu^i|+\sum_{i=1}^j \mu^k_i$$
for all $1\leq k\leq 2$, $j\geq 0$. If $\ull\trianglelefteq\umu$ and $\ull\neq\umu$ we write $\ull\triangleleft\umu$.

We then take $M(\ull)=\T(\ull)$ to be the set of all standard bitableaux of shape $\ull$.

To define the cellular basis we need to define some elements. For a bipartition $\ull$, we define the \emph{superstandard} bitableau $\mathfrak{t}^{\ull}$ to be the standard bitableau of shape $\ull$ filled with the numbers $1,\ldots,n$ in order starting from the first row of the first tableau and going down the rows, then filling the rows of the second tableau. 
\begin{exa}If $\ull=((2,2),(3,1,1))$ then
$$\mathfrak{t}^{\ull}=\left( \young(12,34)\text{ }, \young(567,8,9)\right).$$
\end{exa}
We then have the Young subgroup $S_{\ull}=S_{\lambda^1}\times S_{\lambda^2}$ of the symmetric group $S_n$ which is the stabilizer of the rows of $\ttl$. We define 
$$x_{\ull}=\sum_{w\in S_{\ull}}T_w;\qquad u_{\ull}=\prod_{i=1}^{|\lambda^1|}\tilde{L}_i;\qquad m_{\ull}=x_{\ull}u_{\ull}=u_{\ull}x_{\ull}.$$
For a standard bitableau $\st$, define $d(\st)\in S_n$ to be the permutation such that 
$$\st=d(\st)\ttl.$$
\end{defn}
\begin{thm}[{\cite[Thm 3.26]{DJM}}] A cellular basis for $H_n(1,0;q)$ is given by
$$ C^{\ull}_{\st,\tu}:= \left(T_{d(\st)}\right)^*m_{\ull}T_{d(\tu)}$$
with $*$ being the anti-involution that fixes the generators $X,T_1,\ldots,T_{n-1}$.
\end{thm}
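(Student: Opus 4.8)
The plan is to reduce the claim about $H_n(1,0;q)$ to the general cellularity theorem for cyclotomic Hecke algebras, i.e. to show that the proposed basis $\{C^{\ull}_{\st,\tu}\}$ is exactly the Dipper--James--Mathas cellular basis in the special case $r=2$, $u_1=1$, $u_2=0$. First I would recall the Ariki--Koike presentation and note that our $\HH_n(1,0)$ is obtained from the generic cyclotomic Hecke algebra $\HH_n(u_1,u_2)$ by specializing $u_1=1$, $u_2=0$, so that $X^2=X$. The Jucys--Murphy elements $\tilde L_i=\q^{1-i}T_{i-1}\cdots T_1 X T_1\cdots T_{i-1}$ then match (up to the normalization already fixed in the excerpt) the elements $L_i$ used by DJM to build the cyclotomic Murphy elements, and the products $u_{\ull}=\prod_{i=1}^{|\lambda^1|}\tilde L_i$ are the specialization of the ``$\prod(L_i-u_2)$'' factors that appear in \cite{DJM}: with $u_2=0$ those factors are literally $\prod_{i\le|\lambda^1|}\tilde L_i$. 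Hence $m_{\ull}=x_{\ull}u_{\ull}$ coincides with the DJM element $m_{\ull}$ for the parameter choice $(1,0)$, and $C^{\ull}_{\st,\tu}=(T_{d(\st)})^* m_{\ull}T_{d(\tu)}$ is exactly their basis element.

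Next I would verify the two structural hypotheses of Definition \ref{cellular}. For $(C1)$: the anti-involution $*$ fixing $X,T_1,\ldots,T_{n-1}$ sends $\tilde L_i\mapsto\tilde L_i$ (since the defining word $T_{i-1}\cdots T_1 X T_1\cdots T_{i-1}$ is palindromic), so it fixes $u_{\ull}$ and $x_{\ull}$, hence $m_{\ull}$; therefore $(C^{\ull}_{\st,\tu})^* = T_{d(\tu)}^* m_{\ull}^* T_{d(\st)} = (T_{d(\tu)})^* m_{\ull} T_{d(\st)} = C^{\ull}_{\tu,\st}$, which is $(C1)$. For $(C2)$: this is the content of \cite[Thm 3.26]{DJM}, so I would simply quote it after checking that the specialization $u_1=1$, $u_2=0$ satisfies whatever separation/genericity hypotheses that theorem requires --- and here I should be careful, because DJM prove cellularity for \emph{all} choices of parameters (their construction is integral and works over an arbitrary commutative ring with the $u_i$ arbitrary elements), so no genericity is needed; the basis and the two axioms hold verbatim. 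The only genuinely new thing to confirm is that the set $\Lambda(n)$ of bipartitions with the dominance order $\trianglelefteq$ and the sets $\T(\ull)$ of standard bitableaux are the indexing data DJM use, and that $d(\st)$ defined by $\st=d(\st)\ttl$ is their distinguished coset representative; all of this is a matter of matching conventions.

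The main obstacle --- really the only point requiring care --- is the bookkeeping around the Jucys--Murphy normalization and the precise form of the ``$u_{\ull}$'' factor. In \cite{DJM} the cyclotomic Murphy element attached to the first component is a product of terms of the shape $(L_i - u_j)$ running over $j$ in a range determined by which component a box lies in; with $r=2$ and $u_2=0$ the component-$2$ factor $L_i - u_2$ degenerates to $L_i$ itself, and one must check that summing over exactly the first $|\lambda^1|$ indices reproduces the correct element and not, say, a product over the complementary index set. I would resolve this by writing out the $r=2$ case of the DJM definition explicitly, substituting $(u_1,u_2)=(1,0)$, and observing that the resulting element is $x_{\ull}\prod_{i=1}^{|\lambda^1|}\tilde L_i$ up to a unit in $\CC[\q,\q^{-1}]$ (and a unit does not affect the cellular-basis property). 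Once that identification is pinned down, the theorem follows immediately from \cite[Thm 3.26]{DJM} together with the palindromy computation for $(C1)$ above; no further work is needed.
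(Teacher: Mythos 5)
The paper offers no proof of this statement at all --- it is quoted verbatim as \cite[Thm 3.26]{DJM} --- and your argument is precisely the justification that citation implicitly relies on: specialize the Dipper--James--Mathas construction to $r=2$, $(u_1,u_2)=(1,0)$, check that $u_{\ull}=\prod_{i=1}^{|\lambda^1|}\tilde L_i$ is the degenerate form of their $\prod(L_i-u_2)$ factor, and note that their theorem holds integrally with no genericity hypothesis. Your convention-matching (including the palindromy check for $(C1)$) is correct and is essentially the same route, just spelled out.
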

We consider the elements $ {\bar{C}^{\ull}_{\st,\tu}}\in\R_n(q)$ to be the images of the cellular basis of $H_n(1,0;q)$ in the quotient. Let $\Lambda'(n)\subset\Lambda(n)$ be the subset
$$ \Lambda'(n)=\{(\lambda^1,\lambda^2)\in\Lambda(n)|\lambda^2\text{ is a single column}\}$$
with the induced partial order from $\Lambda(n)$.
\begin{conj}\label{cellconj}The elements 
$$ \{ {\bar{C}^{\ull}_{\st,\tu}}|\ull\in\Lambda'(n)\}$$
are a cellular basis for $\R_n(q)$, which is then a cellular algebra.
\end{conj}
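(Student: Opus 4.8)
The plan is to show that the natural images $\bar C^{\ull}_{\st,\tu}$ (for $\ull\in\Lambda'(n)$, that is $\lambda^2$ a single column) form a $\CC(\q)$-basis of $\R_n(q)$ and that they inherit the cellular structure of $H_n(1,0;q)$. The first and conceptually cleanest step is to identify the two-sided ideal $I_n^{(\oo,2)}$ with the span of the ``discarded'' cellular basis elements. Since $H_n(1,0;q)$ is cellular with basis indexed by all bipartitions $\Lambda(n)$, and the quotient map $\pi\colon H_n(1,0;q)\ronto\R_n(q)$ has kernel $I_n^{(\oo,2)}$, I would aim to prove
$$I_n^{(\oo,2)}=\operatorname{span}_{\CC(\q)}\{C^{\ull}_{\st,\tu}\mid \ull\notin\Lambda'(n)\},$$
i.e.\ the kernel is exactly the span of cell modules indexed by bipartitions \emph{not} of the form $(\mu^1,1^m)$. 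The inclusion ``$\supseteq$'' should come from the theory of cellular algebras: by Proposition \ref{cyclrep}, $\R_n(q)$ (in the semisimple range) has exactly the irreducibles $M^{\unu}$ with $\unu\in\Lambda'(n)$, so the annihilator of every such representation contains all $C^{\ull}_{\st,\tu}$ with $\ull$ not above some $\unu\in\Lambda'(n)$; but ``$(\oo,2)\subset\ull$'' is exactly the condition ``$\ull\notin\Lambda'(n)$'', and one checks that $\ull\supseteq(\oo,2)$ forces $\ull$ to be $\trianglerighteq$ nothing in $\Lambda'(n)$ in the relevant sense, or more directly that $y_{(\oo,2)}$ lies in the span of high cells. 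The reverse inclusion, or equivalently a dimension count, then forces equality.

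Granting that identification, condition $(C1)$ is immediate: the anti-involution $*$ of $H_n(1,0;q)$ fixes $X,T_1,\dots,T_{n-1}$, hence descends to $\R_n(q)$ (this is exactly the anti-involution $^\star$ of Remark \ref{antiinv}, which fixes $e,T_1,\dots,T_{n-1}$ by the remark following Theorem \ref{solomon}), and it sends $\bar C^{\ull}_{\st,\tu}$ to $\bar C^{\ull}_{\tu,\st}$ since $*$ does so upstairs and preserves $I_n^{(\oo,2)}$ (a two-sided $*$-stable ideal). For $(C2)$: given $a\in\R_n(q)$, lift it to $\tilde a\in H_n(1,0;q)$; the cellular axiom upstairs gives $\tilde a\,C^{\ull}_{\st,\tu}\equiv\sum_{\st'}r_{\tilde a}(\st',\st)C^{\ull}_{\st',\tu}\pmod{H(>\ull)}$, and applying $\pi$ yields $a\,\bar C^{\ull}_{\st,\tu}\equiv\sum_{\st'}r_{\tilde a}(\st',\st)\bar C^{\ull}_{\st',\tu}\pmod{\R_n(q)(>\ull)}$, where $\R_n(q)(>\ull)$ is the span of the $\bar C^{\umu}_{\pp,\qq}$ with $\umu\triangleright\ull$ and $\umu\in\Lambda'(n)$ (the terms with $\umu\notin\Lambda'(n)$ being zero by the kernel identification). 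The coefficients $r_{\tilde a}(\st',\st)$ depend only on the image $a$, not on the lift, precisely because changing the lift changes $\tilde a$ by an element of $I_n^{(\oo,2)}$, which kills $\bar C^{\ull}_{\st,\tu}$.

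The main obstacle is the kernel identification in the \emph{non-semisimple} case, i.e.\ when $[n]_q!=0$. For generic $\q$ (or semisimple specializations) one has a clean decomposition into matrix algebras and can count dimensions, as in Proposition \ref{cyclrep}; but Conjecture \ref{cellconj} is stated for all $\R_n(q)$, and there the argument via irreducibles is not available. One would instead need to show directly, by a computation with Jucys--Murphy elements and the explicit form $m_{\ull}=x_{\ull}u_{\ull}$, that $C^{\ull}_{\st,\tu}\in I_n^{(\oo,2)}$ whenever $\lambda^2$ has more than one column --- for instance by exhibiting $u_{(\oo,2)}$ (or a power of it, up to invertible scalar) as a factor, after conjugating by suitable $T_w$, of each such $m_{\ull}$, using that $(\oo,2)\subset\ull$ combinatorially. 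Controlling that the resulting spanning set of $\R_n(q)$ is actually a \emph{basis} (linear independence) without semisimplicity is the crux: one expects to need a ``straightening''/filtration argument showing the $\bar C^{\ull}_{\st,\tu}$ with $\ull\in\Lambda'(n)$ are independent, perhaps by comparing with the known $\CC(\q)$-dimension $\sum_{k=0}^n k!\binom nk^2$ of $\R_n$ and the count $\sum_{\ull\in\Lambda'(n)}|\T(\ull)|^2$. The excerpt indicates this is only carried out for $\R_2$; a full proof would require making the ideal-versus-cells dictionary work integrally (over $\CC[\q,\q^{-1}]$) and at roots of unity, which is where I expect the real work to lie.
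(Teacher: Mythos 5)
Your central step---the identification
$$I_n^{(\oo,2)}=\operatorname{span}_{\CC(\q)}\{C^{\ull}_{\st,\tu}\mid \ull\notin\Lambda'(n)\}$$
---is false, and this is not a repairable detail but the point where your route diverges from what actually happens. Already for $n=2$ the only discarded basis element is $C^{(\oo,2)}_{\mathfrak{t},\mathfrak{t}}=x_{(\oo,2)}=1+T_1$, which does \emph{not} lie in $I_2^{(\oo,2)}$: its image in $\R_2(q)$ acts by $1+q\neq 0$ on the one-dimensional module $M^{(2,\oo)}$. (The kernel is instead spanned by the idempotent $y_{(\oo,2)}$ of Proposition \ref{idemp}, which involves $X$.) More structurally, the span of $\{C^{\ull}_{\st,\tu}\mid\ull\notin\Lambda'(n)\}$ is in general not even a two-sided ideal, because $\Lambda(n)\setminus\Lambda'(n)$ is not upward-closed in the dominance order: for $n=2$ one has $(\oo,11)\vartriangleleft(\oo,2)\vartriangleleft(1,1)$ with $(\oo,2)\notin\Lambda'(2)$ but both neighbours in $\Lambda'(2)$. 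The dimension count that makes your identification look plausible ($\ull\notin\Lambda'(n)$ iff $(\oo,2)\subseteq\ull$, so the two spaces have equal dimension in the semisimple case) only reflects that the cellular basis is \emph{triangular}, not diagonal, with respect to the Wedderburn decomposition; the Wedderburn block of $\unu$ is not literally spanned by the $C^{\unu}_{\st,\tu}$.

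Consequently your descent of $(C2)$ breaks at the phrase ``the terms with $\umu\notin\Lambda'(n)$ being zero'': those terms are not zero in $\R_n(q)$ and must instead be re-expressed. That re-expression is precisely the content of the paper's Conjecture \ref{basisconj}: each $\bar{C}^{\ull}_{\st,\tu}$ with $\ull\notin\Lambda'(n)$ should be a linear combination of $\bar{C}^{\umu}_{\pp,\qq}$ with $\ull\vartriangleleft\umu$ (and those $\umu$ need not lie in $\Lambda'(n)$ a priori; one iterates). The paper proves that Conjecture \ref{basisconj} implies Conjecture \ref{cellconj} --- spanning plus a dimension count gives the basis claim, $(C1)$ descends as you say, and $(C2)$ descends because the correction terms are strictly higher in the order --- and then verifies Conjecture \ref{basisconj} by hand for $n=2$, where explicitly $\bar{C}^{(\oo,2)}_{\mathfrak{t},\mathfrak{t}}=\bar{C}^{(1,1)}_{\mathfrak{s},\mathfrak{s}}+\bar{C}^{(1,1)}_{\mathfrak{s},\mathfrak{t}}+\bar{C}^{(1,1)}_{\mathfrak{t},\mathfrak{s}}+\bar{C}^{(1,1)}_{\mathfrak{t},\mathfrak{t}}-\bar{C}^{(2,\oo)}_{\mathfrak{t},\mathfrak{t}}$, visibly nonzero. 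Note also that the statement is a conjecture in the paper for $n\geq 3$; neither your argument nor the paper's establishes it in general, but your proposal additionally rests on a kernel description that is already wrong at $n=2$.
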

The strategy to prove the conjecture is to show the following:
\begin{conj}\label{basisconj} Each element $ {\bar{C}^{\ull}_{\st,\tu}}$, with $\ull\in\Lambda(n)\setminus\Lambda'(n)$ is a $\CC$-linear combination of elements $ {\bar{C}^{\umu}_{\pp,\qq}}$ with $\ull\vartriangleleft\umu$.
\end{conj}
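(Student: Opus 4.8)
The plan is to reduce Conjecture \ref{basisconj} to a statement about how the image of the cellular element $m_{\ull} = x_{\ull}u_{\ull}$ behaves modulo the ideal $I_n^{(\oo,2)}$ when $\ull = (\lambda^1,\lambda^2)$ has $\lambda^2$ not a single column. The key observation is that $\bar C^{\ull}_{\st,\tu} = (T_{d(\st)})^* \bar m_{\ull} T_{d(\tu)}$, and since left and right multiplication by the $T_{d(\st)}$, $T_{d(\tu)}$ preserve the filtration of $\R_n(q)$ by the spans of the $\bar C^{\umu}_{\pp,\qq}$ with $\umu \vartriangleright$ a given bipartition (this follows from condition $(C2)$ for the cellular basis of $H_n(1,0;q)$, which descends to the quotient because the ideal $A(>\ull)$ maps into the corresponding span downstairs), it suffices to show that $\bar m_{\ull}$ itself lies in the span of $\{\bar C^{\umu}_{\pp,\qq} : \ull \vartriangleleft \umu\}$. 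So the whole problem concentrates on a single element for each "bad" bipartition.

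Next I would exploit the structure of $I_n^{(\oo,2)}$. By Proposition \ref{cyclrep} and its proof, $I_n^{(\oo,2)}$ is generated, as a two-sided ideal, by the idempotent $y_{(\oo,2)}$, and in the generic algebra (or semisimple specialization) it equals $\bigoplus_{(\oo,2)\subset\unu}\End(M^{\unu})$. The crucial combinatorial fact is that a bipartition $\ull = (\lambda^1,\lambda^2)$ fails to be in $\Lambda'(n)$ precisely when $\lambda^2$ has at least two columns, i.e.\ when $\lambda^2$ contains $(\oo,2)$, which forces $\ull$ itself to contain $(\oo,2)$, so $M^{\ull}$ is a summand of $I_n^{(\oo,2)}$ and hence $\bar m_{\ull}$, living in the cell module indexed by $\ull$, should vanish or collapse into strictly higher cells. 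I would make this precise by examining the action of $m_{\ull}$ on cell modules: the DJM cell module $\Delta(\ull)$ for $H_n(1,0;q)$ has $\bar m_{\ull}$ acting through its image in the quotient, and the quotient kills exactly those cell modules $\Delta(\unu)$ with $(\oo,2)\subseteq\unu$. The technical content is to show that when $\ull \notin \Lambda'(n)$, the element $m_{\ull}$, reduced mod $I_n^{(\oo,2)}$, is congruent modulo $H_n(1,0;q)(>\ull)$ to an element already in that higher-order submodule — equivalently, that $m_{\ull} \in I_n^{(\oo,2)} + H_n(1,0;q)(>\ull)$.

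To establish that containment I would argue as follows: since the element $u_{\ull} = \prod_{i=1}^{|\lambda^1|}\tilde L_i$ projects $m_{\ull}$ onto the part of $H_n(1,0;q)$ where the first $|\lambda^1|$ strands "live in the first tableau", and $x_{\ull}$ symmetrizes over $S_{\lambda^1}\times S_{\lambda^2}$, the combined effect when $\lambda^2$ has two boxes in its first row is to symmetrize two strands that are forced into the second component; in $\HH_n(1,0)$ that symmetrization over two second-component strands, together with the relation $X^2 = X$, produces (up to strictly higher-order cellular terms) a multiple of the idempotent $y_{(\oo,2)}$ acting on those two strands. This is essentially the same mechanism by which the $q$-rook algebra kills $I_n^{(\oo,11)}$ in \cite{HR}: there the second-component strands are antisymmetrized. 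I would verify the base case $\ull$ minimal among $\Lambda(n)\setminus\Lambda'(n)$ by the explicit computation used in \cite{HR} and then induct up the dominance order, at each stage absorbing the correction terms into the inductive hypothesis.

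The main obstacle is precisely this last step: showing that the "extra symmetrization" in $m_{\ull}$ for $\ull\notin\Lambda'(n)$ genuinely lands in $I_n^{(\oo,2)}$ modulo higher cells, rather than merely outside the span of the $\bar C^{\umu}$ with $\umu\ntrianglerighteq\ull$. The difficulty is that $m_{\ull}$ mixes the two tableau components in a way that is not cleanly localized to a pair of strands, so one cannot directly quote a two-strand relation; one must track how the Jucys--Murphy factors in $u_{\ull}$ interact with the symmetrizer $x_{\ull}$ and with $d(\st)$, $d(\tu)$, and control the resulting commutator terms — all of which are only defined up to $H_n(1,0;q)(>\ull)$. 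This is why the paper only carries it out for $R_2$: there $\Lambda(2)\setminus\Lambda'(2) = \{(\oo,2)\}$ is a single maximal-in-its-class element, $m_{(\oo,2)}$ is essentially $X_1 X_2 \cdot(1+T_1)$ up to normalization, and one checks directly by the table preceding Proposition \ref{idemp} that its image in $\R_2(q)$ is zero, with no higher cells and no induction needed. Extending past $n=2$ would require a uniform argument for this absorption, which I expect to be the crux and is left open.
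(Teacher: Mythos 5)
First, a framing point: the statement you are proving is stated as an open conjecture in the paper; the author proves it only for $n=2$ and otherwise records reductions, so there is no complete argument to match. Your opening reduction --- that by conditions $(C2)$ and $(C2')$ it suffices to treat the single element $\bar{C}^{\ull}_{\ttl,\ttl}=\bar{m}_{\ull}$ for each $\ull\in\Lambda(n)\setminus\Lambda'(n)$, and that the conjecture for $m_{\ull}$ is equivalent to $m_{\ull}\in I_n^{(\oo,2)}+H_n(1,0;q)(>\ull)$ --- is exactly the paper's own reduction, and your identification of the remaining absorption problem as the crux agrees with the paper's assessment. The paper's only other tool is Lemma \ref{remm}, that $m_{\ull}\mapsto m_{(\lambda^1,(\lambda^2,1))}$ under $H_n(1,0;q)\rinto H_{n+1}(1,0;q)$, which propagates verified cases from $n$ to $n+1$; your proposed induction up the dominance order within a fixed $n$ is a different organizational device, and like the paper's, it is not carried out.

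There is, however, a concrete error in the one case that can actually be checked, and it undercuts the mechanism you propose in general. For $\ull=(\oo,2)$ one has $u_{(\oo,2)}=\prod_{i=1}^{|\lambda^1|}\tilde{L}_i=1$ (an empty product, since $\lambda^1=\oo$) and $x_{(\oo,2)}=1+T_1$, so $m_{(\oo,2)}=1+T_1$; the element $L_1L_2(1+T_1)$ you describe is $m_{(2,\oo)}$, the top cell. Moreover the image of $m_{(\oo,2)}$ in $\R_2(q)$ is \emph{not} zero: $1+T_1$ acts by $1+\q\neq 0$ on the surviving module $M^{(2,\oo)}$. What the paper verifies is the identity $1+T_1=T_1L_1T_1+T_1L_1+L_1T_1+L_1-(1+T_1)L_1L_2$ in $\R_2(q)$, i.e.\ $\bar{m}_{(\oo,2)}$ is a nontrivial combination of strictly higher cells. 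You appear to have conflated $m_{(\oo,2)}$ with the central idempotent $y_{(\oo,2)}$, which is the element that does vanish in the quotient; the cellular elements indexed by bad bipartitions do not lie in $I_n^{(\oo,2)}$, they merely become redundant. This is more than bookkeeping: your proposed mechanism (``the symmetrization over two second-component strands produces a multiple of $y_{(\oo,2)}$, so $m_{\ull}$ vanishes or collapses'') suggests the images should die, which is false already at $n=2$; the actual content of the conjecture is that the nonzero images land in the span of higher cells, and neither your outline nor the paper supplies an argument for that beyond $n=2$.
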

\begin{prop}Conjecture \ref{cellconj} follows from Conjecture \ref{basisconj}.
\end{prop}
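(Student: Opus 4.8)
The plan is to deduce Conjecture~\ref{cellconj} from Conjecture~\ref{basisconj} by a dimension-count argument together with the general fact that the cellular structure on $H_n(1,0;q)$ descends to any quotient by a cellular (two-sided) ideal that is spanned by a sub-collection of cellular basis elements. First I would recall that, since $\{C^{\ull}_{\st,\tu}\}$ is a cellular basis for $H_n(1,0;q)$ with respect to the poset $(\Lambda(n),\trianglelefteq)$, the span of $\{C^{\ull}_{\st,\tu}\mid \ull \in \Lambda(n)\setminus\Lambda'(n)\}$ together with all $C^{\umu}_{\pp,\qq}$ for $\umu$ strictly dominating some such $\ull$ is precisely a two-sided ideal; more importantly, the kernel $K_n$ of the surjection $H_n(1,0;q)\ronto \R_n(q)$ is a two-sided ideal whose image is zero, so its dimension is $\dim H_n(1,0;q) - \dim \R_n(q)$.

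Second, I would use Conjecture~\ref{basisconj} to show that the images $\bar C^{\ull}_{\st,\tu}$ with $\ull\in\Lambda'(n)$ already span $\R_n(q)$: given any $\bar C^{\ull}_{\st,\tu}$ with $\ull\notin\Lambda'(n)$, Conjecture~\ref{basisconj} rewrites it (modulo nothing --- as an honest linear combination in $\R_n(q)$) in terms of $\bar C^{\umu}_{\pp,\qq}$ with $\umu\vartriangleright\ull$, and iterating this (the poset $\Lambda(n)$ is finite) we may assume every index lies in $\Lambda'(n)$, because at each stage $\Lambda(n)\setminus\Lambda'(n)$ has no maximal element sitting above only elements of $\Lambda'(n)$ --- or, more carefully, one runs the rewriting from the top of the poset down. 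Since the $\bar C^{\ull}_{\st,\tu}$, $\ull\in\Lambda'(n)$, are $\sum_{\ull\in\Lambda'(n)}|\T(\ull)|^2$ in number and this sum equals $\dim\R_n(q)=\sum_{k} k!\binom{n}{k}^2$ (using the bijection $\theta\vdash k \leftrightarrow (\theta,1^{n-k})$ and $|\T(\theta,1^{n-k})|=\binom{n}{k}f_\theta$, which matches $\dim M^{\theta,k}$), a spanning set of the right cardinality in a finite-dimensional space is automatically a basis.

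Third, with $\{\bar C^{\ull}_{\st,\tu}\mid \ull\in\Lambda'(n)\}$ established as a basis, I would verify the two cellular axioms $(C1)$ and $(C2)$ for $\R_n(q)$. Axiom $(C1)$: the anti-involution $*$ on $H_n(1,0;q)$ fixing $X,T_1,\dots,T_{n-1}$ descends to $\R_n(q)$ because the kernel $K_n$ is $*$-stable (it is generated by $y_{(\oo,2)}$, which is fixed by $*$, as noted in the remark following the proof of Theorem~\ref{solomon}), and by construction it sends $\bar C^{\ull}_{\st,\tu}$ to $\bar C^{\ull}_{\tu,\st}$. Axiom $(C2)$: for $a\in\R_n(q)$ lift to $\tilde a\in H_n(1,0;q)$; then $\tilde a\, C^{\ull}_{\st,\tu} \equiv \sum_{\st'} r_{\tilde a}(\st',\st) C^{\ull}_{\st',\tu} \pmod{H(>\ull)}$, and projecting to $\R_n(q)$ gives $a\,\bar C^{\ull}_{\st,\tu} \equiv \sum_{\st'} r_{\tilde a}(\st',\st)\bar C^{\ull}_{\st',\tu} \pmod{\R_n(q)(>\ull)}$, where $\R_n(q)(>\ull)$ is by definition spanned by $\bar C^{\umu}_{\pp,\qq}$ with $\umu\vartriangleright\ull$, $\umu\in\Lambda'(n)$ --- here one uses Conjecture~\ref{basisconj} again to absorb any $\bar C^{\umu}_{\pp,\qq}$ with $\umu\notin\Lambda'(n)$ appearing in the reduction into the span of strictly-larger $\Lambda'(n)$-indexed elements, noting that $\umu\vartriangleright\ull\in\Lambda'(n)$ forces the replacements to also strictly dominate $\ull$.

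The main obstacle is the last point of the previous paragraph, namely checking that the ``higher terms'' stay genuinely higher after one discards the non-$\Lambda'(n)$ indices: one must confirm that applying Conjecture~\ref{basisconj} to a term $\bar C^{\umu}_{\pp,\qq}$ with $\umu\vartriangleright\ull$ and $\umu\notin\Lambda'(n)$ produces only terms indexed by bipartitions that still strictly dominate $\ull$, so that the filtration of $\R_n(q)$ by $\{\R_n(q)(\trianglerighteq\nu)\}_{\nu\in\Lambda'(n)}$ is well-defined and the congruence in $(C2)$ is meaningful. This is a compatibility-of-partial-orders check (transitivity of $\trianglelefteq$ handles it once one knows the replacement indices dominate $\umu$, hence $\ull$), so it is not deep, but it is the place where the argument could go wrong if $\Lambda'(n)$ were not an order ideal-complement of the right shape; I would therefore isolate and prove the statement ``if $\ull\in\Lambda'(n)$ and $\bar C^{\umu}_{\pp,\qq}$ with $\umu\vartriangleright\ull$, $\umu\notin\Lambda'(n)$, is rewritten via Conjecture~\ref{basisconj}, all resulting indices $\unu$ satisfy $\unu\vartriangleright\ull$'' as an explicit lemma before assembling the proof.
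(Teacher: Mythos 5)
Your proposal is correct and follows essentially the same route as the paper: use Conjecture \ref{basisconj} iteratively (the poset $\Lambda(n)$ being finite) to show the $\Lambda'(n)$-indexed elements span, conclude they form a basis by a dimension count, observe that the anti-involution descends to give $(C1)$, and push the $(C2)$ congruence through the quotient, reabsorbing any non-$\Lambda'(n)$ terms into strictly higher ones. Your extra care about the order-compatibility of the reabsorption step (handled by transitivity of $\trianglelefteq$) is a point the paper passes over silently, but it does not change the argument.
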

\begin{proof}Suppose Conjecture \ref{basisconj} holds. Clearly the elements $\{ {\bar{C}^{\ull}_{\st,\tu}}|\ull\in\Lambda(n)\}$ span $R_n(q)$ because they are the image of a basis under a surjective map. By Conjecture \ref{basisconj}, consider an element ${\bar{C}^{\ull}_{\st,\tu}}$ with $\ull\in\Lambda(n)\setminus\Lambda'(n)$, then it is a linear combination of elements $ {\bar{C}^{\umu}_{\pp,\qq}}$ with $\ull\vartriangleleft\umu$. For each of those elements ${\bar{C}^{\umu}_{\pp,\qq}}$ with $\umu\in\Lambda(n)\setminus\Lambda'(n)$ we can do the same thing and express them as a linear combination of elements strictly higher in the partial order and so on. Since $\Lambda(n)$ is a finite set, this procedure terminates and hence every element of the set $\{ {\bar{C}^{\ull}_{\st,\tu}}|\ull\in\Lambda(n)\}$ is in the span of $\{ {\bar{C}^{\ull}_{\st,\tu}}|\ull\in\Lambda'(n)\}$.
This shows that the latter set spans $\R_n(q)$, hence by a dimension count it is a basis. 

Then, the anti-involution $*:H_n(1,0;q)\to H_n(1,0;q)$ fixing the generators descends to the anti-involution of $\R_n(q)$ of Remark \ref{antiinv}, so condition $(C1)$ in Definition \ref{cellular} is satisfied automatically. Finally, since condition $(C2)$ holds for the cellular basis of $H_n(1,0;q)$, it still holds in the quotient for the elements $\{ {\bar{C}^{\ull}_{\st,\tu}}|\ull\in\Lambda(n)\}$. By Conjecture \ref{basisconj}, we can then reexpress the RHS of $(C2)$ in terms of only elements of $\{ {\bar{C}^{\ull}_{\st,\tu}}|\ull\in\Lambda'(n)\}$, up to terms higher in the partial order. Therefore condition $(C2)$ holds in $\R_n(q)$ for the basis $\{ {\bar{C}^{\ull}_{\st,\tu}}|\ull\in\Lambda'(n)\}$ and Conjecture \ref{cellconj} follows.
\end{proof}
We can easily check Conjecture \ref{basisconj} for $\R_2(q)$.
\begin{thm}$\R_2(q)$ is a cellular algebra.
\end{thm}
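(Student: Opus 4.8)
The plan is to invoke the preceding proposition, which reduces the theorem to verifying Conjecture \ref{basisconj} for $n=2$: one must express every $\bar{C}^{\ull}_{\st,\tu}$ with $\ull\in\Lambda(2)\setminus\Lambda'(2)$ as a $\CC$-linear combination of the $\bar{C}^{\umu}_{\pp,\qq}$ with $\ull\vartriangleleft\umu$. First I would note that the five bipartitions of $2$ are $(2,\oo),(11,\oo),(1,1),(\oo,2),(\oo,11)$, and $\Lambda'(2)$ consists of those whose second component is a single column, so the only shape to treat is $(\oo,2)$. Since $(\oo,2)$ has a unique standard bitableau, there is a single cellular basis element to analyze, and the formulas of Definition \ref{cellular} give $u_{(\oo,2)}=1$ (the empty product, since $|\lambda^1|=0$) and $x_{(\oo,2)}=1+T_1$, hence $\bar{C}^{(\oo,2)}_{\st,\tu}=m_{(\oo,2)}=1+T_1\in\R_2(q)$.

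Next I would record which shapes lie strictly above $(\oo,2)$. Unwinding the dominance order, $(\oo,2)\trianglelefteq\umu$ reduces to the single inequality $|\mu^1|+\mu^2_1\geq 2$, which holds precisely for $\umu\in\{(2,\oo),(11,\oo),(1,1)\}$ and fails for $(\oo,11)$. So exactly these three shapes are permitted on the right-hand side, and all three lie in $\Lambda'(2)$.

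Then I would compute the cellular basis elements of those shapes in $\R_2(q)$ by pushing the formulas of Definition \ref{cellular} through the quotient map $X\mapsto e$ of Lemma \ref{newpres}. For shape $(1,1)$ one finds $m_{(1,1)}=X$, and the four elements $\bar{C}^{(1,1)}_{\pp,\qq}$, as $\pp,\qq$ range over the two standard bitableaux of shape $(1,1)$, are $e,\ eT_1,\ T_1e,\ T_1eT_1$; for $(11,\oo)$ the single element is $q^{-1}eT_1eT_1$; and for $(2,\oo)$ the single element is $q^{-1}(1+T_1)eT_1eT_1$. Using relation \eqref{nrel6} together with the quadratic relation, one simplifies
$$q^{-1}(1+T_1)eT_1eT_1 = eT_1eT_1 + eT_1e,$$
so that $eT_1e$ lies in the span of $\bar{C}^{(2,\oo)}_{\st,\tu}$ and $\bar{C}^{(11,\oo)}_{\st,\tu}$. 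Finally, relation \eqref{nrel7} rearranges to $1+T_1 = T_1eT_1 - eT_1e + T_1e + eT_1 + e - eT_1eT_1$, and substituting the identifications above yields
$$\bar{C}^{(\oo,2)}_{\st,\tu} = \Big(\sum_{\pp,\qq\in\T((1,1))}\bar{C}^{(1,1)}_{\pp,\qq}\Big) - \bar{C}^{(2,\oo)}_{\st,\tu},$$
a $\ZZ$-linear combination of cellular basis elements of shapes $(1,1)\vartriangleright(\oo,2)$ and $(2,\oo)\vartriangleright(\oo,2)$. Since this identity uses only the relations of Lemma \ref{newpres} and the inverse $q^{-1}$, it is valid for every $q\in\CC^{\times}$ — in particular at $q=-1$, where $\R_2(q)$ is not semisimple. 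This verifies Conjecture \ref{basisconj}, hence Conjecture \ref{cellconj}, for $n=2$.

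At $n=2$ there is no real obstacle: the computation is short precisely because there is one ``extra'' bipartition and one standard bitableau above it. The only mildly delicate step is recognizing that $eT_1e$, which appears after rearranging \eqref{nrel7} but is not literally one of the listed basis elements, nonetheless lies in their span — this is exactly what the simplification of $q^{-1}(1+T_1)eT_1eT_1$ supplies. For general $n$ the analogous straightening of every $\bar{C}^{\ull}_{\st,\tu}$ with $\lambda^2$ not a single column, modulo the ideal $A(>\ull)$, is the genuinely hard part that remains open as Conjecture \ref{basisconj}; the present computation should be read as a proof of concept.
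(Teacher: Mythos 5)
Your proposal is correct and follows essentially the same route as the paper: reduce to Conjecture \ref{basisconj} for the single offending shape $(\oo,2)$, list the cellular basis elements in $\R_2(q)$, and verify the identity $1+T_1=T_1eT_1+T_1e+eT_1+e-(1+T_1)L_1L_2$ expressing $\bar{C}^{(\oo,2)}_{\st,\tu}$ in terms of basis elements of the strictly larger shapes $(1,1)$ and $(2,\oo)$. The only difference is that you spell out the ``direct computation'' the paper leaves implicit, deriving the identity from relations \eqref{nrel6} and \eqref{nrel7} via the simplification $q^{-1}(1+T_1)eT_1eT_1=eT_1eT_1+eT_1e$, which is a welcome addition but not a different argument.
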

\begin{proof}For $n=2$, we have the five bipartitions of $2$ in order
$$(2,\oo)\vartriangleright (11,\oo)\vartriangleright (1,1)\vartriangleright (\oo, 2)\vartriangleright (\oo,11).$$
For four of those, there is only one standard bitableau of the given shape $\ull$, which is the superstandard one $\ttl$. The exception is $(1,1)$ for which we have the two bitableaux
$$\mathfrak{t}^{(1,1)}= \left(\young(1),\young(2)\right);\quad\mathfrak{s}^{(1,1)}=\left(\young(2),\young(1)\right).$$
We have 
\begin{center}
\begin{tabular}{|c|c|c|}\hline
$\ull$ & $x_{\ull}$ & $u_{\ull}$ \\ \hline
$(2,\oo)$ & $1+T_1$ & $L_1L_2$ \\ \hline
$(11,\oo)$ & $1$ & $L_1L_2$ \\ \hline
$(1,1)$ & $1$ & $L_1$ \\ \hline
$(\oo,2)$ & $1+T_1$ & $1$ \\ \hline
$(\oo,11)$ & $1$ & $1$ \\ \hline
\end{tabular}
\end{center}
The cellular basis of $H_n(1,0;q)$ is then given by
\begin{align*} C^{(2,\oo)}_{\mathfrak{t},\mathfrak{t}}&= (1+T_1)L_1L_2 \\
C^{(11,\oo)}_{\mathfrak{t},\mathfrak{t}}&= L_1L_2 \\
C^{(1,1)}_{\mathfrak{t},\mathfrak{t}}&= L_1 \\
C^{(1,1)}_{\mathfrak{t},\mathfrak{s}}&= L_1T_1 \\
C^{(1,1)}_{\mathfrak{s},\mathfrak{t}}&= T_1L_1 \\
C^{(1,1)}_{\mathfrak{s},\mathfrak{s}}&= T_1L_1T_1 \\
C^{(\oo,2)}_{\mathfrak{t},\mathfrak{t}}&= 1+T_1 \\
C^{(\oo,11)}_{\mathfrak{t},\mathfrak{t}}&= 1.
\end{align*}
A direct computation shows that in $\R_2(q)$, where $L_1=e$ and $L_1L_2=q^{-1}T_1eT_1e$, we have
$$ 1+T_1=T_1L_1T_1+T_1L_1+L_1T_1+L_1-(1+T_1)L_1L_2 $$
hence
$$\bar{C}^{(\oo,2)}_{\mathfrak{t},\mathfrak{t}}=\bar{C}^{(1,1)}_{\mathfrak{s},\mathfrak{s}}+
\bar{C}^{(1,1)}_{\mathfrak{s},\mathfrak{t}}+\bar{C}^{(1,1)}_{\mathfrak{t},\mathfrak{s}}+
\bar{C}^{(1,1)}_{\mathfrak{t},\mathfrak{t}}-\bar{C}^{(2,\oo)}_{\mathfrak{t},\mathfrak{t}}.$$
So indeed, since $(\oo,2)$ is the only partition not in $\Lambda'$ and since $(\oo,2)\vartriangleleft (1,1)$, $(\oo,2)\vartriangleleft (2,\oo)$ we have that Conjecture \ref{basisconj} holds for $\R_2(q)$, hence by Conjecture \ref{cellconj} the theorem follows.
\end{proof}
To prove Conjecture \ref{basisconj} in the case of $n\geq 3$, it is impractical to check directly that it holds for all the elements $\bar{C}^{\ull}_{\st,\tu}$, $\ull\in\Lambda(n)\setminus\Lambda'(n)$, like we did in the case of $n=2$. However, we can greatly reduce the number of basis elements that we need to check.
\begin{prop}For a fixed bipartition $\ull\in\Lambda(n)\setminus\Lambda'(n)$, if Conjecture \ref{basisconj} holds for $m_{\ull}=\bar{C}^{\ull}_{\ttl,\ttl}$, where $\ttl$ is the superstandard tableau, then it holds for all $\bar{C}^{\ull}_{\st,\tu}$, $\st,\tu\in M(\ull)$.
\end{prop}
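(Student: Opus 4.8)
The plan is to deduce the general case from the single identity for $m_{\ull}=\bar{C}^{\ull}_{\ttl,\ttl}$ by a purely formal argument, using that the cellular axioms $(C2)$ and $(C2')$ hold in $H_n(1,0;q)$ and survive passage to the quotient $\R_n(q)$.

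First I would recall from \cite[Thm 3.26]{DJM} that $\bar{C}^{\ull}_{\st,\tu}=(T_{d(\st)})^*\,m_{\ull}\,T_{d(\tu)}$, and use the hypothesis to write $m_{\ull}=\sum c_{\umu,\pp,\qq}\,\bar{C}^{\umu}_{\pp,\qq}$ as a $\CC$-linear combination over $\umu\vartriangleright\ull$ and $\pp,\qq\in M(\umu)$. Substituting, $\bar{C}^{\ull}_{\st,\tu}$ becomes a $\CC$-linear combination of the elements $(T_{d(\st)})^*\,\bar{C}^{\umu}_{\pp,\qq}\,T_{d(\tu)}$ with $\umu\vartriangleright\ull$. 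So it suffices to check that each such element is a $\CC$-linear combination of cell basis images $\bar{C}^{\unu}_{\pp',\qq'}$ with $\unu\trianglerighteq\umu$, since then $\unu\vartriangleright\ull$ automatically.

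Next I would argue one cell layer at a time upstairs in $H_n(1,0;q)$. Applying $(C2)$ with $a=(T_{d(\st)})^*$ gives
\begin{equation*}
(T_{d(\st)})^*\,C^{\umu}_{\pp,\qq}\equiv\sum_{\pp'\in M(\umu)}r_a(\pp',\pp)\,C^{\umu}_{\pp',\qq}\pmod{H_n(1,0;q)(>\umu)};
\end{equation*}
then applying $(C2')$ to right multiplication by $T_{d(\tu)}$, and using that $H_n(1,0;q)(>\umu)$ is a two-sided ideal so the error terms stay inside it, yields
\begin{equation*}
(T_{d(\st)})^*\,C^{\umu}_{\pp,\qq}\,T_{d(\tu)}\equiv\sum_{\pp',\qq'}s_{\pp',\qq'}\,C^{\umu}_{\pp',\qq'}\pmod{H_n(1,0;q)(>\umu)}
\end{equation*}
for suitable scalars $s_{\pp',\qq'}$. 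Now project to $\R_n(q)$: since $H_n(1,0;q)(>\umu)$ is spanned by the cell basis elements indexed by $\unu\vartriangleright\umu$, its image is spanned by the corresponding $\bar{C}^{\unu}_{\pp',\qq'}$, and therefore $(T_{d(\st)})^*\,\bar{C}^{\umu}_{\pp,\qq}\,T_{d(\tu)}$ is a $\CC$-linear combination of the $\bar{C}^{\umu}_{\pp',\qq'}$ and of finitely many $\bar{C}^{\unu}_{\pp',\qq'}$ with $\unu\vartriangleright\umu$. Every index appearing is $\trianglerighteq\umu\vartriangleright\ull$, which combined with the first paragraph gives Conjecture \ref{basisconj} for $\bar{C}^{\ull}_{\st,\tu}$.

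I do not expect a genuine obstacle here: the content of the proposition is exactly that the cell filtration on $H_n(1,0;q)$ is a filtration by two-sided ideals compatible with the surjection $H_n(1,0;q)\ronto\R_n(q)$, so that left and right multiplication never decreases the cell label modulo higher terms. The only point to be careful about is verifying that the $(C2)$-error term, after being multiplied on the right by $T_{d(\tu)}$, still lies in $H_n(1,0;q)(>\umu)$, which is immediate from that submodule being a two-sided ideal. The payoff is the stated reduction: to establish Conjecture \ref{basisconj} for $\R_n(q)$ one only needs the finitely many ``bottom-corner'' identities expressing $m_{\ull}$ in terms of strictly higher cell layers, for $\ull$ ranging over $\Lambda(n)\setminus\Lambda'(n)$.
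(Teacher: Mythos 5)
Your proposal is correct and follows essentially the same route as the paper: substitute the hypothesized expansion of $m_{\ull}$ into $\bar{C}^{\ull}_{\st,\tu}=(T_{d(\st)})^*m_{\ull}T_{d(\tu)}$, then use $(C2)$ and $(C2')$ to see that left and right multiplication can only produce cell elements indexed by bipartitions $\trianglerighteq\umu\vartriangleright\ull$. Your extra remark that the error terms stay in the two-sided ideal $H_n(1,0;q)(>\umu)$ is the same point the paper makes implicitly, just stated more explicitly.
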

\begin{proof}Suppose 
$$\bar{C}^{\ull}_{\ttl,\ttl}=m_{\ull}=\sum_{{\ull}\triangleleft{\umu}}\gamma_{\pp,\qq} \bar{C}^{\umu}_{\pp,\qq};$$
then for any $a\in\R_n(q)$ we have
$$a m_{\ull}=\sum_{{\ull}\triangleleft{\umu}}\gamma_{\pp,\qq} a\bar{C}^{\umu}_{\pp,\qq}.$$
But we know from the condition $(C2)$ in $H(1,0;q)$ that each $ a\bar{C}^{\umu}_{\pp,\qq}$ is a linear combination of elements $\bar{C}^{\unu}_{\mathfrak{r},\mathfrak{n}}$ with $\umu\trianglelefteq\unu$, hence the same is true for $a m_{\ull}$. We can then apply condition $(C2')$ to $a m_{\ull} b$ to obtain that it has to be a linear combination of elements as required.

To conclude we just need to remember that, by definition, $ C^{\ull}_{\st,\tu}:= \left(T_{d(\st)}\right)^*m_{\ull}T_{d(\tu)}$.
\end{proof}
Using the following simple observation, we can then aim to set up an inductive argument.
\begin{lem}\label{remm}If $\ull=(\lambda^1,\lambda^2)$ is a bipartition of $n$ and $\ull'=(\lambda^1,(\lambda^2,1))$ is the bipartition of $n+1$ obtained by adding a $1$ at the end of $\lambda^2$ then, under the inclusion $H_n(1,0;q)\rinto H_{n+1}(1,0;q)$, we have
$$m_{\ull}\mapsto m_{\ull'}.$$
\end{lem}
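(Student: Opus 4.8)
The plan is to unwind the definitions of $x_{\ull}$, $u_{\ull}$ and $m_{\ull}$ and observe that each of the three ingredients is left untouched when we pass from $\ull$ to $\ull'$, under the standard inclusion $H_n(1,0;q)\rinto H_{n+1}(1,0;q)$. Recall that this inclusion sends $X\mapsto X$ and $T_i\mapsto T_i$ for $i\leq n-1$, hence $T_w\mapsto T_w$ for every $w$ in the parabolic subgroup $S_n\leq S_{n+1}$ fixing $n+1$; so it suffices to check the equalities at the level of the defining formulas.

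First I would treat the factor $u_{\ull}=\prod_{i=1}^{|\lambda^1|}\tilde{L}_i$. The first component $\lambda^1$ is the same for $\ull$ and $\ull'$, so $|\lambda^1|$ is unchanged; and each $\tilde{L}_i=\q^{1-i}T_{i-1}\cdots T_1 X T_1\cdots T_{i-1}$ with $i\leq|\lambda^1|\leq n$ involves only $X$ and $T_1,\ldots,T_{i-1}$, so it lies in the image of $H_n(1,0;q)$ and maps to the element defined by the same formula in $H_{n+1}(1,0;q)$. Therefore $u_{\ull}\mapsto u_{\ull'}$.

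Next I would treat $x_{\ull}=\sum_{w\in S_{\ull}}T_w$, where $S_{\ull}=S_{\lambda^1}\times S_{\lambda^2}$ is the row stabilizer of the superstandard bitableau $\ttl$. When forming $\mathfrak{t}^{\ull'}$, the first tableau and the rows of the second tableau coming from $\lambda^2$ are filled with $1,\ldots,n$ exactly as in $\ttl$, while the extra part $1$ of $(\lambda^2,1)$ is a single box filled with $n+1$. A row of length $1$ contributes only the trivial group to the row stabilizer, so $S_{\ull'}=S_{\lambda^1}\times S_{(\lambda^2,1)}=S_{\lambda^1}\times S_{\lambda^2}=S_{\ull}$ as subgroups of $S_{n+1}$, the identification being via $S_n\leq S_{n+1}$. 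Hence $x_{\ull'}=\sum_{w\in S_{\ull'}}T_w=\sum_{w\in S_{\ull}}T_w=x_{\ull}$ in $H_{n+1}(1,0;q)$.

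Combining the two observations gives $m_{\ull'}=x_{\ull'}u_{\ull'}=x_{\ull}u_{\ull}=m_{\ull}$, as claimed. There is essentially no genuine obstacle: the only point requiring a moment's care is the bookkeeping of the superstandard filling, namely verifying that appending a part $1$ to $\lambda^2$ neither changes the index set $\{1,\ldots,|\lambda^1|\}$ appearing in $u_{\ull}$ nor enlarges the row stabilizer $S_{\ull}$.
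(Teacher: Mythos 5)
Your proof is correct and follows essentially the same route as the paper: the paper likewise observes that $u_{\ull}$ depends only on $\lambda^1$ and that appending a part of size $1$ to $\lambda^2$ only adds a trivial factor $S_1$ to the Young subgroup, so $x_{\ull}=x_{\ull'}$. Your version simply spells out the bookkeeping in slightly more detail.
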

\begin{proof}The Young subgroup $S_{\ull}=S_{\lambda^1}\times S_{\lambda^2}$, under the inclusion $S_n\rinto S_{n+1}$ is mapped to $S_{\lambda^1}\times S_{\lambda^2}\times S_1=S_{\ull'}\simeq S_{\lambda^1}\times S_{\lambda^2}$. Hence $x_{\ull}=x_{\ull'}$. By its definition, the term $u_{\ull}$ only depends on $\lambda^1$, so it is also identified with $u_{\ull'}$.
\end{proof}
We have a map
$$\varphi:\Lambda(n)\setminus\Lambda'(n)\to \Lambda(n+1)\setminus\Lambda'(n+1)$$
given by
$$(\lambda^1,\lambda^2)\mapsto (\lambda^1,(\lambda^2,1)).$$
By Lemma \ref{remm}, if Conjecture \ref{basisconj} holds for a bipartition $\ull\in\Lambda(n)\setminus\Lambda'(n)$, then it holds for $\varphi(\ull)$. We can then hope to build up to a proof of the conjecture for all $n$ starting from the case $n=2$.
Of course it is not that easy because the map $\varphi$ is very far from being surjective, but at least it gives us a starting point.
For example for $n=3$, we have that $\Lambda(3)\setminus\Lambda'(3)=\{(1,2),(\oo,3),(\oo,21)\}$. Of those three cases, we have $(\oo,21)=\varphi(\oo,2)$, but to prove that $\R_3(q)$ is cellular we would still need to find appropriate expressions for $m_{(1,2)}$ and $m_{(\oo,3)}$ as linear combinations.
\begin{rem}Travkin, in his paper \cite{T}, has defined a special basis of $\R_n(q)$ using its interpretation as a convolution algebra as in the beginning of Section \ref{mirha}. The definition uses the geometry of the $G$-orbits on $G/B\times G/B\times V$. Travkin calls it the Kazhdan-Lusztig basis of $\R_n(q)$ and, by definition, it is invariant under a KL-involution of $\R_n(q)$, and can be used to define left, right and two-sided KL cells. In \cite{T}, it is conjectured that these cells are related to the mirabolic RSK correspondence that is defined in the paper. The problem with that construction, which makes it not applicable to the conjectures of this section, is that it only uses the $\HH_n(q)$-bimodule structure of $\R_n(q)$ and not its algebra structure. In fact the KL-involution defined by Travkin is not an algebra involution.
Finding a cellular basis for $\R_n(q)$ (and possibly relating it to the mirabolic RSK correspondence) could then also be interpreted as a way of restating and answering Travkin's conjecture.
\end{rem}

\end{document}